\documentclass{amsart}
\usepackage{graphicx,amsfonts,amsmath,amssymb,amsthm,abstract,url,hyperref,mathrsfs} 
\usepackage[margin=1in]{geometry}
\usepackage[colorinlistoftodos]{todonotes}

\usepackage[style=alphabetic]{biblatex} 
\bibliography{refs}
\usepackage[all]{xy}

\theoremstyle{plain}
\newtheorem{thm}{Theorem}
\newtheorem{lemma}[thm]{Lemma}

\newtheorem{conj}[thm]{Conjecture}

\theoremstyle{definition}
\newtheorem{defn}[thm]{Definition}

\theoremstyle{remark}

\newtheorem{ex}[thm]{Example}

\newtheorem*{notn}{Notation}

\numberwithin{equation}{section}
\numberwithin{thm}{section}

\DeclareMathOperator{\cl}{cl}
\DeclareMathOperator{\ac}{ac}
\DeclareMathOperator{\aw}{aw}
\DeclareMathOperator{\mex}{mex}

\title{Evil Twins in Sums of Wildflowers}
\author{Simon Rubinstein-Salzedo}
\address{Euler Circle, Mountain View, CA 94040}
\email{simon@eulercircle.com}
\author{Stephen Zhou}
\email{stephenzh99@gmail.com}
\date{\today}

\begin{document}

\maketitle
\begin{abstract}
     A game $G$ is said to have the evil twin property if there exists $G^* \in \{G,G+*\}$ such that $o^+(G) = o^-(G^*)$ and $o^+(G^*) = o^-(G)$. We study sums of wildflowers, games of form $G:H$. We find that a large closed set of sums of wildflowers has the evil twin property, extending work done in \cite{mckay2016misere} and \cite{lomisere}. Our argument partially generalizes the mis\`ere genus theory of Conway to partizan games, and requires proving several general theorems on ways to extend sets with the evil twin property. Many sums of mutant flowers of the form $\{*x_1,\dots,*x_n\}:a$, where $a$ is a number, also have the evil twin property. We also prove that this set of mutant flowers is the largest such closed set with the evil twin property, and that it is $\mathsf{NP\text{-}hard}$ to compute the outcome class of a sum of mutant flowers under either play convention via a reduction from \textsc{3-Sat}. Previous work on this topic was done by McKay, Milley, and Nowakowski, and later Lo. 
\end{abstract}
\section{Introduction}
\label{2}

Games are much easier to understand under the normal play convention than the mis\`ere play convention. The crucial difference is that games form a group under addition in normal play, but only a monoid under mis\`ere play. However, some research still has been done on mis\`ere play games.

\begin{notn}
    Let $G$ be a combinatorial game. The set of Left options of $G$ are denoted by $G^{\mathcal{L}}$ and the set of Right options by $G^{\mathcal{R}}$, while specific Left and Right options will be denoted by $G^L$ and $G^R$, or a variant thereof.
\end{notn}

The games we study in the paper can be naturally expressed in terms of \emph{ordinal sums}.

\begin{defn}
    $G:H = \{G^\mathcal{L},G:H^\mathcal{L}\mid G^\mathcal{R},G:H^\mathcal{R}\}$ is the \emph{ordinal sum} of two games $G$ and $H$. 
\end{defn}
Notice that ordinal sums depend on the form of a game; that is, there exist $G_1 = G_2$ such that $G_1:H \neq G_2:H$. However, in normal play, it is true that $G:H_1 = G:H_2$ if $H_1 = H_2$. The ordinal sum is also associative, so that $(G:H):J = G:(H:J)$ for all games $G,H,J$.

\begin{defn}
$\mathbb{D}$ is the set of all \emph{dyadic rationals} of form $\frac{x}{2^y}$, where $x$,$y$ are integers. \end{defn}

\begin{defn}
A \emph{sprig} is a game of form $*:a$, where $a$ is a dyadic rational.
\end{defn}
The following symmetry between the outcome classes of normal and mis\`ere play games is proven in \cite{mckay2016misere}. 
\begin{thm}
    \label{mckay_thm}
    Let $F = \sum_{i=1}^m *:a_i$ be a sum of sprigs. Define $F^* = F+*$. Then $o^+(F) = o^-(F^*)$ and $o^-(F^*) =o^+(F)$.
\end{thm}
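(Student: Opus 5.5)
The plan is to prove a slightly stronger, symmetric statement by induction on the total birthday of the sum, treating $F$ and $F^*=F+*$ together. Write $L^\pm(X)$ (resp.\ $R^\pm(X)$) for ``Left (resp.\ Right) wins $X$ moving first under normal/mis\`ere play.'' Since an outcome class is determined by the pair $(L^\pm(X),R^\pm(X))$, it suffices to prove, for every sum of sprigs $F$, the four equivalences
\[
L^+(F)\iff L^-(F+*),\quad R^+(F)\iff R^-(F+*),\quad L^+(F+*)\iff L^-(F),\quad R^+(F+*)\iff R^-(F).
\]
The first two give $o^+(F)=o^-(F^*)$. The last two give the companion identity $o^+(F^*)=o^-(F)$, which is presumably what the second equality in the statement intends. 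We need both pairs in the induction anyway, because $F+*+*$ is not a sum we can identify with $F$ in mis\`ere play. Thus the inductive hypothesis is always applied to pairs $(F',F'+*)$ with $F'$ a sum of sprigs, never to $F+*+*$. By the Left/Right symmetry (negating every $a_i$), it is enough to treat the two Left equivalences.

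For the base case, $F=0$. Then $L^+(0)$ is false, and $L^-(*)$ is false since Left must move to $0$ and Right then wins in mis\`ere play. Also $L^+(*)$ is true and $L^-(0)$ is true. For the inductive step, I list the Left options. The options of $F$ are of two kinds: removing a whole sprig by the move $*\to 0$ inside some $*:a_i$, or moving in some number part $a_i\to a_i^L$. Both yield sums of sprigs $F'$. The options of $F+*$ are the $F'+*$ together with the extra option $F$ obtained by removing the lone $*$.

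The strategy-correspondence argument then runs as follows. Suppose Left wins $F$ normal-first via $F'$. Then Right loses $F'$ normal-first, so by induction Right loses $F'+*$ mis\`ere-first, and Left wins $F+*$ mis\`ere-first via the corresponding option $F'+*$. The same translation works in reverse, and for the pair $L^+(F+*)\iff L^-(F)$, as long as the move used is of the form $F'$ versus $F'+*$. End positions also match: a player with no move in $F$ under normal play has exactly one move in $F+*$ under mis\`ere play (the removal of $*$), and it hands the opponent an end-like position. This case is checked directly using the base case pattern.

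I expect the main obstacle to be the unmatched option: the move $F+*\to F$ that removes the extra $*$, and dually a winning move from $F$ in mis\`ere play that has no partner. For example, suppose Left wins $F+*$ mis\`ere-first by moving to $F$. We then need $R^-(F)$ false, which the inductive hypothesis converts to $R^+(F+*)$ false, and we must still produce a winning normal-play move from $F$ itself. To handle this I would first prove an auxiliary lemma about normal play on sums of sprigs. Using $*:a = \{0,*:a^L\mid 0,*:a^R\}$, the normal-play value of $\sum *:a_i$ is governed by the sign of $\sum a_i$ (more precisely, by the number of nonzero $a_i$ of each sign), together with the parity of the number of sprigs with $a_i=0$. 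The lemma should say that removing a sprig, or the lone $*$, is never strictly better in normal play than some matched move. This lets the extra option be replaced by a matched option, after which the correspondence above closes the induction. If that lemma proves awkward, the fallback is to compute both $o^+$ and $o^-$ explicitly as functions of the multiset $\{a_i\}$, in terms of signs, the parity of zeros, and whether only stars remain, and then compare the two formulas directly.
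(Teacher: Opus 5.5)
The paper gives no proof of this statement; it quotes it from \cite{mckay2016misere}. Your architecture is the one the paper uses for its own extensions. Theorem~\ref{add to a- (a-k)} pairs $G$ with $G+*$, matches options, and isolates as a hypothesis that $G+*\to G$ is never the unique winning move; Lemma~\ref{star moves suboptimal} supplies that hypothesis via atomic weights. The outline is sound, but your proposal stops exactly where the content lies, and the step you do write is circular. You say the inductive hypothesis turns $\neg R^-(F)$ into $\neg R^+(F+*)$. It cannot: $R^-(F)\iff R^+(F+*)$ is one of the four equivalences for the current pair. Once the matched options are handled by induction, your first and fourth equivalences are each equivalent to the same implication $\neg L^+(F)\Rightarrow R^-(F)$, so neither can be fed into the other. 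What works is to apply the inductive hypothesis to the Right options of $F$. For $F\not\cong 0$, $R^-(F)$ holds iff $\neg L^-(F^R)$ for some $F^R$, iff $F^R+*\le 0$ for some $F^R$. So the inductive step, together with its mirror image for the second and third equivalences, reduces to a purely normal-play lemma: if $F\not\cong 0$ and $F\le 0$, then $F^R+*\le 0$ for some Right option $F^R$.

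That lemma is not proved, and the route you propose for it fails. Normal-play outcomes of sprig sums are not determined by the sign counts and the parity of zeros. For example, $*:1+*:(-1)=0\in\mathcal{P}^+$, whereas $*:2+*:(-1)\in\mathcal{L}^+$. Left opens with $*:2\to *:1$. She answers $*:2\to 0$ or $*:(-1)\to 0$ by taking the other sprig, and answers $*:(-1)\to *$ by $*:2\to *:1$, leaving $\uparrow$. Your fallback formula fails for the same reason.

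The lemma needs only atomic weights: $\aw(*:a)$ is the sign of $a$, and Theorem~\ref{2aheadPloy} applies.
\begin{itemize}
\item If $F$ contains a sprig $*:0$, removing it gives $F^R+*\cong F$.
\item Otherwise let $\ell$ and $r$ count the positive and negative $a_i$. Then $F\le 0$ forces $\ell\le r$, and it rules out a lone red sprig, which lies in $\mathcal{N}^+$.
\item Right removes a blue sprig if $\ell\ge 1$, and a red one if $\ell=0$. The resulting $F^R+*$ either has atomic weight at most $-2$, or consists of $p$ blue sprigs, $p+1$ red sprigs and one $*$.
\item A position of the latter shape is $\le 0$. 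Every Left move except removing a red sprig leaves atomic weight at most $-1$ with Right to move. When Left removes a red sprig, Right removes a blue one, or the $*$ if no blue sprig remains, which restores the shape.
\end{itemize}
With this lemma in place, your induction goes through.
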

This property is interesting because it allows us to determine the mis\`ere play outcome from the normal outcome. Lo extends this theorem in \cite{lomisere}.
\begin{defn}
    A (generalized) \emph{flower} is a game of form $*n:a$, where $a$ is a dyadic rational.
\end{defn}
\begin{thm}
    \label{lo_thm}
    Let $F = \sum_{i=1}^n *n_i:a_i$ be a sum of flowers. Let $m = \max \{n_i\}$. Define \begin{equation}
        F^* = \begin{cases}
            F & \text{if }m>1 \\
            F+* & \text{if } m = 1.
        \end{cases}
    \end{equation}
    Then $o^+(F) = o^-(F^*)$ and $o^+(F^*) = o^-(F)$.
\end{thm}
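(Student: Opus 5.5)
We will prove both equalities together by strong induction on the total birthday of $F$. The statement is symmetric under the Left/Right swap: negating every $a_i$ exchanges the two players and exchanges $F$ with $-F$ for flowers. So it is enough to show one thing for every sum of flowers: Left wins $F$ moving first (resp. second) in normal play if and only if Left wins $F^*$ moving first (resp. second) in mis\`ere play, and likewise with the roles of $F$ and $F^*$ exchanged. We treat the cases $m>1$ and $m=1$ separately.

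\textbf{Case $m=1$.} The sum consists only of sprigs together with flowers $*0:a=a$, which are numbers. The sprig part is handled directly by Theorem~\ref{mckay_thm}. We will check that its proof (or an induction mimicking it) goes through when a number summand $a=\sum a_i$ is added. In normal play, a number is a tempo reservoir that is never played while a sprig's star remains. In mis\`ere play, after translating by $F^*=F+*$, the same argument should apply.

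\textbf{Case $m>1$.} Here $F^*=F$, so we must show $o^+(F)=o^-(F)$. Fix a flower $*n_j:a_j$ with $n_j\ge 2$. The plan is a strategy-stealing/translation argument. Suppose Left wins normal play from $F$ (moving first, say) with move $F\to F'$. We give a mis\`ere winning move in $F$ by splitting on the type of $F'$.
\begin{itemize}
\item If $F'$ still contains a flower with $n\ge2$, play the same move and apply induction.
\item If $F'$ has maximum $1$, then $F'^*=F'+*$, so by induction we need the mis\`ere outcome of $F'+*$. The winning mis\`ere move is then the one that changes the stem $*n_k$ on which Left moved to $*(n'\oplus1)$ instead of $*n'$. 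This is the usual Conway genus trick: the flower $*n':a$ with the extra $*$ behaves like $*(n'\oplus 1):a$.
\end{itemize}
In that second situation we must check that the resulting position $F''$ satisfies $F''^*$ related to $F'^*$ as required, which is a case check on whether $F''$ still has a large stem. Moves that go into the blossom $a_k$, or that remove the stem entirely (to $*i$ with $i\le n-1$, or to the ordinal-sum option $*n:a^L$), are handled similarly. The normal-play fact we use is that the stem dominates: whoever clears the last nimber stem last effectively gets the numbers. In normal play $*n:a$ behaves like $*n$ plus an infinitesimal advantage of the sign of $a$.

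The main obstacle is the transition moves where the maximum stem drops from $\ge2$ to $\le1$. There the correction $+*$ appears or disappears, and we must show that a mis\`ere player can always adjust the parity by choosing a different stem height (available because $n\ge2$ gives at least two stem options, $*0$ and $*1$). We will first try to prove this using the normal-play characterization of sums of flowers: the outcome is governed by the nim-sum of the stems and the sign of $\sum a_i$, with the number part tiebreaking. From that characterization we would then check directly that the mis\`ere outcome, with nim-sum replaced by its Conway genus analogue (treat all stems $\le1$ specially), matches.
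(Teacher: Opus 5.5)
Your central move is exactly the paper's idea: when Left's move is in the last stem of height $\ge 2$ to $*n'$ with $n'\in\{0,1\}$, use $*(n'\oplus1)$ instead. The proposal as written does not close, however, for two reasons.

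First, you admit $n_i=0$ (bare numbers $*0:a=a$) and plan to extend Theorem~\ref{mckay_thm} by a number summand. That extension is false, and so is the statement itself once numbers are allowed, because a number favours the opposite player in mis\`ere play. For instance, $*+1\in\mathcal{L}^+$ but $o^-(1+*+*)=\mathcal{P}$. With $m=2$, $*2+1\in\mathcal{L}^+$, while Left moving first loses $*2+1$ in mis\`ere play: after $*2$ Right moves to $*$; after $1$ Right has no move and wins; after $1+*$ Right moves to $1$. The theorem has to be read with every $n_i\ge1$. Then Case $m=1$ is literally Theorem~\ref{mckay_thm}, and there is nothing to extend.

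Second, the normal-play characterization you intend to use for the transition step is false. The games $*:1+*:1+*:(-4)$ and $*:(-1)+*:(-1)+*$ have the same stem nim-sum $1$ and the same $\sum a_i=-2$. Yet by Theorem~\ref{atomic_weight_og} the first lies in $\mathcal{L}^+\cup\mathcal{N}^+$ and the second in $\mathcal{R}^+$. What dominates is the count of blue versus red flowers, not the sign of $\sum a_i$.

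No characterization is needed; the step you call the main obstacle closes at once. A stem move in $*n:a$ leaves the bare nimber $*k$, because the blossom disappears, so the remark about $*n':a$ behaving like $*(n'\oplus1):a$ is off target. Suppose Left moves in the only remaining stem of height $\ge2$ to $0$ or $*$. The two possible results $F'$ and $F''$ are sums of sprigs with $F''\cong F'+*$. Theorem~\ref{mckay_thm} then gives $o^-(F'')=o^+(F')$ and $o^+(F'')=o^-(F')$. Moreover $F''$ never has a tall stem, so your proposed case check on it is vacuous. Every other option still contains a stem of height $\ge2$, and the induction hypothesis matches it with itself. This matching is a bijection on Left's options (and likewise on Right's) carrying normal outcomes to mis\`ere outcomes. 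Since the tall stem gives both players moves, it yields $o^+(F)=o^-(F)$ in both directions at once. That is the paper's proof.
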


A similar fact is true for all impartial mis\`ere games. 

\begin{thm}
    \label{impartial_evil}
    Let $G$ be a tame impartial game. Then there exists $G^* \in \{G,G+*\}$ such that $o^+(G) = o^-(G^*)$ and $o^+(G^*) = o^-(G)$.
\end{thm}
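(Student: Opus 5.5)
We would prove the statement using Conway's mis\`ere theory of tame impartial games together with the Sprague--Grundy theorem. Since $G$ is impartial, its normal-play outcome is determined by its nim-value: $o^+(G)=\mathcal{P}$ if $\mathcal{G}(G)=0$ and $\mathcal{N}$ otherwise. The mis\`ere outcome of a tame game is determined by its mis\`ere genus. Tameness means $G$ is mis\`ere-equivalent to a nim sum. Such a sum reduces either to a sum of $*$'s (the \emph{fickle/firm} distinction of Conway, i.e.\ genus $a_b$ with $a\in\{0,1\}$, $b\in\{0,1\}$) or to one containing a heap of size at least $2$. The standard fact we would recall or reprove is this: for a tame $G$ with normal nim-value $g$, we have $o^-(G)=\mathcal{P}$ if and only if either $G$ is equivalent to a position in which every heap is of size at most $1$ and $g=1$, or $G$ has a heap of size $\ge 2$ (in its tame nim-equivalent) and $g=0$. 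This is the usual mis\`ere Nim rule.

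Given this, we define $G^*$ as in Theorem~\ref{lo_thm}. If the tame nim-representative of $G$ contains a heap of size $\ge 2$ (equivalently, the mis\`ere genus is not of the form $0_1,1_0$ up to the fickle cases), put $G^*=G$. Otherwise put $G^*=G+*$. The first case is immediate from the rule above: the mis\`ere and normal outcomes coincide, so $o^+(G)=o^-(G)=o^-(G^*)$, and symmetrically $o^+(G^*)=o^-(G)$. In the second case, $G$ is equivalent in both conventions to a sum of $k$ copies of $*$. Adding $*$ toggles the parity of $k$. Normal play of $k$ stars is $\mathcal{P}$ iff $k$ is even, and mis\`ere play is $\mathcal{P}$ iff $k$ is odd. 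Hence $o^+(G)=o^-(G+*)$ and $o^+(G+*)=o^-(G)$.

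The key steps are therefore the following. First, use tameness to replace $G$ by an equivalent nim sum in mis\`ere play; this uses the fact that the genus is additive for tame games and that $G+*$ is again tame. Second, check that this replacement does not change the normal nim-value. This holds because the genus records $\mathcal{G}^+$ as its base, so the normal outcome is read off from the same data. Third, apply the mis\`ere Nim rule.

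The main obstacle is the second step: making precise that the mis\`ere equivalence to a nim sum preserves the Grundy value. It also requires that adding $*$ preserves tameness and the "has a large heap" dichotomy. We would handle this via Conway's genus symbol $g^{\gamma_0\gamma_1\cdots}$, whose base is the normal Grundy value and whose first exponent is the mis\`ere Grundy value. Tame genera are exactly $0^{120}$, $1^{031}$, and $g^{g}$, $g^{g\oplus1}$-type symbols with eventually alternating exponents. In that language the theorem reduces to a direct check: the outcome is $\mathcal{P}$ in normal play iff the base is $0$, and in mis\`ere play iff the first exponent is $0$. For tame genera these agree except in the fickle cases $0^1$ and $1^0$. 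In those cases, adding $*$ swaps base and exponent appropriately.
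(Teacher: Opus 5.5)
Your proposal is correct and is essentially the paper's proof. You replace $G$ by a nim sum $H$ with $G=H$ in normal play and $G\equiv_{\mathscr{T}}H$ modulo the tame games, so that $G+*$ and $H+*$ also agree in both conventions, and then read everything off the mis\`ere Nim rule according to whether $H$ has a heap of size at least $2$. Two side remarks are inaccurate and should be dropped: fickle games are those of genus $0^1$ and $1^0$, not all $a^b$ with $a,b\in\{0,1\}$ (e.g.\ $*2+*2$ has genus $0^0$ and is firm), and $g^{g\oplus 1}$ is tame only for $g\le 1$; your actual case split by heap size and your final check on $0^1,1^0$ do not depend on either.
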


\begin{proof}
Let $\mathscr{T}$ be the quotient of all tame games. It is known that for all $G \in \mathscr{T}$, there exists a sum of nimbers $H$ such that $G \equiv_{\mathscr{T}} H$ and $G = H$. In particular, $o^-(G) = o^-(H)$, $o^-(G+*) = o^-(H+*)$ and $o^+(G) = o^+(H)$, $o^+(G+*) = o^+(H+*)$. Since either $o^+(H) = o^-(H)$ or $o^+(H+*) = o^-(H)$ and $o^-(H+*) = o^+(H)$, the same is true for $G$. 
\end{proof}

The purpose of this paper is to extend Theorem \ref{lo_thm} to a larger set of games, including many mutant flowers of the form $\{*x_1,*x_2,\dots,*x_m\}:a$, where $a$ is a dyadic rational. The terminology of the following theorem will be defined in Sections \ref{3} and \ref{wildflowers}.  \begin{thm}
  \label{main theorem 1, minus teminology}
    Let $\mathscr{S}$ be the set of all fickle restricted wildflowers, and let $\mathscr{H}$ be the set of all firm restricted wildflowers. Define
    \begin{equation}
        G^* = \begin{cases}
            G & \text{if } G \notin \cl(\mathscr{L}) \\
            G+* & \text{if } G \in \cl(\mathscr{L}).
        \end{cases}
    \end{equation}Then $o^+(G) = o^-(G^*)$ and $o^-(G) = o^+(G^*)$. 
\end{thm}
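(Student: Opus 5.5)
We prove the statement by strong induction on the combined birthday of $G$, verifying the evil twin property simultaneously for every game in the closure $\cl(\mathscr{S}\cup\mathscr{H})$. Two facts are used throughout: this closure is closed under taking options, and the dichotomy $G\in\cl(\mathscr{L})$ versus $G\notin\cl(\mathscr{L})$ behaves predictably under moves.

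Concretely, we interpret $\cl(\mathscr{L})$ as the sums built entirely from fickle-type summands in which every nimber that appears is at most $*$. These are exactly the summands that play the role of sprigs $*:a$ in Theorem~\ref{mckay_thm}. A firm summand, or a larger nimber $*n$ with $n\ge 2$, plays the role of the case $m>1$ in Theorem~\ref{lo_thm}, where no extra $*$ is needed.

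The core is a misère genus-style dichotomy, in the spirit of Theorem~\ref{impartial_evil}. For a sum $G$ we attach the pair (normal outcome, misère outcome) and show that, up to the $+*$ twist, each summand behaves like a number plus a nimber whenever play reaches its "nimber phase." The steps are as follows.

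\begin{enumerate}
\item Show that if the number parts of the summands do not sum to $0$ under normal play, then the player favoured by the numbers wins under both conventions. She plays on the number parts, or on the ordinal-sum tops, and her tempo advantage makes the end convention irrelevant. Hence $o^+(G)=o^-(G)=o^-(G^*)$. This needs a lemma that a positive number part can absorb the $\pm1$ parity swing caused by adding $*$.
\item In the balanced case, reduce to the "reversible" moves. Removing a flower top $a$ reduces the summand to its stalk, which is a nimber or a set of nimbers $\{*x_1,\dots,*x_m\}$. Show that the outcome is then governed by the nim-sum, as in Conway's misère genus theory. Misère play differs from normal play exactly when all remaining heaps are of size $\le 1$, which is precisely the $\cl(\mathscr{L})$ case and is the reason for the extra $*$.
\item Carry out the inductive step. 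For each Left option $G^L$ of $G$, the induction hypothesis gives $o^+(G^L)=o^-((G^L)^*)$. We check that $(G^L)^*=(G^*)^L$ for a corresponding option, meaning that membership in $\cl(\mathscr{L})$ changes only in the cases where a $*$ is created or destroyed. This lets winning strategies transfer between $G$ under normal play and $G^*$ under misère play, and the same argument handles Right options.
\end{enumerate}

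The main obstacle is the step at which membership in $\cl(\mathscr{L})$ changes: a move that reduces the last "large" component, a firm wildflower or a mutant $\{*x_i\}$ with some $x_i\ge2$, to something in $\cl(\mathscr{L})$. At that point $G^*$ gains a $*$ that $G^{L*}$'s predecessor lacked, so the correspondence in step 3 breaks.

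We plan to handle this case directly, using the genus-theory idea that the player moving on the last large heap can choose to leave either $0$ or $*$. In the fickle/firm language, the restricted condition should guarantee that the mover has both options available, so that she can repair the parity. We then verify by a case analysis that this choice exists for every restricted wildflower, and that it fails exactly outside the restricted class. The failure outside the class is consistent with the claimed maximality of the set.
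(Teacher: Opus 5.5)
Your proposal has genuine gaps. Steps 1 and 2 rest on claims that are false for wildflowers, and step 3 leaves out the mechanisms that actually carry the argument.

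\textbf{Steps 1 and 2.} A wildflower $G:H$ has no ``number part'' in general. For a restricted firm wildflower the top $H$ is an arbitrary game, and for a restricted fickle one it is any member of $\mathscr{R}_{\mathcal{G}^+(G)}$. Even for sprigs, the sign of $\sum a_i$ does not decide the outcome. What controls the lopsided cases is the number of blue minus red wildflowers, i.e.\ atomic weight (Theorems~\ref{atomic_weight_og} and~\ref{atomic_weight}).

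For example, in $*:1+*:1+*:(-3)$ the tops sum to $-1$, so your step 1 predicts that Right wins under both conventions. Yet Left, moving first, wins under both conventions by taking $*:(-3)$ to $0$, because $*:1+*:1\in\mathcal{L}^+\cap\mathcal{L}^-$. So the conclusion $o^+(G)=o^-(G)$ of step 1 fails there. The reduction in step 2 to a ``balanced'' case governed by a nim-sum also has no justification. It imports the nim fact that mis\`ere differs only when all heaps are $\le 1$, which has no counterpart while partizan tops remain. Relatedly, the non-kernel part in the statement is meant to be $\cl(\mathscr{S})$ (see Theorem~\ref{big}), not ``fickle summands whose nimbers are at most $*$.''

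\textbf{Step 3.} Your fix for moves from the kernel into $\cl(\mathscr{S})$ is right: the mover needs star-closed options. That is exactly the hypothesis of Theorem~\ref{a-k extension}. Three things are missing, however.

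\begin{enumerate}
\item \emph{The extra option of $G+*$.} For $G\in\cl(\mathscr{S})$ you must compare $G$ under normal play with $G+*$ under mis\`ere play, and vice versa. But $G+*$ has the extra option $G$ (from $*\to 0$). It has no counterpart among the options of $G$, and the induction hypothesis does not cover it, so $(G^L)^*=(G^*)^L$ cannot handle it. The paper disposes of it with Lemma~\ref{star moves suboptimal}, which says this move is never the unique winning move. That lemma relies on the mis\`ere blue-flower ploy, Theorem~\ref{atomic_weight}, which is in turn built on Theorem~\ref{ord_sum_ord}.
\item \emph{Moves from $\cl(\mathscr{S})$ into the kernel.} A fickle wildflower $G:H$ can move to a firm option of its stalk, or to $G:H'$ with $H'\in\mathcal{N}^+$. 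Such a move changes sides of the dichotomy without any $*$ being created or destroyed, so your description of when membership changes misses it. Here one needs another option equal to it plus $*$ in normal play. This is precisely what the defining conditions of $\mathscr{R}_i$, together with the colon principle, supply in Theorem~\ref{add to a- (a-k)}.
\item \emph{Evil normality.} To carry the correspondence across a sum, you need that kernel membership of $A+B$ is determined summand by summand.
\end{enumerate}

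As written, the ``case analysis'' you defer is essentially the whole proof.
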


\begin{thm}
 \label{main theorem for mutant flowers, minus terminology}
    Let $\mathscr{M}$ be the set of all sums of mutant flowers. Say that $G \in\mathscr{M} \cap \cl(\mathscr{S}\cup \mathscr{H}) $, and let the maximum height of a flower in $G$ be $m$. Define \begin{equation}
        G^* = \begin{cases}
            G & \text{if } m > 1 \\
            G+* & \text{if } m\leq 1.
        \end{cases}
    \end{equation}
    Then $o^+(G) = o^-(G^*)$ and $o^-(G) = o^+(G^*)$. 
\end{thm}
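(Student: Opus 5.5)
The plan is to derive this statement from Theorem \ref{main theorem 1, minus teminology}, by showing that for $G \in \mathscr{M}\cap\cl(\mathscr{S}\cup\mathscr{H})$ the height condition $m\le 1$ is equivalent to $G \in \cl(\mathscr{L})$. Once that equivalence holds, the two definitions of $G^*$ agree and the conclusion follows at once. I expect $\mathscr{L}$, defined in Sections \ref{3} and \ref{wildflowers}, to be the set of "sprig-like" wildflowers: those whose stalk is a game whose options are all $0$, or otherwise behaves like $*$ in both conventions. In Theorem \ref{lo_thm} these are exactly the flowers needing the extra $*$. So the first step is to unpack the definitions of fickle, firm and restricted, and to check that a mutant flower $\{*x_1,\dots,*x_n\}:a$ of height $\max x_i$ lies in $\mathscr{L}$ exactly when its height is at most $1$. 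At height $\le 1$ the stalk is $\{0\}$ or $\{0,*\}$-like and reduces to $*$, so the flower behaves as a sprig. At height $\ge 2$ some option $*x_i$ with $x_i\ge2$ lets the player mimic the nim strategy that flips misère outcomes, which puts the flower in the other class.

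The second step handles sums. I need that $\cl(\mathscr{L})$, intersected with sums of mutant flowers, consists exactly of the sums all of whose summands lie in $\mathscr{L}$. The inclusion from right to left is immediate. For the other direction I will use the structure of the closure. If the closure is generated by sums and options, then any sum of mutant flowers in it is a sum of components each of which is a flower in $\cl(\mathscr{L})$. I then argue that a mutant flower of height $\ge 2$ cannot be an option or sub-sum of a sum of sprig-like flowers. The reason is that options of $\{*x_i\}:a$ are either $*x_i$ itself or a flower with the same stalk, and options of sprig-like flowers never create a nimber of height $\ge2$. Hence $m\le1$ exactly when every flower is in $\mathscr{L}$, which holds exactly when $G\in\cl(\mathscr{L})$.

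Two degenerate cases need checking separately. The first is when some flower has been reduced to a bare nimber $*x_i$ after Right or Left cuts the stalk. I will treat a bare $*k$ as a flower of height $k$ with trivial blossom, so the case analysis still applies. The second is the empty sum ($m=0$ or undefined). There $G=0$, $o^+(0)=\mathcal P$ and $o^-(0+*)=\mathcal P$, which matches $G^*=G+*$.

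The main obstacle is the second step. I have to identify precisely how $\cl(\cdot)$ and $\mathscr{L}$ are defined, and confirm that membership in $\cl(\mathscr{L})$ is "additive and local" on flower components. If the closure can produce sums whose components are not individually in $\mathscr{L}$, for instance via options like $*x_i$ with $x_i\ge2$ of flowers in $\mathscr{S}$, the equivalence with $m\le1$ fails, and I would have to refine the argument using the hypothesis $G\in\cl(\mathscr{S}\cup\mathscr{H})$. In that case I would first try to show that any component of height $\ge2$ is fickle, and so forces $G\notin\cl(\mathscr{L})$.
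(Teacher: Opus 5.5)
Your overall route matches the paper's: the mutant-flower statement is read off from the wildflower theorem (Theorem~\ref{big}) by checking that, for $G\in\mathscr{M}\cap\cl(\mathscr{S}\cup\mathscr{H})$, the condition $m\le 1$ says exactly that $G$ lies outside the evil kernel. But your classification step is false as you set it up. The paper defines the height of $\{*x_1,\dots,*x_n\}:a$ as $\mex\{x_i\}$, not $\max x_i$, and with $\max$ your first step fails in both directions:
\begin{itemize}
\item The flower $\{0,*\}:a\cong *2:a$ has $\max=1$, but its stalk is $*2$ (not $*$), which is firm. By Theorem~\ref{lo_thm}, $*2:1+*2:-1\in\mathcal{P}^+\cap\mathcal{P}^-$, so $G^*=G+*$ is wrong there.
\item The flower $\{0,*2,*3\}:\pm1$ has $\max=3$ but is a restricted fickle wildflower. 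The paper computes $\{0,*2,*3\}:1+\{0,*2,*3\}:-1\in\mathcal{P}^+\cap\mathcal{N}^-$, so $G^*=G$ is wrong there.
\end{itemize}
The missing idea is the genus of the stalk. We have $\mathcal{G}^+(\{*x_i\})=\mex\{x_i\}$, while $\mathcal{G}^-(\{*x_i\})$ is the mex of $\{x_i\}$ with $0$ and $1$ interchanged. So if both $0$ and $*$ are options (height $\ge 2$), the genus is $n^n$ with $n\ge 2$ and the flower lies in $\mathscr{H}$. If exactly one of them is an option, the genus is $1^0$ or $0^1$. In that case the hypothesis $G\in\cl(\mathscr{S}\cup\mathscr{H})$ supplies the star-closure of $\{x_i\}\setminus\{0,1\}$, which puts the flower in $\mathscr{S}$. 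What decides the class is whether $0$ and $*$ are both present, not whether some $x_i\ge 2$ occurs.

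There is one caveat, which the paper's one-line classification also misses. If neither $0$ nor $1$ occurs, as for the stalk $\{*2,*3\}$, the genus is $0^0$, which is firm. Indeed $\{*2,*3\}:1+\{*2,*3\}:-1\in\mathcal{P}^+\cap\mathcal{P}^-$, so the correct criterion is ``every stalk is fickle'' rather than literally $m\le 1$.

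Your second step aims at the wrong set. The role of the (undefined) $\cl(\mathscr{L})$ is played by $\ac(\mathscr{S})$, the \emph{additive} closure of the restricted fickle wildflowers. That is what the proof of Theorem~\ref{big} and the restatement at the end of Section~\ref{5} actually use; it is not a hereditary closure of ``sprig-like'' flowers. A hereditary closure is not local in the way you need. The flower $\{0,*2,*3\}:1\in\mathscr{S}$ has the firm options $*2$ and $*3$, so your claim that options never produce nimbers of height $\ge 2$ fails. For example, $*2+*2\in\cl(\mathscr{S})$ lies in $\mathcal{P}^+\cap\mathcal{P}^-$ and therefore must be in the kernel.

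In fact no closure analysis is needed. Evil normality says a sum lies outside the kernel iff every summand does. So $G$ is outside the kernel iff every flower of $G$ lies in $\mathscr{S}$, with a bare $*k$ counting as height $k$ as you suggest. By the classification above, that is the condition $m\le 1$. Your fallback of showing that components of height $\ge 2$ are fickle has it backwards: such components are firm, and that is exactly what puts $G$ in the kernel and gives $G^*=G$.
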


In Section \ref{3}, we define terminology that will be used in later sections and present some conjectures on the structure of sets with the evil twin property. In Section \ref{4}, we prove some general theorems on sets with the evil twin property. In Section \ref{wildflowers}, we define the types of games we will investigate and state our main theorem. In Section \ref{5}, we prove some results specific to wildflowers that enable us to prove our main theorem. In Section \ref{6}, we prove that it is difficult to find the outcome class of a sum of wildflowers despite the main theorem. We conclude with some questions in Section \ref{7}.

Before we start, we must make our notation clear. There are several definitions of equality, inequality,  and outcome classes that are easily confused, so we will be careful to first make our notation clear before proving results.

Any combinatorial game $G$ has four outcome classes, denoted $\mathcal{L}$, for Left winning moving both first and second, $\mathcal{R}$ for Right winning moving both first and second, $\mathcal{N}$ for the first player always winning, and $\mathcal{P}$ for the second player always winning. These option classes are partially ordered by favorability for Left  as shown in Figure~\ref{fig:outcomeposet}. For example, $\mathcal{L} > \mathcal{P}$, but $\mathcal{N}$ cannot be compared to $\mathcal{P}$.

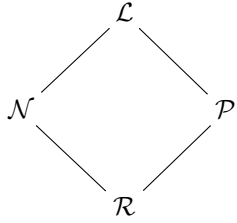
\begin{figure}
\[\xymatrix{ & \mathcal{L}\ar@{-}[dl]\ar@{-}[dr] \\ \mathcal{N}\ar@{-}[dr] && \mathcal{P}\ar@{-}[dl] \\ & \mathcal{R}}\]
\caption{The poset of outcome classes.}
\label{fig:outcomeposet}
\end{figure}

These outcome classes can differ between play conventions. The outcome class of a game $G$ under normal play will be denoted $o^+(G)$, and the outcome class of $G$ under mis\`ere play will be denoted $o^-(G)$. For example, $o^+(0) = \mathcal{P}$, while $o^-(0) = \mathcal{N}$. We define $\mathcal{O}^+$, where $\mathcal{O} \in \{\mathcal{L,R,N,P}\}$, to be the set of all games $G$ where $o^+(G) = \mathcal{O}$, and $\mathcal{O}^-$ to be the set of all games $G$ where $o^-(G) = \mathcal{O}$. This means that $G \in \mathcal{O}^+$ is equivalent to $o^+(G) = \mathcal{O}$, and similarly for $\mathcal{O}^-$. 

The simplest way to define equality is in terms of game trees. If $G$ and $H$ have the same game tree, we say that they are literally equal or identical, and write $G \cong H$.

We write $G = H$ if $o^+(G+X) = o^+(H+X)$ for all games $X$, and $G \geq H$ if $o^+(G+X) \geq o^+(H+X)$ for all games $X$. There are easy tests for equality and inequality in normal play. Define the conjugate of a game $G = \{\mathcal{G}^{\mathcal{L}}\mid\mathcal{G}^{\mathcal{R}}\}$ recursively as $\overline{G }= \{\overline{\mathcal{G}^{\mathcal{L}}}\mid \overline{\mathcal{G}^{\mathcal{R}} }\}$. In normal play, $G + \overline{G} = 0$, so the conjugate is usually written as the negative $-G = \overline{G}$. 
It can be shown that $G = H$ if and only if $G-H \in \mathcal{P}^+$, and $G\geq H$ if and only if $G - H \in \mathcal{L}^+ \cup \mathcal{P}^+$. 

Inequality in mis\`ere play is defined analogously. We write $G=^- H$ if $o^-(G+X) =o^-(G+H) $ for all games $X$ and $G \geq^- H$ if $o^-(G+X) \geq o^-(H+X)$ for all games $X$. However, there are no easy tests for equality or inequality in mis\`ere play. For example, Ottoway showed in \cite{ottoway_zero} that $G+\overline{G} \neq^- 0$ unless $G$ is identical to $0$, which is the reason the notation $-G$ is avoided in mis\`ere play.

The difficulty of finding games that are equal prevented much progress from being made in mis\`ere play until Plambeck and Siegel introduced \emph{mis\`ere quotients} in \cite{plambeck2008misere}. Mis\`ere quotients restrict the games $X$ that can be added to $G$ and $H$.
A set of games $\mathscr{A}$ is \emph{closed under addition} if for any $G,H \in \mathscr{A}$, we have $G +H \in \mathscr{A}$. A set $\mathscr{A}$ is said to be \emph{hereditarily closed} if for any $G \in \mathscr{A}$ and any subposition $G'$ of $G$, we have $G' \in \mathscr{A}$. A set $\mathscr{A}$ is said to be \emph{closed} if $\mathscr{A}$ is both closed under addition and hereditarily closed. The closure $\cl (\mathscr{A})$ of a set $\mathscr{A}$ of games is the smallest closed set of games containing $\mathscr{A}$. The additive closure $\text{ac}(\mathscr{A})$ of a set $\mathscr{A}$ is the smallest set of games closed under addition containing $\mathscr{A}$.

Then for $G,H \in \mathscr{A}$, with $\mathscr{A}$ a closed set, we write $G \equiv_{\mathscr{A}} H$ if $o^-(G+X)=o^-(H+X)$ for all $X \in \mathscr{A}$, and $G \geq_{\mathscr{A}} H$ if $o^-(G+X)\geq o^-(H+X)$ for all $X \in \mathscr{A}$. 

A particularly important quotient is that of dicotic games, those where either both Left and Right have options, or neither do. The set of all dicotic games is denoted $\mathscr{D}$. An important equality in the universe of dicotic games is $*+* \equiv 0 \pmod {\mathscr{D}}$ which was proven in \cite{allen2015peeking}.

We also use the mis\`ere genus theory of Conway. Let $G$ be an impartial game. We define the normal play Grundy value of $G$ recursively as \begin{equation}
    \mathcal{G}^+(G) = \begin{cases}
    \mex(G') & G \not \cong 0 \\
    0 & G  =0\\
\end{cases}.
\end{equation} The mis\`ere play Grundy value of $G$ can be found similarly as \begin{equation}
    \mathcal{G}^-(G) = \begin{cases}
    \mex(G') & G \not \cong 0 \\
    1 & G  =0\\
\end{cases}.
\end{equation} The genus of $G$ is denoted $a^b$, where $a = \mathcal{G}^+(G)$ and $b = \mathcal{G}^-(G)$. It is known that $\mathcal{G}^+(G)$ is the only $n$ such that $G+*n \in \mathcal{P}^+$, and symmetrically for $\mathcal{G}^-(G)$. A game $G$ is said to be \emph{tame} if the genus of $G$ is $1^0$ ,$0^1$ or $n^n$. A tame game is \emph{fickle} if it's genus is $1^0$ or $0^1$, and \emph{firm} otherwise. It is known that fickle games act like $0$'s and $*$'s in a sum of nimbers under mis\`ere play, while firm games act like $*n + *2+*2$.  An application of genus theory is to solve certain mis\`ere play octal games. For more on genus theory, see \cite{conway2000numbers}. We will use genus theory to define certain sets with desirable properties.

\section{Evil Twins and Kernels}
\label{3}

We give a name to the property of Theorem \ref{lo_thm}.

\begin{defn}
    A closed set of games $\mathscr{A}$ with $* \in \mathscr{A}$ is said to have the evil twin property if there exists a subset $\mathscr{K}  \subseteq \mathscr{A}$ such that if we define \begin{equation}
          G^* = \begin{cases}
            G & \text{if } G \in \mathscr{K} \\
            G+* & \text{if } G \notin \mathscr{K},
            \end{cases}
    \end{equation}
    then $o^+(G) = o^-(G^*)$ and $o^-(G) = o^+(G^*)$. The subset $\mathscr{K}$ is called an \emph{evil kernel} of $\mathscr{A}$. 
\end{defn}

\begin{ex}
    Theorem \ref{mckay_thm} is equivalent to the statement that $\text{cl}(\{*:a \mid a \in \mathbb{D}\})$ has the evil twin property with evil kernel $\varnothing$. 
\end{ex}

\begin{ex}
    \label{lo_example}
   Theorem \ref{lo_thm} is equivalent to the statement that  $\cl (\{*n:a \mid a \in \mathbb{D},n \in \mathbb{N}\})$ has the evil twin property with evil kernel $\cl (\{*n:a \mid a \in \mathbb{D},n \in \mathbb{N}\})\setminus \cl (\{*:a \mid a \in \mathbb{D}\})$.
\end{ex}
\begin{ex}
    Theorem \ref{impartial_evil} is equivalent to the statement that the set of tame games has the evil twin property, where the evil kernel is the set of all fickle games.
\end{ex}
Notice that if an impartial game $G$ is in the kernel, then $G$ must be firm, and vice versa if $G$ is outside the kernel. Sets with the evil twin property partially generalize Conway's genus theory to partizan games. $\mathscr{K}$ is called \emph{an} evil kernel because $\mathscr{K}$ is not necessarily unique. For example, we have that $*2:1 + *2:1 \in \mathcal{L}^+ \cap \mathcal{L}^-$, and $*2:1 + *2:1+* \in \mathcal{L}^+ \cap \mathcal{L}^-$, so it is irrelevant if $*2:1+*2:1 \in \mathscr{K}$ or not. Thus, $\cl (\{*n:a \mid a \in \mathbb{D},n \in \mathbb{N}\})\setminus (\cl (\{*:a \mid a \in \mathbb{D}\}) \cup G)$, where $G = *2:1+*2:1$, is also an evil kernel.

 $\mathscr{K}$ is called a kernel due to an analogy to the kernels of impartial mis\`ere quotients, which are defined in \cite{plambeck2008misere}. Plambeck and Siegel prove the following theorem in \cite{plambeck2008misere}.
 \begin{thm}
     Let $\mathscr{K}$ be the kernel of a finite faithful regular mis\`ere quotient $\mathcal{Q} (\mathscr{A})$. If $G \in \mathscr{A}$, then a winning move in $G$ under normal play is also a winning move under mis\`ere play, unless that move is to a game outside of the kernel.
\end{thm}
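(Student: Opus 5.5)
The plan is to reduce everything to one structural fact about the kernel: \emph{on the kernel, the mis\`ere $\mathcal{P}$-positions are exactly the normal-play $\mathcal{P}$-positions.} Once that is established, the statement is immediate. Suppose a move $G \to G'$ is winning under normal play, so $G' \in \mathcal{P}^+$. If the image of $G'$ in $\mathcal{Q}(\mathscr{A})$ lies in $\mathscr{K}$, the fact gives $G' \in \mathcal{P}^-$. Hence the same move is also winning under mis\`ere play.

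First I will recall the algebraic setup. Let $\Phi:\mathscr{A}\to\mathcal{Q}=\mathcal{Q}(\mathscr{A})$ be the quotient map and let $\mathcal{P}\subseteq\mathcal{Q}$ be the image of the mis\`ere $\mathcal{P}$-positions. Since $\mathcal{Q}$ is a finite commutative monoid, it has a unique minimal ideal $\mathscr{K}$, and this ideal is an abelian group with identity an idempotent $z$. Multiplication by $z$ is a retraction $\mathcal{Q}\to\mathscr{K}$.

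Second, I will relate $\mathcal{Q}$ to normal play. Faithfulness and regularity should supply a normal-play homomorphism $\beta:\mathcal{Q}\to\mathcal{Q}^+(\mathscr{A})$ with $\beta(\Phi(G))$ equal to the normal-play (Grundy) class of $G$. The key step is to show that $\beta$ restricted to $\mathscr{K}$ is injective. I would argue this using the fact that $z$ is the class of a sum $H$ of games that act like $*2+*2$: adding such an $H$ "tames" the mis\`ere end condition. Concretely, in $G+H$ a player who would be forced to make the last move can instead move in $H$ to reverse the parity. A mimicking-strategy argument (essentially Conway's $*2+*2$ argument from genus theory) should then show that $o^-(X+H)=o^+(X+H)$ for all $X\in\mathscr{A}$. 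Since every kernel element has the form $xz$, this gives the claimed fact: for $x\in\mathscr{K}$, $x\in\mathcal{P}$ if and only if $\beta(x)=0$.

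The main obstacle is this second step. I need to justify that the idempotent $z$ is represented by a game $H$ for which the parity-reversal strategy works. The difficulty is that a general closed $\mathscr{A}$ need not contain $*2$ itself. I would first try to use finiteness: some power $x^{n}$ of any element is idempotent and lies in $\mathscr{K}$. Taking $x$ to be the class of a firm game of large enough order, a high multiple of it should behave like $*2+*2$, in the sense that it offers reversible moves to either parity. Regularity would then be invoked precisely to ensure that such a reversing move always exists. Once the restricted outcome equality on $\mathscr{K}$ is established, the concluding argument in the first paragraph requires no further work.
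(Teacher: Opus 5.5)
The paper does not prove this statement; it only quotes it from Plambeck and Siegel. So your proposal has to stand on its own, and it does not yet. Your opening reduction is correct, and so are the semigroup facts. Note also that $\beta(z)=0$ is automatic, since $\beta(z)$ is an idempotent in a group. But the implication that $\Phi(G')\in\mathscr{K}$ and $G'\in\mathcal{P}^+$ force $G'\in\mathcal{P}^-$ is essentially the whole theorem, and you do not prove it. You guess what faithfulness and regularity should supply instead of working from their definitions. The step you yourself call the main obstacle, a representative $H$ of $z$ with $o^-(X+H)=o^+(X+H)$ for all $X\in\mathscr{A}$, is left as a heuristic. Injectivity of $\beta$ on $\mathscr{K}$ is also not the key step: it would follow from the correspondence rather than produce it.

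More importantly, the heuristic cannot be made into a proof, because the parity-reversing behaviour of $*2+*2$ is a tame phenomenon. Let $W=\{*,\,*2+*2+*\}$. Both options have Grundy value $1$, so $\mathcal{G}^+(W)=0$ and $W+n(*2+*2)\in\mathcal{P}^+$ for all $n$. But $W$ plus an odd number of copies of $*2$ is a mis\`ere $\mathcal{P}$-position:
\begin{itemize}
\item moving in $W$ leaves a nim position with a heap $\ge 2$ and nim-sum $3$;
\item turning a $*2$ into $*$ is answered by $W\to *2+*2+*$;
\item turning a $*2$ into $0$ is answered by removing another $*2$ (by induction), or by $W\to *$ if none is left.
\end{itemize}
Hence $W+n(*2+*2)\in\mathcal{N}^-$ for every $n$. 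So no argument of the form \emph{add enough copies of a game offering both parities and the mis\`ere ending disappears} is valid for arbitrary $X$. Moreover, \emph{firm} is not even defined for the wild games that a closed set with finite quotient may contain.

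The conclusion has to come from the finiteness of $\mathcal{Q}(\mathscr{A})$ together with the precise regularity and faithfulness hypotheses, and these do real work. For $\mathscr{A}=\cl(\{*\})$ the quotient is the group $\{1,a\}$, so $\mathscr{K}$ is everything and $z=\Phi(0)$. Yet the normal-play winning move $*\to 0$ loses in mis\`ere play. Your sketch never says what regularity actually asserts or how it would produce the reversing moves you need, so the central step remains unproved.
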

The analogous theorem for games with the evil twin property is the following.
\begin{thm}
    Let $\mathscr{K}$ be the kernel of a set $\mathscr{A}$ of games with the evil twin property. If $G \in \mathscr{A}$, then a winning move in $G$ under normal play is also a winning move under mis\`ere play, unless that move is outside of the kernel.
\end{thm}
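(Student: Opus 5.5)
The plan is to unwind the definition of the evil twin property and apply it to the option reached, not to $G$ itself. Fix $G \in \mathscr{A}$ and suppose, by symmetry, that Left has a winning move in $G$ under normal play, to an option $G^L$. Saying this move wins for Left moving first under normal play means Left wins $G^L$ moving second, i.e.\ $o^+(G^L) \in \{\mathcal{L},\mathcal{P}\}$. Likewise, the move is winning under mis\`ere play exactly when $o^-(G^L) \in \{\mathcal{L},\mathcal{P}\}$. So it suffices to show that $o^-(G^L) = o^+(G^L)$ whenever $G^L \in \mathscr{K}$.

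Since $\mathscr{A}$ is closed, and in particular hereditarily closed, the option $G^L$ lies in $\mathscr{A}$. So the defining property of the evil kernel applies to $G^L$. Assume $G^L \in \mathscr{K}$, i.e.\ we are not in the excluded case of a move outside the kernel. By the definition of the evil twin property, $(G^L)^* = G^L$. The defining identities then read
\[
o^+(G^L) = o^-\bigl((G^L)^*\bigr) = o^-(G^L),
\]
and the second identity $o^-(G^L) = o^+((G^L)^*)$ gives the same equation. Hence $o^-(G^L) = o^+(G^L) \in \{\mathcal{L},\mathcal{P}\}$, so Left wins $G^L$ moving second under mis\`ere play, and $G^L$ is a winning mis\`ere move as well.

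The argument for Right is identical, with $G^R$ in place of $G^L$ and $\{\mathcal{R},\mathcal{P}\}$ in place of $\{\mathcal{L},\mathcal{P}\}$. There is no real obstacle here. The only points to be careful about are, first, that hereditary closure is what allows the kernel condition to be invoked at the option $G^L$ rather than at $G$. Second, the notion of ``winning move'' must be translated correctly into the outcome class of the option moving second.

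For a move to an option outside $\mathscr{K}$, the identity instead pairs $o^+(G^L)$ with $o^-(G^L+*)$, which says nothing directly about $o^-(G^L)$. This is why such moves are excluded in the statement.
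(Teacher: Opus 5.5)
Your proof is correct and follows the paper's argument: for a move to an option $G' \in \mathscr{K}$, the kernel condition gives $o^+(G') = o^-(G')$, so the move wins under both conventions. Beyond the paper, you make explicit two points it leaves implicit: hereditary closure is what puts the option in $\mathscr{A}$, and a winning move means the option is won by the mover with the opponent to play.
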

\begin{proof}
    Let the winning move in normal play be to move to $G' \in \mathscr{K}$. Then since $o^+(G') = o^-(G')$, $G'$ is a winning move in mis\`ere play as well.
\end{proof}

\section{Normal evilness and star-closure}
\label{4}
In this section, we prove two theorems allowing us to extend sets with the evil twin property, assuming that there exists an evil kernel with a certain structure. 
We will first give a short proof of Theorem \ref{lo_thm} assuming Theorem \ref{mckay_thm} to demonstrate some of the methods used.

\begin{proof}[Proof of Theorem \ref{lo_thm}]
     Let $\mathscr{A} = \text{cl}(*n:a), n > 0$ be the set of all sums of generalized flowers, and $\mathscr{K} = \text{cl}(*:a), n > 1$. Define $G^* = G$ for $G \in \mathscr{K}$ and $G^* = G+*$ for $G \in \mathscr{A}\setminus \mathscr{K}$. Then we need to prove that $o^-(G) = o^+(G^*)$ and $o^-(G^*)=o^+(G)$.
    
We will induct on the birthday of $G$, so we can assume that the theorem has already been proven for the options of $G$.  Since  Theorem \ref{mckay_thm} covers the case where all the flowers are of height $1$, so we can assume $G \in \mathscr{K}$, or that $G$ has a flower of height $>1$. We will show that for every Left option $G^L$ of $G$, there exists another option ${G^{L}}'$ such that $o^+(G^L) = o^-({G^L}')$ and $o^-(G^L) = o^+({G^L}')$. This would imply that $o^-(G) = o^+(G)$, since there exist options with the same outcome class in both play conventions.

If $G^{L} \in \mathscr{K}$, then ${G^{L}}'= G^L$ works. Now suppose $G^{L} \in \mathscr{A}\setminus\mathscr{K}$. Then Left must have moved to $G^L$ by moving in the last generalized flower of height $>1$ to either $*$ or $0$. So we can find another Left option ${G^{L}}'$ with the property that $  {G^L}'=G^{L}+* $ in normal play by choosing to move to the other element of $\{0,*\}$ in that last tall flower not chosen in the move from $G$ to $G^L$. Then we calculate that $o^-(G^L) = o^+({G^L}^*) = o^+ ({G^L}')$ and $o^+(G^L) = o^+({G^L}' +*) = o^- ({G^L}' +*+*) = o^-({G^L}')$. Thus, $o^+(G) = o^-(G)$.
 \end{proof}
There were two properties of generalized flowers that made the last proof possible. 
\begin{enumerate}
    \item For all $G,H  \in \mathscr{A}$, we have $G+H \notin \mathscr{K}$ if and only if $G,H \notin \mathscr{K}$.
    \item For all $G \in \mathscr{A}$, if $0$ is a Left (Right) option of $G$, then $*$ is also a Left (Right) option of $G$.
 \end{enumerate}
We will generalize both of these conditions.

The first condition generalizes exactly.
\begin{defn}
    A pair $(\mathscr{A},\mathscr{K})$ of  a closed set $\mathscr{A} \subseteq \mathscr{D}$ and $ \mathscr{K} \subseteq \mathscr{A}$, where $\mathscr{K}$ is an evil kernel, is called \emph{evilly normal} if, for all $A,B \in \mathscr{A}$, $A+B \notin \mathscr{K}$ if and only if $A \not\in \mathscr{K} $ and $B \notin \mathscr{K}$.
\end{defn}
All our previous examples are evilly normal.
\begin{ex}
Theorem    \ref{mckay_thm} is equivalent to the statement that $(\cl(\{(*:a)\mid a \in \mathbb{D}),\varnothing)$ is evilly normal.
\end{ex}
\begin{ex}
  Theorem  \ref{lo_thm} is equivalent to the statement that $(\cl(\{(*n:a)\mid  a \in \mathbb{D},n \in \mathbb{N}\}),\cl(\{(*n:a)\mid a \in \mathbb{D},n \in \mathbb{N}\})\setminus\cl(\{(*:a)\mid a \in \mathbb{D}\}))$ is evilly normal.\end{ex}
\begin{ex}
Let the set of firm games be $\mathscr{H}$ and the set of fickle games be $\mathscr{S}$. Then $(\mathscr{S}\cup \mathscr{H}, \mathscr{H})$  is evilly normal.
\end{ex}
\begin{ex}
  \label{star based counterexample}
    It can be calculated that $\cl (\{0,* \mid *\} , \{*\mid 0,*\}, *2: 1 , *2:-1)$ has $\cl (\{0,* \mid *\} , \{*\mid 0,*\}, *2:1, *2:-1,) \setminus \cl (\{0,* \mid *\} , \{*\mid 0,*\})$ as a kernel.
\end{ex}

We speculate the following:

\begin{conj}
\label{all_norm}
If $\mathscr{A}$ has the evil twin property, then there exists a kernel $\mathscr{K}$ such that $(\mathscr{A},\mathscr{K})$ is evilly normal.
\end{conj}

$(\mathscr{A},\mathscr{K})$ being evilly normal gives information about the $\equiv _{\mathscr{A}}$ relation.
\begin{thm}
    Let $G=H$ in normal play. Then if $G $ and $H$ are both firm, or both fickle, then $G \equiv_{\mathscr{A}} H$.
\end{thm}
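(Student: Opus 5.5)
The plan is to read ``firm'' as $G\in\mathscr{K}$ and ``fickle'' as $G\in\mathscr{A}\setminus\mathscr{K}$, in line with the example $(\mathscr{S}\cup\mathscr{H},\mathscr{H})$. The statement then says: for $G,H\in\mathscr{A}$ with $(\mathscr{A},\mathscr{K})$ evilly normal and $G=H$ in normal play, if $G,H$ lie on the same side of $\mathscr{K}$, then $o^-(G+X)=o^-(H+X)$ for every $X\in\mathscr{A}$. The idea is to convert each mis\`ere outcome into a normal outcome using the evil twin property, use $G=H$ in normal play, and convert back. Evil normality is what guarantees that $G+X$ and $H+X$ receive the same twin.

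First, fix $X\in\mathscr{A}$. Since $\mathscr{A}$ is closed under addition, $G+X,H+X\in\mathscr{A}$, so the evil twin property applies to both. Next, I check that $G+X\in\mathscr{K}$ if and only if $H+X\in\mathscr{K}$. By evil normality, $G+X\notin\mathscr{K}$ exactly when $G\notin\mathscr{K}$ and $X\notin\mathscr{K}$, and similarly for $H+X$. Because $G\in\mathscr{K}\iff H\in\mathscr{K}$ by hypothesis, the two conditions coincide. Hence $(G+X)^*$ and $(H+X)^*$ are obtained by the same rule: either neither gets $+*$ or both do.

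Then I split into the two cases. If both sums lie in $\mathscr{K}$, then
\[o^-(G+X)=o^+(G+X)=o^+(H+X)=o^-(H+X).\]
The middle equality holds because $G=H$ in normal play means $o^+(G+Y)=o^+(H+Y)$ for every game $Y$, here with $Y=X$. If neither lies in $\mathscr{K}$, then
\[o^-(G+X)=o^+(G+X+*)=o^+(H+X+*)=o^-(H+X),\]
where the middle step now uses $Y=X+*$. Normal-play equality is quantified over all games, so it does not matter whether $X+*$ belongs to $\mathscr{A}$. Since $X$ was arbitrary, $G\equiv_{\mathscr{A}}H$.

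I do not expect a real obstacle; the only delicate point is bookkeeping. The evil twin identity $o^-(K)=o^+(K^*)$ may only be applied to games $K\in\mathscr{A}$, which is why it is applied to $G+X$ and $H+X$ and never to $G+X+*$. The argument also implicitly assumes $G$ and $H$ themselves lie in $\mathscr{A}$, so the phrase ``both firm or both fickle'' should be read relative to the kernel $\mathscr{K}$ of the evilly normal pair $(\mathscr{A},\mathscr{K})$. I would state this interpretation explicitly at the start of the proof.
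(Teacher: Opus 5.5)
Your proposal is correct and follows the paper's proof essentially step for step. Like the paper, you read firm/fickle as $\mathscr{K}$ versus $\mathscr{A}\setminus\mathscr{K}$, use evil normality to put $G+X$ and $H+X$ on the same side of the kernel, and then chain $o^-\to o^+\to o^+\to o^-$, inserting the extra $*$ when the sums lie outside $\mathscr{K}$. The only difference is cosmetic: you split directly on whether $G+X\in\mathscr{K}$, whereas the paper first splits on $G,H$ and then on $X$.
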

\begin{proof}
    Suppose $G,H \in \mathscr{K}$. Then for any $X \in \mathscr{A}$ we have $G+X,H+X \in \mathscr{K}$, so $o^-(G+X) = o^+(G+X) = o^+(H+X) = o^-(H+X)$, where the first and third equalities follow from the definition of $\mathscr{K}$, and second equality follows from the definition of normal play equality.

    Suppose $G,H \in \mathscr{A}\setminus \mathscr{K}$. For any $X \in \mathscr{A}$, either $X \in \mathscr{K}$ or $X \in \mathscr{A} \setminus\mathscr{K}  $. If $X \in \mathscr{K}$, then $G+X \in \mathscr{K},H+X \in \mathscr{K}$, and the proof continues as before. If $X \in \mathscr{A} \setminus \mathscr{K}$, then $G+X,H+X \in \mathscr{A}\setminus \mathscr{K}$, so $o^- (G+X) = o^+(G+X+*) = o^+(H+X+*) = o^- (H+ X)$. 

    Thus $G \equiv_{\mathscr{A}} H$ in all cases.
\end{proof}

Applying the above theorem in the case $G \equiv *$ gives the following:

\begin{thm}
    Let $H =*$ in normal play. Then if $H \in \mathscr{A}\setminus \mathscr{K}$, then $H \equiv_{\mathscr{A}} *$ and for all $G \in \mathscr{A} \setminus \mathscr{K} $, we have $o^-(G+H) = o^+(G)$ and $o^+(G+H) = o^-(G)$.
\end{thm}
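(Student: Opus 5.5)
The plan is to apply the preceding theorem with $G \cong *$, taking as standing hypotheses that $(\mathscr{A},\mathscr{K})$ is evilly normal and that $* \in \mathscr{A}$; the latter is part of the definition of the evil twin property. First I check that $* \notin \mathscr{K}$. If $*$ were in $\mathscr{K}$, evil normality would put $* + *$ in $\mathscr{K}$, and then $\mathscr{K}$ would contain a game whose normal and mis\`ere outcomes differ. Concretely, since $\mathscr{A}$ is hereditarily closed and contains $*$, it contains $0$, and $o^+(0) = \mathcal{P} \neq \mathcal{N} = o^-(0)$, so $0 \notin \mathscr{K}$. Evil normality then forces $A + 0 = A \notin \mathscr{K}$ whenever $A \notin \mathscr{K}$, which is consistent. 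For $*$ itself, $o^+(*) = \mathcal{N}$ and $o^-(*) = \mathcal{P}$ differ, so $*^* = *$ would fail the evil twin condition. Hence $* \notin \mathscr{K}$.

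With $*$ and $H$ both in $\mathscr{A}\setminus\mathscr{K}$ and $H = *$ in normal play, the preceding theorem (the both-fickle case) gives $H \equiv_{\mathscr{A}} *$ immediately.

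For the outcome identities, take $G \in \mathscr{A}\setminus\mathscr{K}$. By evil normality, $G + H \notin \mathscr{K}$, so $(G+H)^* = G + H + *$, and the evil twin property gives
\[ o^-(G+H) = o^+(G+H+*). \]
In normal play $H + * = * + * = 0$, so $o^+(G+H+*) = o^+(G)$. This yields $o^-(G+H) = o^+(G)$.

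For the other identity, use the evil twin property on $G$ itself: since $G \notin \mathscr{K}$, we have $o^-(G) = o^+(G+*)$. Normal-play equality $H = *$ gives $o^+(G+*) = o^+(G+H)$, hence $o^+(G+H) = o^-(G)$.

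The main obstacle is justifying $* \notin \mathscr{K}$, and with it the use of evil normality. I read these as the standing hypotheses of this section, since the statement is framed as a consequence of the previous theorem. The check that $*$ lies outside the kernel is the direct computation of $o^\pm(*)$ given above.
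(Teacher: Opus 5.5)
Your proposal is correct and takes the paper's route: the paper's whole proof is to apply the preceding theorem with $G \cong *$. You also supply details the paper leaves implicit. These are that $* \in \mathscr{A}\setminus\mathscr{K}$, for which the direct check $o^+(*) = \mathcal{N} \neq \mathcal{P} = o^-(*)$ suffices, so the detour through $*+*$ and $0$ can be dropped. The other is the two outcome identities, which you get from evil normality and $H + * = 0$ in normal play.
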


Now we generalize the second condition.
\begin{defn}
    A set of games $\mathscr{A}$ is \emph{star-closed} if for any $G \in \mathscr{A}$, there exists $H \in \mathscr{A}$ such that $G +*= H$ in normal play.
\end{defn}
 It is important to remember that the $=$ sign refers to equality in normal play. For example, the set $\{*4,*5\}$ is star-closed, because $*4+* = *5$ and $*5+* = *4$ in normal play, even though neither of these equalities are true in mis\`ere play. 

 We are now ready to state the generalization of the proof of Theorem \ref{lo_thm}.

\begin{thm}
\label{a-k extension}
Let $(\mathscr{A},\mathscr{K})$ be evilly normal. Let $\mathscr{B}$ be an additively closed set of dicotic games such 
that $\mathscr{A} \cup \mathscr{B}$ is hereditarily closed. Suppose that for any $G \in \mathscr{B} \setminus \mathscr{A}$, $G^\mathcal{R} \cap (\mathscr{A}\setminus\mathscr{K}) $ and $G^\mathcal{L} \cap (\mathscr{A}\setminus\mathscr{K}) $ are star-closed. Then $(\cl(\mathscr{A}\cup \mathscr{B}),(\cl(\mathscr{A}\cup \mathscr{B})) \setminus (\mathscr{A} \setminus \mathscr{K}))$ is evilly normal.
\end{thm}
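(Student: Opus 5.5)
The plan is to check two things for the pair $(\mathscr{C},\mathscr{K}')$, where $\mathscr{C}=\cl(\mathscr{A}\cup\mathscr{B})$ and $\mathscr{K}'=\mathscr{C}\setminus(\mathscr{A}\setminus\mathscr{K})$. First, evil normality: $G+H\in\mathscr{A}\setminus\mathscr{K}$ if and only if $G,H\in\mathscr{A}\setminus\mathscr{K}$. Second, that $\mathscr{K}'$ is an evil kernel: every $G\in\mathscr{K}'$ satisfies $o^+(G)=o^-(G)$, and every $G\in\mathscr{A}\setminus\mathscr{K}$ satisfies $o^+(G)=o^-(G+*)$ and $o^-(G)=o^+(G+*)$.

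\textbf{Step 1: describe $\mathscr{C}$.} Since $\mathscr{A}\cup\mathscr{B}$ is hereditarily closed, and a subposition of a sum is a sum of subpositions, every element of $\mathscr{C}$ is a finite sum of elements of $\mathscr{A}\cup\mathscr{B}$. Grouping terms, each $G\in\mathscr{C}$ can be written as $G=A_0+B_1+\dots+B_k$ with $A_0\in\mathscr{A}$ and each $B_i\in\mathscr{B}\setminus\mathscr{A}$. Every element is also dicotic.

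\textbf{Step 2: evil normality.} If $G,H\in\mathscr{A}\setminus\mathscr{K}$, then $G+H\in\mathscr{A}\setminus\mathscr{K}$ by the evil normality of $(\mathscr{A},\mathscr{K})$. For the converse, suppose, say, $G\notin\mathscr{A}\setminus\mathscr{K}$. If $G\in\mathscr{K}$ and $H\in\mathscr{A}$, normality of $(\mathscr{A},\mathscr{K})$ puts $G+H$ in $\mathscr{K}$. If $G$ or $H$ has a summand from $\mathscr{B}\setminus\mathscr{A}$, the sum is not in $\mathscr{A}$ at all. This last point is a delicate bookkeeping issue. Membership in $\mathscr{A}$ is a statement about forms, so I will need the convention, implicit in the statement, that a sum of forms lies in $\mathscr{A}$ only when its summands do. I expect to have to state this explicitly.

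\textbf{Step 3: the kernel property by induction on birthday.} Games in $\mathscr{A}\setminus\mathscr{K}$ and games in $\mathscr{K}$ are already handled by the hypothesis that $\mathscr{K}$ is an evil kernel of $\mathscr{A}$. Note that $0\in\mathscr{A}\setminus\mathscr{K}$ is forced, since $o^+(0)\ne o^-(0)$. So take $G\in\mathscr{K}'\setminus\mathscr{A}$, which has a summand $X\in\mathscr{B}\setminus\mathscr{A}$; then $G\not\cong 0$, and since $G$ is dicotic both players have options. Following the proof of Theorem~\ref{lo_thm}, I will pair every Left option $G^L$ with a Left option ${G^L}'$ satisfying
\[
o^+(G^L)=o^-({G^L}'), \qquad o^-(G^L)=o^+({G^L}'),
\]
and do the same for Right. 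This matching of option outcomes across conventions forces $o^+(G)=o^-(G)$. There are two cases.
\begin{enumerate}
\item If $G^L\in\mathscr{K}'$, take ${G^L}'=G^L$ and use the induction hypothesis.
\item If $G^L\in\mathscr{A}\setminus\mathscr{K}$, Step 2 forces every summand of $G^L$ into $\mathscr{A}\setminus\mathscr{K}$. Hence $G=Y+X$ with $Y\in\mathscr{A}\setminus\mathscr{K}$, a single summand $X\in\mathscr{B}\setminus\mathscr{A}$, and the move was $X\to X^L\in X^{\mathcal{L}}\cap(\mathscr{A}\setminus\mathscr{K})$. By star-closedness there is $X^{L'}\in X^{\mathcal{L}}\cap(\mathscr{A}\setminus\mathscr{K})$ with $X^{L'}=X^L+*$ in normal play. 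Set ${G^L}'=Y+X^{L'}$, which lies in $\mathscr{A}\setminus\mathscr{K}$ and equals $G^L+*$ in normal play.
\end{enumerate}
In the second case, the evil twin property for $\mathscr{A}\setminus\mathscr{K}$ gives
\[
o^-(G^L)=o^+(G^L+*)=o^+({G^L}')
\]
and
\[
o^+(G^L)=o^+({G^L}'+*)=o^-({G^L}'),
\]
exactly as in the proof of Theorem~\ref{lo_thm}. Right options are symmetric.

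The main obstacle is the combination of Step 2 and the second case of Step 3: making rigorous that a game of $\mathscr{C}$ lying in $\mathscr{A}\setminus\mathscr{K}$ decomposes only into $\mathscr{A}\setminus\mathscr{K}$ pieces. That decomposition is what guarantees the move was made inside a single $\mathscr{B}\setminus\mathscr{A}$ summand, so that star-closedness applies. The outcome computations themselves are routine.
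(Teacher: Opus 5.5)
Your proposal is correct and is essentially the paper's argument: induction on birthday, with each option landing in $\mathscr{A}\setminus\mathscr{K}$ paired, via star-closedness, with a twin option equal to it plus $*$ in normal play. Your explicit check of the normality condition fills a step the paper leaves unstated, and the bookkeeping ``convention'' you flag needs no extra assumption: every short $H$ has $0$ as a subposition, so $G\cong G+0$ is a subposition of $G+H$, and hereditary closure of $\mathscr{A}$ already gives $G+H\in\mathscr{A}\Rightarrow G,H\in\mathscr{A}$.
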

Notice that the complement of the evil kernel remains the same after applying Theorem \ref{a-k extension}.

\begin{proof}[Proof of Theorem \ref{a-k extension}]
     All elements of $\text{cl}(\mathscr{A}\cup \mathscr{B})$ have the form $G \cong A + B$, where $A \in \mathscr{A}$ and $B \in\mathscr{B} \cup \{0\}$ We want to show that $o^-(G) = o^+(G)$ unless $A \in \mathscr{A} \setminus \mathscr{K}$ and $B \cong 0$. We induct on the birthday of $G$. The base case, $A+B \cong 0$, is trivial. We already understand the $B \cong 0$ case, so we can assume that $B \not \cong 0$. We will prove that a winning move in $A+B$ under normal play corresponds to a winning move in mis\`ere play, and vice versa. 
     
     Up to symmetry, we only need to show this for Left. Say that $G \cong A + B$. The Left options of $G$ are of the form $A^L +B$ or $A+B^L$. By induction, since $B \not \cong 0$, $o^-(A^L+B) = o^+(A^L+B)$. So, if Left wins by moving to $A^L+B$ under either play convention, she wins in both. Now consider the case where Left moves to $A+B^L$. If $B^L \in \mathscr{B} \cup  \mathscr{K}$, $o^-(A+B^L)=o^+(A+B^L)$, and so Left wins under either play convention. If $B^L \in \mathscr{A} \setminus \mathscr{K}$, then there exists another Left option of $B$, say ${B^L} '$, such that ${B^L}' = B^L+*$ in normal play. Then we calculate $o^+({B^L}') = o^+(B^L + *) = o^-(B^L+*+*) = o^-(B^L)$ and $o^-({B^L}') =o^+({B^L}' + *) = o^+(B^L+*+*)=o^+(B^L) $. So $B^L$ being a winning move under one convention implies that ${B^L}'$ is a winning move under the other convention. Thus, $o^+(A+B) = o^-(A+B)$ whenever $B \not \cong 0$. 
\end{proof}

Theorem \ref{a-k extension} only allows adding elements to $\mathscr{K}$. The next theorem is for adding elements to $\mathscr{A}\setminus\mathscr{K}$.

\begin{thm}
\label{add to a- (a-k)}
   Let $(\mathscr{A},\mathscr{K})$ be dicotic and evilly normal with the property that $A^\mathcal{L} \cap (\mathscr{A}\setminus\mathscr{K})$ and $A^\mathcal{R} \cap (\mathscr{A}\setminus\mathscr{K})$ are star-closed for any $A \in \mathscr{K}$ and that $A^{\mathcal{L}}\cap \mathscr{K}$ and $A^{\mathcal{R}}\cap \mathscr{K}$ are star-closed for any $A \in (\mathscr{A} \setminus \mathscr{K})$.
    
Let $\mathscr{B}$ be a set of games such that $\mathscr{A} \cup \mathscr{B}$ is hereditarily closed. Assume that for all $B \in \mathscr{B}$, $B^\mathcal{L{}} \cap \cl(\mathscr{K} \cup \mathscr{B})$ and $B^\mathcal{L{}} \cap \cl(\mathscr{K} \cup \mathscr{B})$ are star-closed.

Furthermore, assume that if either player wins by going first in $G+*$ to $G$ for any $G\in \ac( (\mathscr{A}\setminus \mathscr{K} )\cup \mathscr{B})) $ under either play convention, there exists another winning move for that player in $G+*$ in the same play convention.
   
   Then $(\cl(\mathscr{A}\cup \mathscr{B}), \ac(\mathscr{K{ \cup \mathscr{B}}}))$ is evilly normal.
\end{thm}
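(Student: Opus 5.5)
We write $\mathscr{K}' = \ac(\mathscr{K}\cup\mathscr{B})$ and $\mathscr{C} = \cl(\mathscr{A}\cup\mathscr{B})$. Since $\mathscr{A}\cup\mathscr{B}$ is hereditarily closed, every $G\in\mathscr{C}$ can be written as $G\cong A+K+B$. Here $A$ is a sum of elements of $\mathscr{A}\setminus\mathscr{K}$ (possibly $0$), $K$ is a sum of elements of $\mathscr{K}$, and $B$ is a sum of elements of $\mathscr{B}$. Then $G\notin\mathscr{K}'$ exactly when $K\cong 0$ and $B\cong 0$, so $G\in \ac(\mathscr{A}\setminus\mathscr{K})$.

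The evil-normality condition on sums is then immediate from this description, using that $(\mathscr{A},\mathscr{K})$ is evilly normal. A sum lies outside $\mathscr{K}'$ iff every summand has no $\mathscr{K}$- or $\mathscr{B}$-part. What remains is the evil twin property itself. For $G\notin\mathscr{K}'$ we have $G\in\mathscr{A}\setminus\mathscr{K}$, and the property is inherited from $(\mathscr{A},\mathscr{K})$. So the real content is to prove $o^+(G)=o^-(G)$ for every $G\in\mathscr{K}'$, i.e.\ whenever $K+B\not\cong 0$. We also need the complementary statement $o^+(G+*)=o^-(G)$ for $G\notin\mathscr{K}'$, but that is the hypothesis on $\mathscr{A}$.

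We induct on birthday, following the template of Theorem \ref{a-k extension}. By symmetry we consider only Left's moves from $G\in\mathscr{K}'$, and show that each normal-play winning move has a misère counterpart of the same outcome, and vice versa. A Left option $G^L$ either stays in $\mathscr{K}'$, in which case $o^+(G^L)=o^-(G^L)$ by induction and the same move works, or it leaves $\mathscr{K}'$. In the latter case the move was made in the last remaining $\mathscr{K}$- or $\mathscr{B}$-component, say $C$, which was moved to $C^L\in\mathscr{A}\setminus\mathscr{K}$. The case $C\in\mathscr{K}$ is handled by the hypothesis that $C^{\mathcal L}\cap(\mathscr{A}\setminus\mathscr{K})$ is star-closed. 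The case $C\in\mathscr{B}$ is handled by the star-closure hypothesis on options of $B$ (reading the second ``$B^{\mathcal L}$'' as $B^{\mathcal R}$). Either way it produces ${C^L}'$ with ${C^L}'=C^L+*$ in normal play. The computation from the proof of Theorem \ref{lo_thm} then gives $o^+(G^{L\prime})=o^-(G^L)$ and $o^-(G^{L\prime})=o^+(G^L)$, which exchanges winning moves between conventions.

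The main obstacle is that the replacement option ${C^L}'$ may not lie in $\mathscr{A}\setminus\mathscr{K}$: it could lie in $\mathscr{K}$ or in $\mathscr{B}$, or be a game equal to $C^L+*$ only in normal play. Its misère outcome is then governed by the kernel rule, not the twin rule, and the computation changes. This is where I expect to use the cross star-closure hypotheses, namely that $A^{\mathcal L}\cap\mathscr{K}$ is star-closed for $A\notin\mathscr{K}$ and conversely. I also expect to need the final hypothesis about moves from $G+*$ to $G$ for $G\in\ac((\mathscr{A}\setminus\mathscr{K})\cup\mathscr{B})$. The idea is to treat a stray $*$-like component as an actual $*$, and to invoke that hypothesis to show that a winning move which deletes this $*$ can be replaced by another winning move in the same convention. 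After the replacement, the induction hypothesis applies to a position whose kernel status is known. I would first try to reduce to the case where the outside-kernel part is exactly a single $*$, using the normal-play equality ${C^L}'=C^L+*$ together with evil normality. Then I would verify that the four combinations of (replacement in or out of kernel) $\times$ (normal or misère convention) all close up. This case check is the step most likely to need extra care or a slightly strengthened hypothesis.
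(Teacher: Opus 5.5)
There is a genuine gap, and it begins with which games you put in the new kernel. You take the kernel to be all sums with a $\mathscr{K}$-part or a $\mathscr{B}$-part, so your real target becomes $o^+(G)=o^-(G)$ whenever $B\not\cong 0$. The paper's proof does the opposite with $\mathscr{B}$. It writes $G=A+B$ with $A\in\mathscr{A}$ and $B\in\ac(\mathscr{B})\cup\{0\}$, and splits only on $A$. When $A\in\mathscr{K}$ it proves $o^+(G)=o^-(G)$. When $A\in\mathscr{A}\setminus\mathscr{K}$ it proves the twin relations $o^+(G)=o^-(G+*)$ and $o^-(G)=o^+(G+*)$, for arbitrary $B$. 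So the complement of the new kernel is $\ac((\mathscr{A}\setminus\mathscr{K})\cup\mathscr{B})$. This is why the theorem is introduced as the tool ``for adding elements to $\mathscr{A}\setminus\mathscr{K}$''. It is also why applying it with $\mathscr{B}=\mathscr{S}$ yields the kernel $\cl(\mathscr{S}\cup\mathscr{H})\setminus\cl(\mathscr{S})$ in Theorem \ref{big}. (Read literally, $\ac(\mathscr{K}\cup\mathscr{B})$ contains $B$ but in general not $A+B$ for $0\not\cong A\in\mathscr{A}\setminus\mathscr{K}$, which is incompatible with evil normality; your $\mathscr{K}'$ is not that set either.) Under your reading the target is false for exactly the games the theorem is built for. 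The sprig $*:1=\{0,*\mid 0\}$, one of the fickle wildflowers in $\mathscr{S}$, has $o^+(*:1)=\mathcal{N}$ but $o^-(*:1)=\mathcal{L}$, so it cannot lie in any evil kernel.

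The same misreading produces the obstacle you leave open. In the paper's setup, a move from a kernel position to a non-kernel one must be a move $A\to A^L$ with $A^L\in\mathscr{A}\setminus\mathscr{K}$. The hypothesis that $A^{\mathcal L}\cap(\mathscr{A}\setminus\mathscr{K})$ is star-closed then supplies a partner ${A^L}'$ that is again outside the kernel. So $o^-({A^L}'+B)=o^+({A^L}'+B+*)=o^+(A^L+B)$ closes at once, and a partner landing in the kernel never occurs. (The version you are attempting, with $\mathscr{B}$ added to the kernel, is essentially Theorem \ref{a-k extension}. Its hypothesis that $G^{\mathcal L}\cap(\mathscr{A}\setminus\mathscr{K})$ be star-closed is precisely what your obstacle calls for.)

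The other hypotheses serve the case $A\in\mathscr{A}\setminus\mathscr{K}$, which your plan never treats. Star-closure of $A^{\mathcal L}\cap\mathscr{K}$, and of the kernel-valued options of $B\in\mathscr{B}$, pairs each move into the kernel with a partner differing by $*$. The final hypothesis is used when converting a mis\`ere win in $G+*$ into a normal-play win in $G$: it lets one assume the winning move goes to some $G^L+*$ rather than deleting the added $*$. That is why you could not find a role for it: in your framing one never analyses $G+*$ for $G$ with a $\mathscr{B}$-part. Even on its own terms, your proposal defers the decisive case check, so it is not yet a proof.
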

\begin{proof}
    The elements of $\cl(\mathscr{A} \cup \mathscr{B})$ have form $G= A+B$, where $A \in \mathscr{A}$ and $B \in \mathscr{B}$. We will induct on the birthday of the normal-play canonical form of $G$. This is stronger than induction on the literal form of $G$ since we can assume the theorem is true for any game equal in normal play to an option of $G$.

    Say that $A \in \mathscr{K}$. Then we need to show that $o^+(G) = o^-(G)$. Assume that a player, say Left, wins $G$ going first in normal play. We will show that Left also wins $G$ going first under mis\`ere play.

Say that Left wins $G$ going first under normal play by moving to $G^L$, where $G^L \in \ac(\mathscr{K{ \cup \mathscr{B}}})$. Then by induction, $o^+(G^L) = o^-(G^L)$, so she wins $G$ going first under mis\`ere play with the same move. If Left wins $G$ going first under normal play by moving to $G^L \in \ac( (\mathscr{A}\setminus \mathscr{K} )\cup \mathscr{B})$, then it must be that $G^L = A^L+B$ for some $A^L \in \mathscr{A}\setminus \mathscr{K} $. Since $A^{\mathcal{L}} \cap (\mathscr{A} \setminus \mathscr{K})$ is star-closed, there exists a left option ${A^L}' \in \mathscr{A}\setminus\mathscr{K} $ such that ${A^L}' = A^L+*$ in normal play. Then we calculate that $o^-({A^L}' +B) = o^+({A^L}' + B + *) =o^+(A^L+*+B+*) = o^+(A^L+B)$, so Left wins going first under mis\`ere play by moving to ${A^L}'$.

The proof that Left wins $G$ going first under normal play if she wins $G$ going first under mis\`ere play is symmetric. If Left wins $G$ going first under mis\`ere play by moving to $G^L \in\ac(  \mathscr{K} \cup \mathscr{B})$, then just as before,  she wins $G$ going first under normal play with the same move. If Left wins $G$ going first under mis\`ere play by moving to $G^L  = A^L + B \in \ac( (\mathscr{A}\setminus \mathscr{K} )\cup \mathscr{B})$, then we calculate that she wins $G$ going first under normal play by moving to ${A^L}' = A^L +*$, since $o^+({A^L}' +B) = o^+(A^L+*+B) = o^-(A^L+*+B+*) = o^-(A^L+B)$. 

    Say that $A \in \mathscr{A} \setminus \mathscr{K}$. Then we need to show that $o^+(G) = o^-(G+*)$ and $o^-(G) = o^+(G+*)$. 

    We first show $o^+(G) = o^-(G+*)$. Say that Left wins $G$ moving first under normal play. If she wins by moving to $G^L \in \ac( (\mathscr{A}\setminus \mathscr{K} )\cup \mathscr{B})$, then she wins $G+*$ under mis\`ere play by moving to $G^L$ as well, since $o^+(G^L) = o^-(G^L+*)$. If Left wins $G$ moving first under normal play by moving to $G^L \in \ac( (\mathscr{A}\setminus \mathscr{K} )\cup \mathscr{B})$, then she either moved to $A^L + B$ with $A^L \in \mathscr{K}$, or $A+B^L$ with $B^L \in \ac( (\mathscr{A}\setminus \mathscr{K} )\cup \mathscr{B})$. But $A^{\mathcal{L}} \cap  \mathscr{K}$ and $B^{\mathcal{L}}  \cap \text{ac}(\mathscr{K} \cup \mathscr{B})$ are both star-closed, so either way there exists ${G^L}'$ such that ${G^L}' = G^L+*$ in normal play. Then $o^+({G^L}') = o^+(G^L+*) = o^-(G^L+*+*) = o^-(G^L)$, where the second equality is justified by the fact that we are inducting on the birthday of the normal play canonical form of $G$. Since $G^L+* = G^{L'}$, which is an option of $G$, the birthday of the normal play canonical form of $G^L +*$ is less than that of $G$, proving the induction is justified.

The other half of the proof is nearly symmetric. Say Left wins $G+*$ going first under mis\`ere play. By assumption, if $G \in \ac((\mathscr{A}\setminus\mathscr{K} )\cup 
\mathscr{B})$, then moving to $G$ is never the unique winning move in $G+*$ under either play convention for either player. So Left may win $G+*$ going first under mis\`ere play by moving to some $G^L+*$. If $G^L \in \ac( (\mathscr{A}\setminus \mathscr{K} )\cup \mathscr{B})$, then $o^-(G^L+*) =o^+(G^L)$, so Left wins $G$ going first in normal play by moving to $G^L$. If $G^L \in \ac(  \mathscr{K} \cup \mathscr{B})$, then there exists ${G^L}' \in\ac(  \mathscr{K} \cup \mathscr{B})$ such that ${G^L}' = G^L+*$. Then $o^+({G^L}') =o^+(G^L+*) = o^-(G^L+*)$, so Left wins $G$ going first under normal play by moving to ${G^L}'$.

The proof that $o^-(G) = o^+(G+*)$ is symmetric.
\end{proof}
\section{Wildflowers}
\label{wildflowers}
We now apply the theorems of the last section to find larger sets with the evil twin property.
\begin{defn}
A \emph{star-based} game is any game of form $*:H$.
\end{defn}
Both \cite{mckay2016misere} and \cite{lomisere} ask what other sets of star based games there are with the evil twin property. Note that not all games with the evil twin property are star based, as shown in Example \ref{star based counterexample}.

We will find a set of games with the evil twin property that includes many star based games. The games we consider will all be of the following form. 
\begin{defn}
    A \emph{wildflower} is a game of form $G:H$, where $G$ is a impartial game and $H$ can be any game. 
\end{defn}
\begin{defn}
    A wildflower $G:H$ is \emph{tame} if $G$ is tame.
\end{defn}
\begin{defn}
    A tame wildflower $G:H$ is said to be \emph{fickle} if $G$ is fickle, and \emph{firm} if $G$ is firm.
\end{defn}

The fickleness or firmness of a tame wildflower can depend on how it is written. For example, $*2:1$ is firm, but it is literally equal to $*:(*:1)$, which is fickle. This will not affect our results, so we will ignore this issue.

Our main theorem will be that $\cl(\mathscr{S} \cup \mathscr{H})$ has the evil twin property with evil kernel $\cl(\mathscr{S}\cup \mathscr{H}) \setminus \ac(\mathscr{S})$, where $\mathscr{H}$ and $\mathscr{S}$ are subsets of the firm and fickle wildflowers, respectively, called the \emph{restricted} fickle wildflowers and \emph{restricted} firm wildflowers, which will be defined later in the section.

A subset of wildflowers we are particularly interested in are the mutant flowers. \begin{defn}
    A \emph{mutant wildflower} is a game of form $\{*n_1,\dots,*n_m\}:a$, where $a$ is a dyadic rational number.
\end{defn}
This is different from $*n:a$ because the ordinal sum depends on the literal form of the game, not the game up to equality. As a result of our main theorem, we will find the largest set of mutant wildflowers with the evil twin property.

\begin{defn}
    A fickle game $G$ is \emph{restricted fickle} if the following property holds: for all fickle subpositions $G'$ of $G$, the set of Left options $G'^{\mathcal{L}}$ is star closed, and the set of Right options $G'^{\mathcal{R}}$ is star closed.
\end{defn}

\begin{defn}
    $\mathscr{R}_i, i \in \{0,1\}$  is defined as the set of all short games $G$ with the following properties:

    \begin{enumerate}
        \item $G \not \in \mathcal{N}^+$.
        \item Let $G^{\mathcal{L}} \cap \mathcal{N}^+ =\{ G_1 ,\dots G_n \}$. Then $\{ *\mathcal{G}^+(G_1+i), \dots,* \mathcal{G}^+(G_n + i) \}$ is star closed. 
        \item Let $G^{\mathcal{R}} \cap \mathcal{N}^+ =\{ G_1 ,\dots G_n \}$. Then $\{ *\mathcal{G}^+(G_1+i), \dots,*\mathcal{G}^+(G_n + i) \}$ is star closed. 
    \end{enumerate}
    \end{defn}
    
\begin{defn}
    A \emph{restricted fickle} wildflower is a game of form $G: H$, where $G$ is restricted fickle and $H \in \mathscr{R}_{\mathcal{G}^+(G)}$.
\end{defn}

Notice that $\mathcal{G}^+(G) \in \{0,1\}$, if $G:H$ is a restricted fickle wildflower, since $G$ is fickle.
These restrictions are necessary to allow us to apply Theorem \ref{add to a- (a-k)} on the set of all restricted fickle games.
We need a similar definition for restricted firm wildflowers, so that the set of sums of restricted fickle and firm wildflowers are closed.

\begin{defn}
    A firm game $G$ is \emph{restricted firm} if every fickle subposition $G'$ of $G$ is restricted fickle.
\end{defn}

\begin{defn}
     A \emph{restricted firm wildflower} is a game of form $G: H$, where $G$ is restricted firm wildflower and $H$ can be any game.
\end{defn}

Now that we have defined all the terms of Theorem \ref{main theorem 1, minus teminology}, we can restate it in terms of the notation of Section \ref{3}.
\begin{thm}
\label{big}
 Let $\mathscr{H}$ be the set of all restricted firm wildflowers, and $\mathscr{S}$ be the set of all restricted fickle wildflowers. Then $(\text{cl}(\mathscr{S} \cup \mathscr{H}),\text{cl}(\mathscr{S} \cup \mathscr{H}) \setminus \cl(\mathscr{S}))$ is evilly normal.
\end{thm}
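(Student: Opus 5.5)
The plan is a two-stage bootstrap: first show that $\cl(\mathscr{S})$ carries the evil twin property with empty kernel, then add the restricted firm wildflowers with Theorem \ref{a-k extension}, which leaves the complement of the kernel equal to $\cl(\mathscr{S})$. The empty kernel means $o^+(G)=o^-(G+*)$ and $o^-(G)=o^+(G+*)$ for every $G\in\cl(\mathscr{S})$.

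For the first stage I would not invoke Theorem \ref{add to a- (a-k)} directly. Instead I would run its inductive argument (induction on the birthday of the normal-play canonical form) with $\mathscr{A}\setminus\mathscr{K}$ replaced by $\ac(\mathscr{S})$, noting that $\cl(\mathscr{S})=\ac(\mathscr{S})$ once hereditary closure is checked. Hereditary closure is the first lemma. A subposition of a restricted fickle wildflower $G:H$ is either $G':H'$ with $H'$ a subposition of $H$, or a subposition of $G$ itself. The impartial subpositions of $G$ lie in the tame impartial (fickle) world, where Theorem \ref{impartial_evil} already gives the property. For the $G:H'$ subpositions I would need that $\mathscr{R}_i$ is closed under taking options that still lie in $\mathscr{R}_i$. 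Where this fails, I would restrict attention to the hereditary closure and observe that those positions are handled by the same options argument.

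The key step is the inductive one, and it has two parts.
\begin{enumerate}
\item \emph{Pairing of options.} For a sum $X=\sum G_j:H_j$ and a Left option, I need a companion option equal to it plus $*$ in normal play. If the move is inside some $G_j$, this comes from restricted fickleness: the option sets of fickle subpositions are star closed. If the move is $H_j\to H_j^L$, I use that $G_j:H_j^L$ equals $*\mathcal{G}^+(G_j)+H_j^L$ whenever the latter lies in $\mathcal{N}^+$, so that normal play sees a nimber-like summand. Condition (2)/(3) of $\mathscr{R}_i$ then supplies a sibling option differing by $*$. When $H_j^L\notin\mathcal{N}^+$, the ordinal sum is dominated by $H_j^L$ and I would argue directly that the move behaves identically under both conventions.
\item \emph{Moving to $G$ in $G+*$.} I must verify the hypothesis of Theorem \ref{add to a- (a-k)}: moving from $G+*$ to $G$ is never the unique winning move. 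Here condition (1), $H\notin\mathcal{N}^+$, together with fickleness of $G_j$ (genus $0^1$ or $1^0$, behaving like $0$ or $*$) should let the mover instead reverse a component from $G_j$ into the complementary value. That move has the same normal-play effect as removing the $*$.
\end{enumerate}
Part 2 is the step I expect to be the main obstacle. The restriction $H\notin\mathcal{N}^+$ is exactly what makes it go through, and the delicate case is when every $H_j$ is in $\mathcal{P}^+$.

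For the second stage, let $\mathscr{B}=\ac(\mathscr{H})$. This set is dicotic and additively closed, and $\cl(\mathscr{S})\cup\ac(\mathscr{H})$ together with mixed sums is hereditarily closed, because every fickle subposition of a restricted firm $G$ is restricted fickle. The remaining hypothesis of Theorem \ref{a-k extension} is that, for $B\in\mathscr{B}$, the options of $B$ landing in $\cl(\mathscr{S})$ form a star-closed set. Such options arise only by moving a firm $G$ (genus $n^n$, $n\ge2$) to a fickle option. Since $G$ has genus $n^n$ with $n\geq 2$, $G$ has options of normal Grundy values $0$ and $1$; I would check that among these, options that are fickle with value $0$ are paired with fickle options of value $1$. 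This pairing is the analogue of the "tall flower to $0$ or $*$" move in the proof of Theorem \ref{lo_thm}, and the reverse direction holds symmetrically. Theorem \ref{a-k extension} then yields evil normality of $(\cl(\mathscr{S}\cup\mathscr{H}),\cl(\mathscr{S}\cup\mathscr{H})\setminus\cl(\mathscr{S}))$.
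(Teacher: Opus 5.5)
Your first stage asserts something false, and everything after it depends on it. The set $\cl(\mathscr{S})$ cannot have the evil twin property with empty kernel, because restricted fickle wildflowers have firm subpositions. These come from two sources: the firm options of the fickle base $G$, and the positions $G:H'$ with $H'\in\mathcal{N}^+$, which the paper treats as kernel positions.

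Here is a concrete case. By the paper's own criterion for mutant flowers, $\{0,*2,*3\}:1\in\mathscr{S}$, since the base has genus $1^0$ and $\{*2,*3\}$ is star-closed. Its Left option $*2$ is firm, so $*2+*2\in\cl(\mathscr{S})$. Yet $o^+(*2+*2)=\mathcal{P}$ while $o^-(*2+*2+*)=\mathcal{N}$, because this is mis\`ere nim with a heap of size $2$ and nim-sum $1$.

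So your remark that the impartial subpositions of $G$ stay in the fickle world is wrong, and hereditary closure fails in the sense you need. The positions you propose to handle ``by the same options argument'' are exactly the ones that must satisfy $o^+=o^-$ rather than the $*$-shifted relation. (Accordingly, the paper's proof concludes with $\ac(\mathscr{S})$, not $\cl(\mathscr{S})$, as the complement of the kernel.) Your second stage then collapses too, since it feeds $(\cl(\mathscr{S}),\varnothing)$ into Theorem \ref{a-k extension} as an evilly normal base.

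The missing idea is the order of construction and what the restriction hypotheses are for. The paper builds the kernel first. It starts from the impartial restricted tame games, with the firm ones as kernel (genus theory). It then adds $\mathscr{H}$ to the kernel by Theorem \ref{a-k extension}. Only after that does it add $\mathscr{S}$ to the non-kernel side by Theorem \ref{add to a- (a-k)}.

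The essential hypothesis of that last theorem is that, from a non-kernel position, the moves \emph{into the kernel} form star-closed sets. Restricted fickleness supplies this for moves to firm options of $G$. Conditions (2)--(3) of $\mathscr{R}_{\mathcal{G}^+(G)}$, together with the colon principle, supply it for moves to $G:H'$ with $H'\in\mathcal{N}^+$. Lemma \ref{star moves suboptimal} handles the move $G+*\to G$.

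The pairing is needed for the following reason. For a kernel position $X^L$, moving to $X^L+*$ in $X+*$ gives $o^-(X^L+*)=o^+(X^L+*)$, which in general differs from $o^+(X^L)$. One moves instead to ${X^L}'+*$ with ${X^L}'=X^L+*$ in normal play, which yields $o^+(X^L)$. In a genuinely empty-kernel induction no pairing would be needed at all, since $o^-(X^L+*)=o^+(X^L)$ would follow directly from the induction hypothesis. That you reached for conditions (2)--(3) is itself a sign that those targets belong to the kernel.
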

Notice that $\text{cl}(\mathscr{S} \cup \mathscr{H})$ does \emph{not} contain all the tame games, but only the restricted firm and fickle games, even though the set of all tame games has the evil twin property.

Theorem \ref{big} can explain why $(\mathscr{A},\mathscr{K})$ is evilly normal even when $\mathscr{A}$ is not a subset of $\text{cl}(\mathscr{S} \cup \mathscr{H})$. For example, we have shown that $(\cl (\{0,* \mid *\} , \{*\mid 0,*\}, *2:1, *2:-1 ,), \cl (\{0,* \mid *\} , \{*\mid 0,*\}, *2:1, *2:(-1),) \setminus \cl (\{0,* \mid *\} , \{*\mid 0,*\}))$ is evilly normal. Although $\{ *\mid 0,*\}$ is not literally equivalent a tame wildflower, the equivalence $*+* \equiv_{\mathcal{D}} 0$ implies that $\{*\mid0,*\} \equiv_{\mathcal{D}}\{*\mid*+*,*\} \equiv (*+*):(-1) $ and $\{*\mid0,*\} \equiv_{\mathcal{D}} (*+*):1$. Since $*+*$ is a restricted fickle game, $\{*\mid 0,*\}$ and $\{0,*\mid*\}$ behave like restricted fickle wildflowers, which proves that the set is evilly normal.

    \section{Results on Wildflowers}
\label{5}
Now we begin to prove the main theorem of our paper. This will require several theorems specific to sums of wildflowers.
We begin with Theorem 2.6 from \cite{mckay2016misere}. 

\begin{thm}
  \label{ord_sum_ord}
    Let $G$ have a left and a right option. Then 
    \begin{enumerate}
        \item   If $H \in \mathcal{L}^+ \cup \mathcal{P}^+$, then $G:H\geq^- G$.
        \item  If $H \in \mathcal{R}^+ \cup \mathcal{P}^+$, then $G:H \leq^- G$.
    \end{enumerate}
\end{thm}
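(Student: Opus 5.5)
The plan is a strategy-stealing (more precisely, strategy-transfer) argument. It proves part (1) directly; part (2) then follows by swapping the roles of Left and Right. To show $G:H \geq^- G$ we must show $o^-(G:H+X) \geq o^-(G+X)$ for every game $X$. That amounts to two claims:
\begin{itemize}
\item whenever Left wins $G+X$ under mis\`ere play moving first, she wins $G:H+X$ under mis\`ere play moving first;
\item the same holds with Left moving second.
\end{itemize}
So fix a mis\`ere winning strategy $\sigma$ for Left in $G+X$, and build from it a strategy for Left in $G:H+X$.

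The strategy tracks a \emph{shadow} position in $G+X$. As long as the base of the ordinal sum has not been touched, the real position is $G:H'+X'$ and the shadow position is $G+X'$, where $H'$ is a subposition of $H$ from which Left wins moving second in normal play. Initially $H'=H$, which is allowed since $H\in\mathcal{L}^+\cup\mathcal{P}^+$. Left responds to Right's moves as follows.
\begin{itemize}
\item If Right moves in $X'$, or moves in the base to some $G^R$ (which discards $H'$), the same move is legal in the shadow game. Left answers with $\sigma$'s reply, performed identically in the real game.
\item If Right moves in the $H'$ part, to $G:H'^R$, the shadow position is unchanged. Left answers inside the $H$-part with her normal-play winning reply $H'^{RL}$. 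Such a reply exists because $H'$ is a Left-second-win in normal play, and $H'^{RL}$ is again a normal-play Left-second-win, so the invariant is preserved.
\end{itemize}
Left herself never initiates a move in the $H$-part. Once anyone moves in the base of $G:H'$, the real and shadow positions coincide, and Left simply follows $\sigma$ from then on. All games are short and every move is made in a component of finite birthday, so play terminates.

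The main step is to check the end of the game: Left must not get stuck, and Right must not escape, in a way that does not happen in the shadow game. The key is the hypothesis that $G$ has both a Left and a Right option.
\begin{itemize}
\item Left's side. Left wins under $\sigma$ either because the game ends with Left to move and no Left option in the shadow position, or because Right makes a move into a position that $\sigma$ handles. In the first case, if the component $G:H'$ were still present, the shadow would contain the component $G$, which has a Left option, so Left would not be stuck in the shadow. Hence at that moment the base has already been played, the real and shadow positions coincide, and Left also has no move in the real game. So Left wins the real game under mis\`ere play.
\item Right's side. While the component $G:H'$ survives, Right always has a move in the real game, namely his base option $G^R$. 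So Right can never win in the real game by being out of moves there while having a move in the shadow, except through the $H'$-part. Those moves Left always answers, so the shadow game is never forced to end while the real game continues in Right's favour.
\end{itemize}
Therefore every terminal outcome of the real game mirrors a terminal outcome of the shadow game under $\sigma$, and Left wins.

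I expect the main obstacle to be precisely this bookkeeping of when the ordinal-sum component is collapsed, together with the fact that moves in the $H$-part never desynchronise the turn order between the real and shadow games. This works because each Right move in $H$ is immediately cancelled by a Left reply there. The existence of that reply at every stage needs $H \in \mathcal{L}^+\cup\mathcal{P}^+$ and not merely $o^+(H)\ge\mathcal{N}$. Part (2) is the mirror image, with Right answering Left's moves in $H$ using his normal-play second-player win in $H\in\mathcal{R}^+\cup\mathcal{P}^+$.
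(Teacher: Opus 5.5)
Your proposal is correct and uses the same strategy-transfer argument as the paper. Left follows her mis\`ere winning strategy for $G+X$ and answers each Right move inside $H$ with a normal-play winning reply there; because $G$ has both a Left and a Right option, play cannot end before the base of $G:H$ is moved, after which the real and shadow positions coincide. Your version just spells out the bookkeeping that the paper's short proof leaves implicit, namely the invariant on $H'$, the preserved turn order, and the terminal-position check.
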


  \begin{proof}
      Up to symmetry, it is only necessary to show the first part. Assume that Right loses moving first in $H$, that is, $H \in \mathcal{L}^+ \cup \mathcal{P}^+$. We only need to show that $G:H \geq^- G$. Let $X$ be any game. We will show that $o^-(G:H+X) \geq o^-(G+X)$.

      If Left wins going first or second in $G+X$, she can win in $G:H+X$ with the following strategy: If Right plays in $H$, play according to her winning strategy in $H$, else play according to the original strategy for $G+X$. Since $G$ has Left and Right moves, a move in $G:H$ must be made eventually, and then Left will be left with a position she could have also gotten following her original strategy in $G+X$.
\end{proof}
We apply this to wildflowers.
\begin{defn}
    A wildflower $G:H$ is said to be \emph{blue} if $H \in \mathcal{L}^+$, and \emph{red} if $H \in \mathcal{R}^+$. Blue and red flowers are collectively referred to as \emph{colorful} wildflowers.
\end{defn}
Theorem \ref{ord_sum_ord} tells us that $L \cong G:H \geq^- *n$ if $L$ is a blue wildflower, and $R\cong G:H \leq^- G$ if $R$ is a red wildflower, and $G:H =^- *n$ if $H \in \mathcal{P}^+$. These are all the outcome classes of games in $\mathscr{R}_i$. These properties are enough to prove a generalized mis\`ere version of the blue flower ploy proven on page 199 of \cite{WinningWays}. 

\begin{thm}[Blue Flower Ploy]
  \label{atomic_weight_og}
    Let $G $ be a sum of flowers and nimbers. Let the number of blue flowers be $\ell$, and the number of red flowers be $r$. Then,
\begin{enumerate}
    \item  if $\ell-r \geq 2$, then $G \in \mathcal{L}^+$,
    \item if $\ell-r \geq 1$, then $G \in \mathcal{L}^+ \cup\mathcal{N}^+$, 
    \item  if $\ell-r \leq -1$, then $G \in \mathcal{R}^+ \cup\mathcal{N}^+$, 
    \item if $\ell-r \leq-2$, then ${G}\in\mathcal{R}^+$.
\end{enumerate}
\end{thm}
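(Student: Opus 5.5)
I would prove all four parts simultaneously by induction on the total birthday of $G$. By symmetry (conjugating swaps blue and red flowers and swaps $\mathcal{L}^+$ with $\mathcal{R}^+$) it suffices to prove (1) and (2). Write $G = \sum_i F_i + N$, where each $F_i \cong G_i:H_i$ is a wildflower with impartial base $G_i$ and $N$ is a sum of nimbers. I would first record the normal-play analogue of Theorem \ref{ord_sum_ord}: if $G_i$ has options and $H_i \in \mathcal{L}^+\cup\mathcal{P}^+$, then $G_i:H_i \ge G_i$ in normal play. The proof is the same delayed-response strategy: answer Right's moves in $H_i$ inside $H_i$, and wait for someone to move in the base. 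Hence a blue flower is at least its impartial base, and a red flower is at most its base.

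A move in a flower falls into one of three types. A move in the base $G_i$ replaces the whole flower by an impartial game, namely an option of $G_i$. A move in the stalk $H_i$ replaces $H_i$ by an option and may change the flower's colour. A move in $N$ is a nim move. Before running the induction I would weaken the hypotheses so that they survive these moves. I would allow flowers of every colour, with stalks in $\mathcal{P}^+$ and $\mathcal{N}^+$ counting zero toward $\ell-r$, and allow arbitrary impartial summands in place of nimbers. Since every impartial game equals a nimber in normal play, the latter costs nothing.

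For (1), with $\ell-r\ge 2$ and Right moving first, I would give Left a reply to each type of Right move.
\begin{itemize}
\item If Right moves in the base of a blue flower, that flower disappears and $\ell-r\ge 1$ with Left to move, so part (2) applies by induction.
\item If Right moves in the stalk $H_i$ of a blue flower, Left answers in $H_i$ with her winning reply, which keeps $H_i$ in $\mathcal{L}^+$ or moves it to $\mathcal{P}^+\cup\mathcal{L}^+$. This is where the choice of reply needs care: I would aim for $H_i^{RL}\in\mathcal{L}^+\cup\mathcal{P}^+$, and then induct on the stronger statement that uses the inequality blue $\ge$ base.
\item If Right moves in a red flower (base or stalk), $\ell-r$ can only increase, and Left answers by applying induction to the smaller game after a suitable reply.
\item If Right moves in the impartial part, Left replies with (2) by induction, since $\ell-r\ge 2\ge 1$ and Left is to move.
\end{itemize}

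For (2), with $\ell-r\ge1$ and Left moving first, there are two cases.
\begin{itemize}
\item If $r\ge1$, Left moves in the base of a red flower to any option, killing it. Then $\ell-r\ge2$ with Right to move, and (1) applies by induction.
\item If $r=0$, Left uses blue $\ge$ base, so $G \ge \sum_i G_i + (\text{neutral flowers}) + N$, and she wants a good nim-type move in the base of one blue flower. If the nim-sum of everything other than one blue flower $F_j$ lies among the Grundy values of options of $G_j$, Left moves $G_j$ to that option. This leaves a position with at least $\ell-1$ blue flowers and an impartial remainder of nim-sum $0$, which by induction (a weakened form of (1) with the zero-sum case) is in $\mathcal{L}^+\cup\mathcal{P}^+$.
\end{itemize}

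The main obstacle is the case $r=0$ in (2) where no such base move exists, for example when the nim-sum is $0$ or exceeds the options of every base. There I would instead show that Right moving first in the current position loses, via the delayed-reply argument: treat the blue flowers as "nimbers plus a positive reserve" and mirror Right's nim moves so that Right eventually must break a blue flower himself. Making this precise probably requires strengthening the induction hypothesis to the statement that $\ell-r\ge1$ together with nim-sum $0$ forces $G\in\mathcal{L}^+$. Pinning down exactly which strengthened invariant is closed under all three move types is the step I expect to be hardest.
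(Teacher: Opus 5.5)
Your overall skeleton is sound: simultaneous induction, the normal-play inequality $G_i:H_i\geq G_i$ for $H_i\in\mathcal{L}^+\cup\mathcal{P}^+$, and killing a red flower through its base when $r\geq1$ in (2). It is also a different route from the paper's. The paper gets all four parts at once: blue and red flowers have atomic weight $\pm1$ and nimbers weight $0$, so $\aw(G)=\ell-r$, and the two-ahead rule (Theorem~\ref{2aheadPloy}) finishes.

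\textbf{The gap.} The case $r=0$ of (2), which you flag yourself, is never proved. Looking for a winning move only among base moves is too narrow. In $*:1+*3$ the nim-sum is $2$ and no move in the base of $*:1$ works, yet Left wins by $*3\to*1$. The ``mirror until Right breaks a flower'' strengthening is neither formulated nor needed. The missing step follows directly from your own inequality. Let $s$ be the nim-sum of all base values $n_i$ and all nimbers.
\begin{itemize}
\item If $s\neq0$, some component admits a nim move to nim-sum $0$; this component is either a nimber or a base $*n_j$, which can move to every $*k$ with $k<n_j$. Afterwards every remaining flower is blue, so the position is $\geq$ the sum of its bases and nimbers, which is $0$.
\item If $s=0$, Left moves the stalk $a$ of some blue flower to an option $a^L\geq0$. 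Such an option exists because Left wins $a$ moving first. The flower still dominates its base, so again the result is $\geq0$.
\end{itemize}
In both cases Right, now moving first, loses, and no further induction is needed.

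\textbf{A second flaw, in (1).} Answering a Right stalk move inside $H_i$ can backfire. If $H_i^{RL}\in\mathcal{P}^+$ (e.g.\ $a=\tfrac12$, Right to $1$, Left to $0$), the flower collapses to $*n_i$. You then face $\ell-r\geq1$ with Right to move, which neither (1) nor (2) covers. The fix is not to reply in the stalk at all. For a dyadic stalk $a>0$ every Right option satisfies $a^R>a>0$, so the flower stays blue and (2) applies with Left to move. Likewise, a Right move in a red stalk cannot decrease $\ell-r$.

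This also makes your weakening to stalks in $\mathcal{N}^+$ unnecessary. Sums of flowers $*n:a$ ($n\geq1$) and nimbers are already closed under options, because options of numbers are numbers and no number lies in $\mathcal{N}^+$. With these two repairs your argument becomes a complete, self-contained elementary proof that avoids the atomic-weight calculus the paper relies on.
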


The proof relies on the fact that the atomic weights of a generalized flower are $\pm 1$. Using atomic weights, we can generalize this theorem to all all-small games.
\begin{thm}
\label{2aheadPloy}
    Let $G$ be an all-small game. Then, 
    \begin{enumerate}
    \item  if $\aw(G)\geq 2$, then $G \in \mathcal{L}^+$,
    \item if $\aw(G) \geq 1$, then $G \in \mathcal{L}^+ \cup\mathcal{N}^+$, 
    \item  if $\aw(G)\leq -1$, then $G \in \mathcal{R}^+ \cup\mathcal{N}^+$, 
    \item if $\aw(G)\leq-2$, then ${G}\in\mathcal{R}^+$.
\end{enumerate}
\end{thm}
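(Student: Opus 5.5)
This result is the standard Two-Ahead Rule for atomic weights in normal play; Winning Ways and \cite{conway2000numbers} prove it using the remote star. My plan is to rebuild that argument from the definition of atomic weight and the normal-play comparison tests of the introduction. By symmetry (conjugation negates atomic weight and swaps $\mathcal{L}^+$ and $\mathcal{R}^+$), only parts (1) and (2) need proof.

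The key tool is the characterization of atomic weight through comparison with multiples of $\uparrow$. For an all-small game $G$, choose a far star $*N$, where $N$ exceeds the nim-dimension of every subposition of $G$. Then:
\begin{enumerate}
\item if $\aw(G)\geq 1$, we have $G + *N \geq \uparrow + *N$ up to infinitesimals of higher order;
\item if $\aw(G) \geq 2$, we have $G \geq \uparrow + \uparrow + *$, or more cleanly $G > 0$.
\end{enumerate}
I will take the precise normal-play statements from the theory of atomic weights: if $\aw(G)\geq 1$ then $G \,|\!\!> 0$ (Left wins moving first), and if $\aw(G)\geq 2$ then $G>0$.

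For part (2), I first argue that Left wins $G$ moving first. Suppose instead that $G\leq 0$. Comparing with the far star gives $\aw(G)\leq 0$. This is exactly the contrapositive of the ``Left-incentive'' half of the atomic weight calculus: $G\le 0$ forces $\aw(G)\le 0$. So $G\not\le 0$, hence $G\in\mathcal{L}^+\cup\mathcal{N}^+$.

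For part (1), I additionally need that Right loses moving first. I induct on $G$. Any Right option $G^R$ has $\aw(G^R)\geq \aw(G)-1\geq 1$, because for all-small games a single move changes atomic weight by at most about one. This fact follows from the recursive definition of $\aw$ as the simplest integer fitting between $\aw(G^L)-2$ and $\aw(G^R)+2$, with the far-star exception. By part (2), Left then wins $G^R$ moving first, so Right's move fails and $G\in\mathcal{L}^+$.

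The main obstacle is that the recursive definition of atomic weight has a far-star exception. In that exceptional case the bound $\aw(G^R)\ge \aw(G)-1$ is not a formal consequence, because $\aw(G)$ is then an integer chosen by comparing $G$ with the remote star rather than from the options. I would handle it by citing the standard inequality $\uparrow\cdot \aw(G) - \uparrow \cdot 1 \le G$ (modulo $*N$ comparisons), i.e.\ the known fact that $\aw(G)\ge 2 \Rightarrow G>0$ from \cite{conway2000numbers}. Then I would only check that the paper's use of $\aw$ matches that convention. Theorem~\ref{atomic_weight_og} then follows as the special case where each blue flower has weight $+1$, each red flower $-1$, and nimbers $0$, using additivity of $\aw$.
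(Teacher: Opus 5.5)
The paper gives no argument for this statement; it simply quotes it as Theorem 98 of \cite{conway2000numbers}. Your proposal ultimately falls back on the same citation, and at that level it agrees with the paper. The self-contained reconstruction you build around the citation, however, does not work and should not be presented as a proof. In part (2), the key step ``$G\le 0$ forces $\aw(G)\le 0$'' is only asserted. It is a slightly stronger restatement of what you are proving, and the ``comparison with the far star'' meant to justify it is never carried out. In the exceptional case of part (1) you propose to cite $\aw(G)\geq 2\Rightarrow G>0$, which is part (1) itself. Your auxiliary claim that $\aw(G)\geq 2$ gives $G\geq {\uparrow}+{\uparrow}+*$ is also false: $\aw({\Uparrow})=2$, but ${\Uparrow}-({\Uparrow}+*)=*\not\geq 0$.

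The inductive step of part (1) rests on a false lemma. Atomic weights need not be integers, and the recursion $\aw(G)=\{\aw(G^L)-2\mid \aw(G^R)+2\}$ only guarantees $\aw(G)\not\geq \aw(G^R)+2$, not $\aw(G^R)\geq \aw(G)-1$. For example, $H=\{{\Uparrow}\mid{\Downarrow}\}$ has $\aw(H)=\{0\mid 0\}=*$. Then $G=\{4\cdot{\uparrow}\mid H\}$ has $\aw(G)=\{2\mid 2+*\}=2+{\uparrow}\geq 2$, yet its Right option $H$ has atomic weight $*\not\geq 1$, so part (2) cannot be applied to it. The far-star exception is not the real obstacle: when $\aw(G)\geq 2$ arises there, $\aw(G)\not\geq\aw(G^R)+2$ still holds.

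What $\aw(G)\geq 2$ actually gives is $\aw(G^R)\not\leq 0$ for every $G^R$. To close the induction you therefore need the stronger statement (2$'$): $\aw(K)\not\leq 0\Rightarrow K\not\leq 0$, which is exactly the claim you asserted without proof. It can be proved together with (1) by induction on birthday:
\begin{itemize}
\item In the generic case, $\{\aw(K^L)-2\mid\aw(K^R)+2\}\not\leq 0$ means some Left option $\aw(K^L)-2$ is $\geq 0$. Then $K^L>0$ by (1) for the simpler game $K^L$.
\item If $K>*N$, then $K\leq 0$ would give $0<K+*N\leq *N$, which is impossible.
\item In the remaining remote-star cases, an integer value $\aw(K)\geq 1$ can only arise when some $\aw(K^L)-2\geq 0$.
\end{itemize}
Then (1) follows at once: $\aw(G)\not\leq 0$ gives $G\not\leq 0$, and $\aw(G^R)\not\leq 0$ gives $G^R\not\leq 0$ for every Right option.
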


This is Theorem 98 of \cite{conway2000numbers}.
Theorem \ref{atomic_weight_og} translates exactly to sums of colorful wildflowers and impartial games under mis\`ere play.

\begin{thm}
  \label{atomic_weight}
    Let $G $ be a sum of colored wildflowers and impartial games. Let the number of blue wildflowers be $\ell$, and the number of red wildflowers be $r$. Then 
\begin{enumerate}
    \item  if $\ell-r \geq 2$, then $ G \in \mathcal{L}^-$,
    \item if $\ell-r \geq 1$, then $G \in \mathcal{L}^- \cup\mathcal{N}^-$ ,
    \item  if $\ell-r \leq -1$, then $G \in \mathcal{R}^- \cup\mathcal{N}^-$,
    \item if $\ell-r \leq -2$, then $ {G}\in\mathcal{R}^-$.
\end{enumerate}
\end{thm}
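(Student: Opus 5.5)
The plan is to mirror the normal-play proof of the Blue Flower Ploy (Theorem \ref{atomic_weight_og}). I would prove all four parts simultaneously by induction on the birthday of $G$. By symmetry it is enough to prove (1) and (2). I would split each into the two halves of a win: for (1), Left wins moving second (Right moves first) and Left wins moving first. For (2), I need only that Left wins moving first.

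Throughout, write $G=\sum_i B_i+\sum_j R_j+J$. Here the $B_i\cong G_i:H_i$ are the blue wildflowers, the $R_j\cong G'_j:H'_j$ are the red ones, and $J$ is collects the impartial games and any uncolored wildflowers. For the uncolored ones, Theorem \ref{ord_sum_ord} lets me replace a wildflower $G:H$ with $H\in\mathcal{P}^+$ by its stalk $G$ up to $=^-$. Since $\geq^-$ is compatible with addition, Theorem \ref{ord_sum_ord} also gives
\[
G\ \ge^-\ \sum_i G_i+\sum_j R_j+J ,
\]
so blue flowers can only help Left. This is the tool I will use to convert surplus blue flowers into impartial stalks when convenient.

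First, Right moves first with $\ell-r\ge 2$. There are three kinds of Right move.
\begin{enumerate}
\item A move in a stalk of a blue flower, or in a red flower's $H'_j$ that makes it uncolored, lowers $\ell-r$ by at most one. This leaves Left to move with $\ell-r\ge 1$, which is part (2) by induction.
\item A move in a red stalk, or in $J$, only helps or keeps the count. Left then has a move that restores a position covered by induction, for instance by cutting a further red stalk.
\item A move $H_i\to H_i^R$ inside a blue flower may destroy its color. Here I use the fact that $H_i\in\mathcal{L}^+$, so Left has a normal-play reply $H_i^{RL}\in\mathcal{L}^+\cup\mathcal{P}^+$. She answers inside the same $H_i$. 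If the result is in $\mathcal{L}^+$ the count is restored. If it is in $\mathcal{P}^+$, that flower is $=^-$ its stalk and the count drops by only one on Right's turn. That case again falls under an induction hypothesis of type (1) or (2) after re-examining whose move it is. Because the stalk $G_i$ is nonempty, the flower never disappears through moves in $H_i$.
\end{enumerate}

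Next, Left moves first with $\ell-r\ge 1$. If $r\ge1$, Left moves in the stalk of some red flower $R_j$, chopping it to an impartial option of $G'_j$. This removes a red flower and leaves Right to move with $\ell-r\ge 2$, so she wins by the first half of (1) via induction. The case $r=0$, $\ell\ge1$ is the genuinely misère part and the main obstacle, because no red flower is available to chop. There I plan to argue that $G\ge^- G_1:H_1+J'$, where $J'$ is the impartial game obtained by replacing the other blue flowers with their stalks. Then I show directly that Left wins a single blue flower plus an impartial game moving first in misère play. I would use Conway's misère theory on $J'$: Left either moves in $J'$ or in $G_1$ to a misère $\mathcal{P}$ position of the impartial part, or, if that part is already misère-$\mathcal{P}$, she plays her normal-play winning move in $H_1$. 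Right is then forced eventually to break the stalk, which hands Left the last move in the impartial remainder. Making this endgame airtight is the part I expect to need the most care. It may require using $H_1\in\mathcal{L}^+$ in its strong form, namely that Left wins $H_1$ moving first as well, so she can always pass the burden of moving in the stalk to Right.

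Finally, Left moving first with $\ell-r\ge2$ follows from (2) applied after any harmless move, or from the $r=0$ argument. Parts (3) and (4) follow by swapping colors.
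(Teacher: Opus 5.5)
\textbf{The gap is in your case (3).} It is not a matter of extra care. Since $H_i\in\mathcal{L}^+$, every Right option $H_i^R$ lies in $\mathcal{L}^+\cup\mathcal{N}^+$. The dangerous case is $H_i^R\in\mathcal{N}^+$, because from there Right threatens to move into $\mathcal{R}^+\cup\mathcal{P}^+$.

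Suppose Left's only normal-play winning replies land in $\mathcal{P}^+$. After the exchange, Right is to move with surplus $\ell-r-1$, which equals $1$ when $\ell-r=2$.
\begin{itemize}
\item Part (2) then tells you nothing, since it concerns Left moving first.
\item The ``type (1)'' hypothesis you would need is false at surplus $1$; for example $*2:1=\{0,*,*2\mid 0,*\}\in\mathcal{N}^-$.
\item Chopping the stalk $G_i$ instead gives the same surplus with Right to move.
\item Ignoring the flower lets Right turn it red or neutral.
\end{itemize}
Relatedly, you cannot put uncolored wildflowers into $J$. Theorem~\ref{ord_sum_ord} replaces $G:H$ by its stalk only when $H\in\mathcal{P}^+$, and it says nothing when $H\in\mathcal{N}^+$.

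\textbf{In fact the statement fails as written.} Let $G=*:\{1\mid\{0\mid -1\}\}+*2:1$, a sum of two blue wildflowers. Right moves the first top to $\{0\mid -1\}$, giving $Q=*:\{0\mid -1\}+*2:1$ with Left to move. The relevant outcomes are $*\in\mathcal{P}^-$ and $*:-1=\{0\mid 0,*\}\in\mathcal{R}^-$, while $*2:1$ and $*:\{0\mid -1\}=\{0,*\mid 0,*:-1\}$ are both in $\mathcal{N}^-$. All five of Left's options lose:
\begin{itemize}
\item $*2:1$ and $*:\{0\mid -1\}$ are first-player wins, so Right wins moving first.
\item After $*+*2:1$, Right plays $*2:1\to 0$, leaving $*$.
\item After $*:\{0\mid -1\}+*$, Right chops the first stalk, leaving $*$.
\item After $*:\{0\mid -1\}+*2$, Right plays to $*:-1+*2$. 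Its Left options $*2$, $*:-1$, and $*:-1+*$ are all Right wins with Right to move.
\end{itemize}
Hence $G\notin\mathcal{L}^-$. The position $Q$, one blue flower plus an uncolored one, also refutes part (2) for your enlarged class.

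The paper's proof has the same hole. Its step ``all of Right's moves leave at least one blue wildflower unharmed, and Left wins going first in the resulting position'' ignores that the resulting position may contain an $\mathcal{N}^+$-topped flower.

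\textbf{What survives.} The argument works if tops are drawn from a hereditarily closed family with no $\mathcal{N}^+$ members, for example numbers. Then Right's moves keep blue tops in $\mathcal{L}^+$, and your case (3) disappears. The rest of your outline coincides with the paper's argument and goes through in that setting: chop a red stalk in part (2), and for $r=0$ compare with $G_1:H_1+J'$ via Theorem~\ref{ord_sum_ord}.
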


\begin{proof}
   A sum of blue wildflowers, red wildflowers, and impartial games is of form \begin{equation}
       G \cong \sum_{i = 1}^{\ell} G_i:L_i + \sum_{j = 1}^r G'_j:R_j +N
   \end{equation}
   where $L_i \in \mathcal{L}^+,R_j \in \mathcal{R}^+$ and $G_i,G'_j,N$ are all impartial. The first sum corresponds to the blue wildflowers, the second to red wildflowers, and the third to the impartial games.

Up to symmetry, we only need to prove the first two parts of the Theorem.

 We first prove the $\ell = 1,r=0$ case of the theorem. We need to show that $G \cong G_1:L_1 + N \in \mathcal{L}^- \cup \mathcal{N}^-$, where $L_1 \in \mathcal{L}^+$. If $G_1 + N\in \mathcal{N}^-$, then since $G \geq^- G_1 + N$ by Theorem \ref{ord_sum_ord}, Left wins $G$ going first in mis\'ere play as well. If $G_1 + N\in \mathcal{P}^-$, Left wins $G$ going first by making a move in $L_1$ to $L_1^L$, where $L_1^L \in \mathcal{L}^+\cup \mathcal{P}^+$, since \ref{ord_sum_ord} shows that $G_1:L_1^L + N\geq^-  G_1+ N  \in \mathcal{P}^-$, meaning Right loses moving first in $G_1:L_1^L +N $. Thus, Left wins going first under mis\`ere play, so $G \in \mathcal{L}^- \cup \mathcal{N}^-$. 
 
   Now we consider the case $\ell \geq 2,r=0$. We need to show that $G \cong  \sum_{i = 1}^\ell G_i:L_i + N \in \mathcal{L}^-$ for $\ell \geq 2$, where $L_i \in \mathcal{L}^+$. It is clear that $G \in \mathcal{L}^- \cup \mathcal{N}^-$, since by Theorem \ref{ord_sum_ord}, $G \geq^- G_1:L_1 + \sum_{i=2}^\ell G_i +N \in \mathcal{L}^- \cup \mathcal{N}^-$ by the last paragraph. Thus any sum of blue wildflowers and nimbers is in $\mathcal{L}^- \cup \mathcal{N}^-$. But $G$ has more than one blue wildflower, so all of Right's moves in $G$ leave at least one blue wildflower unharmed, and Left wins going first in the resulting position. Thus Right loses $G$ going first under mis\`ere play, so $G \in \mathcal{L}^-$.

Now we are ready to work on the general case. We will induct on the total number of flowers, $\ell + r$. Say that the initial position $G$ has $\ell$ blue flowers and $r$ red flowers, with $\ell \leq r$. If Left is to move, moving in a red flower makes the difference between the number of blue flowers and red flowers at least $2$, so Left has won by moving to an $\mathcal{L}^-$ position. If Right is to move, the inductive hypothesis shows that he loses if he moves in a red flower or nimber, so he must move in a blue flower and reduce their number by one. Then Left can respond by removing a red flower. Then the difference between the number of blue flowers and red flowers becomes $\ell - 1 - (r -1)$, so we are reduced to the $\ell-1,r-1$ case, which we can assume has already been proven by induction.
\end{proof}
Before finding a set of wildflowers with the evil twin property, we need to prove the final requirement of Theorem \ref{add to a- (a-k)} holds.
\begin{lemma}
    \label{star moves suboptimal}
    Let \begin{equation}
        G  \cong \sum_{i=1}^{\ell} G_i:L_i+\sum_{j=1}^{r} G_j':R_j  +N
    \end{equation}
    where $L_i \in \mathcal{L}^+,R_i \in \mathcal{R}^+$ and $N$ is fickle, be a sum of restricted fickle wildflowers and a fickle game. Then moving from $G+*$ to $G$ is never the unique winning move in either play convention unless $G \cong 0$.
\end{lemma}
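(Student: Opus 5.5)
We argue by cases on the colorful wildflowers, comparing the move $G+*\to G$ with some other move in $G+*$. Throughout, Theorem \ref{atomic_weight} (mis\`ere) and Theorem \ref{atomic_weight_og} / Theorem \ref{2aheadPloy} (normal) control outcomes through the count $\ell-r$. By Left/Right symmetry it suffices to treat Left: we assume that Left wins $G+*$ moving first by moving to $G$ under some convention, and we produce a second winning move.

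\emph{Case $\ell-r\ge 1$.} Suppose first that $\ell\ge 1$. Left moves in a blue wildflower $G_i:L_i$ to a Left option of $L_i$ that lies in $\mathcal{L}^+\cup\mathcal{P}^+$; such an option exists since $L_i\in\mathcal{L}^+$. The resulting game is $\ge^- G+*$ with that flower truncated, by Theorem \ref{ord_sum_ord}. A cleaner choice, which I prefer, uses the fact that $G_i$ is restricted fickle: the Left options of $G_i$ form a star-closed set. If moving to $G$ wins, then $G$ is a second-player win for Left, and the position has $\ell-r\geq1$. I then aim to show that Left replacing $G_i:L_i$ by the appropriate element of $\{G_i^{L},G_i^{L'}\}$ with $G_i^{L'}=G_i^L+*$ gives a position of the same value as $G$, up to $*+*$. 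Under normal play this is equality. Under mis\`ere play, fickle games in sums with fickle $N$ behave like $0$ or $*$, and genus theory makes $*+*$ act trivially. Hence the second move is also winning.

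\emph{Case $\ell-r\le 0$ with some colorful flower.} Here we use the fact that $H\in\mathscr{R}_{\mathcal{G}^+(G_i)}$ and the star-closure of the nim-values $\{*\mathcal{G}^+(H^L+i)\}$ of the $\mathcal{N}^+$ options. A Left move inside $H$ to an $\mathcal{N}^+$ option changes the flower, and its normal value shifts by $*$ in a controllable way. Star-closure supplies an option whose effect on the sum equals that of removing the $*$, so the move inside $H$ replicates the move $G+*\to G$. The options of $H$ that are not in $\mathcal{N}^+$ are handled instead by the ploy theorems, which show that those moves are losing or winning independently of the extra $*$.

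\emph{Case no colorful flowers.} Then $G$ is a sum of fickle impartial pieces and wildflowers $G_i:H_i$ with $H_i\in\mathcal{P}^+$, so $G$ is equivalent to a fickle impartial game. Genus theory applies: $G+*$ has genus $1^0$ or $0^1$. If moving to $G$ wins, then $G$ is a P-position in the relevant convention, and since $G\not\cong0$ there is some other option of $G+*$ with the same genus behaviour. This option is obtained from a fickle component, whose option set is star-closed by the restricted fickle hypothesis, so that component can absorb the $*$.

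The main obstacle is the second case. There one must verify, with the mis\`ere ploy accounting of Theorem \ref{atomic_weight}, that the substitute move inside $H$ really has the same outcome as the move to $G$ under both conventions simultaneously. I would first attempt this by induction on birthday, using the fact that the condition defining $\mathscr{R}_i$ is inherited in the form it is needed.
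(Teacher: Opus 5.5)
There are genuine gaps in all three of your cases.

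\textbf{Case $\ell-r\ge 1$.} The move you prefer does not work. A move in the base $G_i$ destroys the blue flower. The resulting position is $G$ with $G_i:L_i$ replaced by $G_i^L$ or $G_i^{L'}=G_i^L+*$. This has the value of $G$ only if $G_i:L_i$ equals $G_i^L$ or $G_i^L+*$, which is false in general: for $G_i=*$ the candidates are $0$ and $*$, while $*:1=\{0,*\mid 0\}=\mathord{\uparrow}*$. Star-closure of the options of $G_i$ only pairs options of $G_i$ with each other; it never relates them to the flower itself.

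The move you mention first and then drop, $L_i\to L_i^L\in\mathcal{L}^+\cup\mathcal{P}^+$, is the one the paper uses, but it needs a case split you do not make. The paper organizes the proof by which colours are present. With only blue flowers, it uses Theorem~\ref{ord_sum_ord} to reduce to a single flower $G_1:L_1+N+*$, absorbing the other bases into $N$. It then looks at the fickle impartial game $G_1+N+*$. If $G_1+N+*\in\mathcal{P}^-$, the move in $L_1$ wins because $G_1:L_1^L+N+*\ge^- G_1+N+*$. If $G_1+N+*\in\mathcal{N}^-$, Left instead chops the flower by a move in $G_1$, chosen via the genus of $G_1+N+*$. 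Without this dichotomy, neither of your candidate moves is shown to win.

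\textbf{Case $\ell-r\le 0$.} Here the mechanism is wrong, and you leave the case open yourself. The star-closure condition in the definition of $\mathscr{R}_i$ (on nim-values of the $\mathcal{N}^+$-options of $H$) is used in the paper only in the proof of Theorem~\ref{big}, to make the set of moves into the kernel star-closed. It gives nothing here: a move inside $H$ replaces $G_i:H$ by $G_i:H'$, which is not $G_i:H+*$, so it cannot replicate the move $G+*\to G$. What is needed is simpler. If $\ell<r$, the ploy theorems put $G$ in $\mathcal{R}\cup\mathcal{N}$ under both conventions, so moving to $G$ never wins for Left and there is nothing to prove. When both colours are present, the paper's alternative is a move in a red flower.

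\textbf{No flowers.} Here $N$ is only assumed fickle, not restricted fickle, and star-closure is the wrong tool. If the options of $N$ were star-closed and $\mathcal{G}^+(N)=0$, no option could have Grundy value $1$. Moving to $N$ would then be the \emph{unique} normal-play winning move in $N+*$. The second winning move comes from genus theory instead. If $N$ has genus $0^1$, take an option of $N$ with mis\`ere Grundy value $0$. When tame, it has genus $1^0$, since genus $0^0$ would contradict $\mathcal{G}^+(N)=0$. Moving to it in $N+*$ then wins under normal play. The mis\`ere case, with $N$ of genus $1^0$, is symmetric.
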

\begin{proof}

If $\ell>0$ and $r>0$, this is simple, since Theorems \ref{atomic_weight_og} and \ref{atomic_weight} prove that it is best for both sides to move in a wildflower of the other sides color. So assume there are only wildflowers of one color, say blue.
 
If there are no blue wildflowers, we are just playing with sums of fickle games, which is easy under both play conventions.

First suppose there is only one blue wildflower, so that $G+* \cong G_1:L_1 + N+*$ for some restricted fickle $G_1,N$. Say that $G_1 + N+* \in \mathcal{P}^-$. Then Left wins by moving to $G_1:L_1^L + N+*$, where $L_1^L \in \mathcal{L}^+\cup\mathcal{P}^+$. So assume that $G_1 + N +* \in \mathcal{N}^-$. Then Left wins by moving to any $G_1^L + N+*$, since $ 1= \mathcal{G}(G_1 +N+*) \neq \mathcal{G}(G_1^L+N+*)$ and $G_1 + N+*$ is a fickle game. 

If we have more than one blue wildflower, Theorem \ref{ord_sum_ord} shows that $G+* \geq G_1:L_1 + N+*$ for some restricted fickle $G_1,N$, and we can proceed like before
\end{proof}
We also recall the colon principle for normal play impartial games \cite[191]{WinningWays}. 

\begin{thm}[Colon Principle]
    Let $G$ and $H$ be impartial games. Then $\mathcal{G}^+(G:H) = \mathcal{G}^+(G) + \mathcal{G}^+(H)$.
\end{thm}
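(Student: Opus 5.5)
We prove the Colon Principle by induction on the birthday of $H$, with the value of $G$ playing the role of a fixed shift.

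\emph{Reduction.} Write $g = \mathcal{G}^+(G)$ and $h = \mathcal{G}^+(H)$. Here $+$ on Grundy values means nim-addition. The claim to establish is $\mathcal{G}^+(G:H) = g \oplus h$.

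\emph{Reformulation via $\mathcal{P}^+$.} Since $\mathcal{G}^+(X)$ is the unique $n$ with $X + *n \in \mathcal{P}^+$, it is enough to show $G:H + *(g\oplus h) \in \mathcal{P}^+$. Equivalently, $G:H = G:*h = *(g\oplus h)$ in normal play. I would therefore prove the following two statements.

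\textbf{(a)} $G:H = G:*h$. This replaces $H$ by the nimber equal to it.

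\textbf{(b)} $G:*h = *(g \oplus h)$.

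\emph{Proof of (a).} By definition, the options of $G:H$ are the options of $G$ together with $G:H'$ for options $H'$ of $H$. I would show the normal-play sum $G:H + G:*h$ is a second-player win using a mirror strategy.
\begin{itemize}
\item A move in a $G$-part of one summand is answered by the identical move in the other summand, which leaves $G' + G'$.
\item A move from $G:H$ to $G:H'$ is answered as follows. By induction on birthday, $G:H' = G:*\mathcal{G}^+(H')$. If $\mathcal{G}^+(H') < h$, the second player moves $G:*h$ to $G:*\mathcal{G}^+(H')$. If $\mathcal{G}^+(H') > h$, the second player instead moves $H'$ to some $H''$ with $\mathcal{G}^+(H'') = h$; the resulting position $G:H'' + G:*h$ is a $\mathcal{P}^+$-position by induction.
\item A move in the $*h$ part of $G:*h$ to $G:*k$, with $k<h$, is answered by moving $H$ to some $H'$ with $\mathcal{G}^+(H') = k$.
\end{itemize}
In every case the result is $0$ by induction. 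This is the standard mex argument and is routine.

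\emph{Proof of (b).} I would induct on $h$ and compute mex directly. The options of $G:*h$ have the following values:
\begin{itemize}
\item the options $G'$ of $G$, whose values range over a set $S$ with $\mex(S) = g$;
\item the positions $G:*k$ for $k<h$, whose values are $g \oplus k$ by induction.
\end{itemize}
We must show $\mex(S \cup \{g\oplus k : k<h\}) = g\oplus h$.

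\emph{Main obstacle.} This mex step is where I expect the difficulty. The values in $S$ can be larger than $g$, so they might a priori equal $g\oplus h$ or fill unexpected gaps. I would try the following argument first, and I am not confident it is the right framing.
\begin{itemize}
\item \emph{$g\oplus h$ is not attained via the second family.} We have $g \oplus k \ne g\oplus h$ whenever $k \ne h$.
\item \emph{Every value below $g\oplus h$ is attained.} Take $m < g\oplus h$. If $m < g$, it lies in $S$. Otherwise set $k = m\oplus g$; then $k<h$ must hold, which needs checking.
\end{itemize}
The second bullet is in fact false in general. For example, take $g=1$, $h=1$, and $m=2$: then $m < g\oplus h = 0$ fails, so this instance is vacuous, but other instances are not. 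This shows the sum is not nim-addition in general, and ordinal addition $g+h$ is the correct value.

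\emph{Correction.} So I would reinterpret the $+$ in the statement as ordinary integer addition, as in Winning Ways' colon principle for stalks. In that reading the mex is immediate once $S$ causes no interference. The options give $\{0,\dots,g-1\} \subseteq S$ together with $\{g,\dots,g+h-1\}$ from the second family, and $g+h$ must be avoided. The remaining obstacle is showing no option $G'$ of $G$ has value exactly $g+h$. This fails for general forms; for instance, $G$ may have an option of large value. So the statement needs $G$ in a canonical (reduced) form, or the result must be formulated as $G:H = G:*h$ together with a reversibility argument. I would settle this by first proving (a) and then reducing $G$ to its canonical form, checking that the ordinal sum respects that reduction in this case.
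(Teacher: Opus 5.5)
Your part (a) is correct, and it is essentially the whole content of the result the paper cites. The paper gives no proof of its own; it only points to Winning Ways, and the colon principle as usually stated there is the invariance $G:H = G:H'$ whenever $H = H'$.

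The gap is in part (b), and it cannot be closed, because the displayed formula is false for general impartial $G$ under either reading of $+$.
\begin{itemize}
\item \textbf{Nim-addition.} This already fails for $*:* = \{0,*\} = *2$, whose value is $2 \neq 1 \oplus 1$. Your own witness ($g=h=1$, $m=2$) is vacuous, as you noticed, so it does not establish this.
\item \textbf{Ordinary addition.} Let $G$ be the impartial game whose only option is $*2$, so $g = \mex\{2\} = 0$, and take $H = *2$. The options of $G:H$ are $*2$, $G:0 = G$, and $G:*$, where $G:*$ has options $*2$ and $G$ and so has value $\mex\{2,0\} = 1$. The values of the options of $G:H$ are therefore $2, 0, 1$, giving $\mathcal{G}^+(G:H) = 3 \neq 0 + 2$ (and $\neq 0 \oplus 2$).
\end{itemize}

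Your proposed repair is to replace $G$ by its canonical form and check that the ordinal sum respects this. That is exactly the step that fails. $G:H$ depends on the form of $G$, not only on its value, as the paper itself remarks right after defining ordinal sums. In the example above, $G = 0$ in normal play, but $G:*2 = *3$ while $0:*2 = *2$.

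Your mex computation, by induction on $h$, actually proves the following. Let $S$ be the set of Grundy values of the options of $G$, and let $c_0 < c_1 < c_2 < \cdots$ enumerate $\mathbb{N}\setminus S$. Then $\mathcal{G}^+(G:*h) = c_h$. This equals $g+h$ exactly when no option of $G$ has value in $\{g+1,\dots,g+h\}$. A sufficient condition is that every option of $G$ has value less than $g$, as for $G \cong *g$, which is the stalk case of Winning Ways.

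So the honest outcome is this. Your part (a), combined with that induction, is a complete proof of the statement under this extra hypothesis on the form of $G$. Without such a hypothesis no proof exists, since the statement is false.
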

The proof of the main Theorem now comes down to just checking the conditions to apply Theorems \ref{a-k extension} and \ref{add to a- (a-k)}.

\begin{proof}[Proof of Theorem \ref{big}]
Let $\mathscr{A}$ be the set of all impartial restricted tame games, and $ \mathscr{K}$ be the set of all impartial restricted firm games, so that $\mathscr{A} \setminus \mathscr{K}$ is the set of all impartial restricted fickle games.

Let $\mathscr{H}$ be the set of restricted firm wildflowers. Since every restricted firm game has a move to restricted fickle games equal in normal play to $0$ and $*$, the set of moves to restricted fickle games is star closed. Applying Theorem \ref{a-k extension} with $\mathscr{B} = \mathscr{H}$ shows that $(\cl(\mathscr{A} \cup \mathscr{H}),\cl(\mathscr{A} \cup \mathcal{H}) \setminus (\mathscr{A} \setminus \mathscr{K} )$ is evilly normal.

Let $\mathscr{S}$ be the set of restricted fickle wildflowers. Now, we can apply Theorem \ref{add to a- (a-k)} with $\mathscr{B} = \mathscr{S}$. Lemma \ref{star moves suboptimal} shows that for any $G \in \ac (\mathscr{S})$, it is never worthwhile to move from $G+*$ to $G$. The definition of $\mathscr{R}_i$ shows that $ \mathscr{S} \cup \cl(\mathscr{A} \cup \mathscr{H})  $ is hereditarily closed. It remains to prove that the set of moves to the kernel $\mathscr{H}$ is star closed. Let $G:H$ be a restricted fickle wildflower, with $G$ a restricted fickle game and $H \in \mathscr{R}_{\mathcal{G}^+(G)}$. The only moves from $G:H$ to restricted firm positions are those to $G:H'$, where $H' \in \mathcal{N}^+$ , and those to $G'$, a firm option of $G$. 

Let the restricted firm options of $G$ be $G_1',G_2',\dots,G_n'$. Let the set of options of $H$ with outcome class $\mathcal{N}^+$ be $H_1' ,\dots H_m'$. Then the set of restricted firm options of $G:H$ is $\{G_1',\dots,G_n',G:H_1',\dots,G:H_m' \}$. Our goal is to prove that this set is star closed.
Since $G$ is restricted fickle, $\{G_1',G_2',\dots,G_n'\}$ is a star closed set.
And since $H \in \mathscr{R}_{\mathcal{G}^+(G)}$, the set $\{ *(\mathcal{G}^+(H_1')+\mathcal{G}^+(G)), \dots, *(\mathcal{G}^+(H_m')+\mathcal{G}^+(G) )\}$ is star closed by the definition of $\mathscr{R}_{\mathcal{G}^+(G)}$. But by the colon principle, $\mathcal{G}^+(G:H_i') = \mathcal{G}^+(G) + \mathcal{G}(H_i')$, so $G:H_i'$ has the same Grundy value as $*(\mathcal{G}^+(H_1')+\mathcal{G}^+(G))$, and $\{G:H_1', \dots,G:H_m'\}$ is star closed. Since both $\{G:H_1', \dots,G:H_m'\}$ and  $\{G_1,G_2,\dots,G_n\}$ are star closed, their union is as well, proving the last hypothesis of Theorem \ref{add to a- (a-k)}.

Thus $(\cl(\mathscr{S} \cup \mathscr{T}),\cl(\mathscr{S} \cup \mathscr{T}) \setminus \ac(\mathscr{S}))$ is evilly normal.
\end{proof}

Now that we have finally proven Theorem \ref{big}, we can apply it to mutant flowers. Recall the definition of mutant flowers. \begin{defn}
    A mutant flower is a game of form $\{*x_1,\dots,*x_n\}:a$, where the $x_i$ are integers and $a$ is a dyadic rational. The height of $\{*x_1,\dots,*x_n\}:a$ is said to be  $\mex(x_i)$.
\end{defn}
\begin{defn}
    The set of all mutant flowers is denoted $\mathscr{M}$. 
\end{defn}
The set $\mathscr{S} \cup \mathscr{H}$ of Theorem \ref{big} includes many mutant flowers.
 Specifically, $\{*x_1,\dots,*x_n\}:a \in \mathscr{S}$ if and only if $\text{mex}({x_1}) >1$, and $\{*x_1,\dots,*x_n\}:a \in \mathscr{S}$ if $\mex(x_i) \leq 1$ and $\{*x_1,\dots,*x_n\} \setminus \{0,1\}$ is star-closed.
A bit of calculation actually shows that this is the maximal set of mutant flowers with the evil twin property.\begin{thm}
  Let $\text{mex}(\{x_i\})\leq1$. Then
    \begin{equation}
        \{*x_1,\dots ,*x_m\}:1 + \{*x_1,\dotsm*x_m\}:-1 \in  \mathcal{P}^+ \cap \mathcal{N}^-
    \end{equation}
    and \begin{equation}
     \{*x_1,\dots ,*x_m\}:1 + \{*x_1,\dotsm*x_m\}:-1 + *\in  \mathcal{N}^+,
    \end{equation}
    but \begin{equation}
          \{*x_1,\dots ,*x_m\}:1 + \{*x_1,\dotsm*x_m\}:-1+*\in  \begin{cases}
              \mathcal{P}^- & \text{if } \{x_i\} \setminus \{0,1\} \text{ is star-closed}\\
              \mathcal{N}^-& \text{if } \{x_i\} \setminus \{0,1\} \text{ is not star-closed}. \\
          \end{cases}
    \end{equation}
\end{thm}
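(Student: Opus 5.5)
The plan is to handle the two normal-play claims by a negation argument, then do the misère claims by hand, using Theorem \ref{ord_sum_ord}, Theorem \ref{atomic_weight} and misère nim (genus theory). Throughout, write $X=\{*x_1,\dots,*x_m\}$, which is impartial with $\mathcal{G}^+(X)=\mex(x_i)\in\{0,1\}$. Set $G=X:1+X:-1$.

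\textbf{Normal play.} Since $X$ is impartial, $\overline{X:1}\cong X:-1$, so $G=(X:1)-(X:1)=0$ and $G\in\mathcal{P}^+$. Then $G+*=*\in\mathcal{N}^+$.

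\textbf{Misère, $G\in\mathcal{N}^-$.} By the symmetry $G\cong\overline{G}$ it suffices to give Left a winning first move.
\begin{itemize}
\item I would have Left move in $X:-1$ to some $*x_j$, reaching $X:1+*x_j$, a single blue wildflower plus a nimber.
\item By Theorem \ref{ord_sum_ord}, $X:1+*x_j\geq^- X+*x_j$. So it is enough to choose $x_j$ with $X+*x_j\in\mathcal{P}^-\cup\mathcal{L}^-$, which for an impartial game means $\mathcal{P}^-$.
\item The hypothesis $\mex\le 1$ pins down the genus of $X$: it is $0^1$ if $0\notin\{x_i\}$, and $1^0$ if $0\in\{x_i\}$ but $1\notin\{x_i\}$. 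So $X$ is fickle and acts like $*$ or $0$ respectively.
\item I would compute $X+*x_j$ with misère nim for a good choice. For example, taking $x_j=0$ when $X$ acts like $*$ gives $X\in\mathcal{P}^-$. When $X$ acts like $0$, I would look for an option pairing with the appropriate nimber.
\item If no option of this kind works, the fallback is for Left to move $X:1\to X$, reaching $X+X:-1\le^- X+X$. This needs the outcome of $X+X:-1$ for Right moving first, which can be checked by the same tools.
\end{itemize}

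\textbf{Misère, third claim.} Here I split on whether $\{x_i\}\setminus\{0,1\}$ is star-closed.

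If it is star-closed, I would show $G+*\in\mathcal{P}^-$ by a pairing strategy for the second player. Take Right moving first, by symmetry.
\begin{itemize}
\item If Right moves in the red flower's tail, the two flowers become unbalanced. The atomic-weight count of Theorem \ref{atomic_weight} then gives Left the win, after she removes the remaining red flower.
\item If Right moves $X:1\to *x_i$ or $X:-1\to *x_i$ with $x_i\ge 2$, Left answers in the other flower with $*(x_i\oplus 1)$, which exists by star-closedness. This leaves $*x_i+*(x_i\oplus1)+*=^-0$-type positions that are misère $\mathcal{P}^-$ after Left's move. More precisely, the result is a nim position with a heap $\ge2$ and nim-sum $0$, hence $\mathcal{P}^-$.
\item Moves to $*0$ or $*1$ are answered by the complementary small-heap move, or by taking the extra $*$, to keep misère parity.
\item Moves in the lone $*$ are answered by Left reaching $G$. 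She then wins as the first player in the mirror game, or Right is left facing $G$ as first player, which I would need to recheck.
\end{itemize}

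If it is not star-closed, I pick $x_j\ge2$ with $x_j\oplus1\notin\{x_i\}$. The first player moves one flower to $*x_j$, and I would show that the opponent has no reply restoring a $\mathcal{P}^-$ nim position, so $G+*\in\mathcal{N}^-$.

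\textbf{Main obstacle.} The hardest step is the exhaustive case check in the star-closed $\mathcal{P}^-$ proof. In particular, it must handle responses involving $*0$ and $*1$ heaps, where misère nim deviates from normal nim, and the interaction with the extra $*$. I would organize this check by genus ($1^0$, $0^1$, or $n^n$) of the resulting impartial part, using that fickle games behave as $0$ or $*$ and firm games as $*n$.
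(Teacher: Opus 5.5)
For $\mex\{x_i\}=1$, the only case the paper actually argues, your plan is essentially the paper's proof. Normal play goes via $\overline{X:1}\cong X:-1$. The opening $X:-1\to 0$ settles the $\mathcal{N}^-$ claim, and your justification through Theorem~\ref{ord_sum_ord} and $X\in\mathcal{P}^-$ is cleaner than the paper's one-liner. For $G+*$, Theorem~\ref{atomic_weight} forces play into the opposite-coloured flower, and the outcome depends on whether the reply $*(x_i\oplus 1)$ exists, via mis\`ere nim with a heap $\ge 2$. The paper refutes each first move while you exhibit second-player replies, but it is the same case analysis. Your loose ends close at once:
\begin{enumerate}
\item If Right takes the lone $*$, Left is first player in $G\in\mathcal{N}^-$.
\item Any Right move in $X:-1$, and in the non-star-closed case any Right reply in a nimber, leaves a blue flower plus nimbers with Left to move. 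That is a Left win by Theorem~\ref{atomic_weight}(2); there is no red flower left to remove.
\item $X:1\to 0$ must be answered by $X:-1\to 0$. Taking the $*$ instead would leave Right moving first in $X:-1$, which Right wins.
\end{enumerate}
Also, your ``respectively'' is reversed: $1^0$ is the genus of $*$ (a mis\`ere $\mathcal{P}$-position) and $0^1$ is that of $0$.

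The genuine gap is your claim that $\mex\le 1$ makes $X$ fickle. If $\{x_i\}$ is nonempty and contains neither $0$ nor $1$, then $\mathcal{G}^+(X)=\mathcal{G}^-(X)=\mex\{x_i\}=0$. So $X$ has genus $0^0$ and is firm. Then no $x_j$ gives $X+*x_j\in\mathcal{P}^-$, because whoever moves first plays $X\to *x_j$. Your fallback is useless in every case, since $X+X:-1$ with Right to move is a Right win by Theorem~\ref{atomic_weight}(3). No repair is possible, because the statement itself fails there. Take $X=\{*2,*3\}$. Right answers each Left move in $G=X:1+X:-1$ with the same move in the other flower ($*x$ with $*x$, and $X:1\to X$ with $X:-1\to X$). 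This leaves $*x+*x$ or $X+X$, both mis\`ere $\mathcal{P}$-positions. So $G\in\mathcal{P}^-$ rather than $\mathcal{N}^-$, and $G+*\in\mathcal{N}^-$ (take the $*$) even though $\{2,3\}$ is star-closed. The hypothesis must therefore be that $X$ is fickle; for nonempty $\{x_i\}$ this means $\{x_i\}\cap\{0,1\}\neq\varnothing$. Under that hypothesis your argument works: open with $X:-1\to 0$ if $0\in\{x_i\}$, and with $X:-1\to *$ otherwise, since then $X+*\in\mathcal{P}^-$. The paper's own proof hides the same problem behind ``the $\mex=0$ case follows similarly''.
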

\begin{proof}
We will only consider the $\text{mex}(\{x_i\})=1$ since the $\text{mex}(\{x_i\})=0$ case follows similarly.
  
  The games are all symmetric, so we only need to find if Left wins going first. The normal play case is simple, since $\{*x_1\dots,*x_m\}:1 + \{*x_1\dots,*x_m\}:-1 = 0$. So $\{*x_1\dots,*x_m\}:1 + \{*x_1\dots,*x_m\}:-1 = 0 \in \mathcal{P}^+$ and  $\{*x_1\dots,*x_m\}:1 + \{*x_1\dots,*x_m\}:-1 + * = * \in \mathcal{N}^+$. In the mis\`ere play case, Theorem\ref{atomic_weight} shows that, Left going first must move in $\{*x_1\dots,*x_m\}:-1$, followed by Right moving in $\{*x_1\dots,*x_m\}:1$, after which we get a nim position.
  
In mis\`ere play  Left wins $ \{*x_1\dots,*x_m\}:1 + \{*x_1\dots,*x_m\}:-1$ going first by moving to $0$, since Right cannot move to $*$ in $ \{*x_1\dots,*x_n\}:1$.

  Now consider $G =\{*x_1\dots,*x_m\}:1 + \{*x_1\dots,*x_m\}:-1 +*$ in mis\`ere play. Left loses if she moves to $0$, since Right can also move to $0$. So she must move to some $*x_i$, with $x_i$ positive. Right must move to a nimber $*x_j$ from $ \{*x_1\dots,*x_n\}:1$. If $*x_j \cong *(x_i \oplus 1)$, Right wins, and else he loses. So, if $\{*x_1,\dots,*x_m\}$ is star-closed, Right always has a winning move to $*(x_i\oplus 1)$. So Left wins going first, and by symmetry $G \in \mathcal{P}^-$. If $\{*x_1,\dots,*x_m\}$ is not star-closed, then there exists some $*x_i$ for which $*(x_i \oplus 1) \notin \{*x_1 , \dots, *x_m \}$, so Right loses if Left moves to $*x_i$. By symmetry, $G \in \mathcal{N}^-$.
\end{proof}
Thus we have shown the following theorem \ref{main theorem for mutant flowers, minus terminology}, restated below with our new terminology.

\begin{thm}
    Let $\mathscr{M}$ be the set of all sums of mutant flowers. The maximal set of mutant flowers with the evil twin property is $\cl(\mathscr{S} \cup \mathscr{T}) \cap \mathscr{M}$, which has evil kernel $(\cl(\mathscr{S} \cup \mathscr{T}) \setminus \ac(\mathscr{S}) ) \cap \mathscr{M}$.
\end{thm}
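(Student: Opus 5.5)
The statement has two halves, and I would prove them separately. The first half says that $\cl(\mathscr{S}\cup\mathscr{T})\cap\mathscr{M}$ has the evil twin property with the stated kernel. The second half says that no larger set of sums of mutant flowers has the property.

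For the first half I would specialize Theorem \ref{big}, reading $\mathscr{T}$ there as the set $\mathscr{H}$ of restricted firm wildflowers.
\begin{itemize}
\item A mutant flower $\{*x_1,\dots,*x_n\}:a$ with $\mex(\{x_i\})>1$ has a firm base, since its genus is $n^n$ with $n\ge 2$. Every fickle subposition of the base is a nimber $*x_i$ with $x_i\le 1$, or $0$. These are trivially restricted fickle, so the flower is a restricted firm wildflower.
\item When $\mex(\{x_i\})\le 1$ the base is fickle. It is restricted fickle exactly when its option set $\{*x_i\}$ is star-closed among the relevant options, i.e. when $\{x_i\}\setminus\{0,1\}$ is star-closed; the nimbers $0,*$ themselves are harmless.
\item For the top, I would check that $a\in\mathscr{R}_{\mathcal{G}^+(G)}$ for every dyadic $a$. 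A number is never in $\mathcal{N}^+$. Its options are again numbers, so the intersection of its option sets with $\mathcal{N}^+$ is empty and conditions (2) and (3) hold vacuously.
\end{itemize}
Hence every mutant flower in the claimed set lies in $\mathscr{S}\cup\mathscr{H}$. Intersecting the closed set of Theorem \ref{big} with $\mathscr{M}$ keeps the evil twin property with the restricted kernel. Closure under addition and subpositions inside $\mathscr{M}$ is inherited, since subpositions of mutant flowers are mutant flowers or nimbers.

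For maximality I would use the preceding theorem. Suppose a closed set of sums of mutant flowers contains a flower $F=\{*x_1,\dots,*x_m\}:1$ with $\mex\le 1$ and $\{x_i\}\setminus\{0,1\}$ not star-closed. I would first reduce to the case where the set also contains the conjugate $\{*x_i\}:-1$ and $*$; this should follow from the symmetry built into the notion of ``set of mutant flowers'' the theorem quantifies over. Then $X=F+\overline{F}$ lies in the set. The previous theorem gives $o^+(X)=\mathcal{P}$, $o^-(X)=\mathcal{N}$, $o^+(X+*)=\mathcal{N}$, and $o^-(X+*)=\mathcal{N}$.
\begin{itemize}
\item If $X\in\mathscr{K}$ we would need $o^+(X)=o^-(X)$, i.e. $\mathcal{P}=\mathcal{N}$, which fails.
\item If $X\notin\mathscr{K}$ we would need $o^+(X)=o^-(X+*)$, i.e. $\mathcal{P}=\mathcal{N}$, which fails again.
\end{itemize}
So no evil kernel exists. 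When $\{x_i\}\setminus\{0,1\}$ is star-closed, the same theorem gives $o^-(X+*)=\mathcal{P}$, so $X$ must lie outside the kernel. This matches the claimed kernel, since $X\in\ac(\mathscr{S})$.

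The main obstacle is the first bullet of the first half together with the maximality reduction. I must be careful that ``restricted fickle'' for the base $\{*x_i\}$ really coincides with star-closedness of $\{x_i\}\setminus\{0,1\}$. I must also justify that any competing set can be assumed to contain conjugates and $*$; otherwise the counterexample $X$ is unavailable, and I would instead try $F+*$ with a general second flower.
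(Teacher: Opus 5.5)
Your proposal follows the paper's own argument: the positive half is Theorem~\ref{big} intersected with the closed set $\mathscr{M}$, and maximality uses the preceding theorem, where $X=\{*x_i\}:1+\{*x_i\}:-1$ has $o^+(X)=\mathcal{P}$ but $o^-(X)=o^-(X+*)=\mathcal{N}$ when $\{x_i\}\setminus\{0,1\}$ is not star-closed, so $X$ can be placed neither inside nor outside the kernel (the paper also leaves implicit your reduction to competing sets that contain conjugates). One correction you share with the paper: a nonempty base containing neither $0$ nor $*$, such as $\{*2\}$, has genus $0^0$ and is firm (indeed $\{*2\}:1+\{*2\}:-1\in\mathcal{P}^+\cap\mathcal{P}^-$), so your second bullet and the appeal to the preceding theorem should be restricted to bases containing exactly one of $0$ and $*$, which are precisely the fickle ones and the only ones that can fall outside $\mathscr{S}\cup\mathscr{H}$.
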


\section{Hardness of Restricted Wildflowers}
\label{6}

We have proven that a large set of wildflowers have the evil twin property, which tells us how to find mis\`ere-play outcome classes from normal play outcome classes. But it remains to see how hard it is to find the normal play outcome class of a sum of wildflowers. It is shown in \cite{mckay2016misere} that the optimal move under mis\`ere play for Left in a sum of sprigs (games of form $*:a$) under mis\`ere play is to move to $0$ from $*:a$, where $a$ is minimal, and vice versa for Right.

Does Theorem \ref{big} allow us to find mis\`ere play outcome classes of sums of a larger set of wildflowers? The set $\text{cl}(\mathscr{S} \cup \mathscr{T})$ seems too large for a simple strategy to exist. But it seems plausible that there is a fast way to find the outcome classes of sums of mutant flowers in $\cl(\mathscr{S} \cup \mathscr{T})$.

On the contrary, we will show that this problem is $\mathsf{NP\text{-}hard}$. 

\begin{thm}
    \label{hard}
    Determining the outcome class of a sum of $n$ mutant flowers is $\mathsf{NP\text{-}hard}$, even if we restrict to sums of flowers of the form $\{*x_1,\dots,*x_n\}:\pm 1$, where $\pm 1$ denotes \emph{either} $1$ or $-1$.
\end{thm}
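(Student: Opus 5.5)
The plan is to reduce from \textsc{3-Sat}, using Theorem \ref{2aheadPloy} (equivalently Theorem \ref{atomic_weight_og}) to force the course of play in normal play. The game should collapse to a selection problem: Left chooses one nimber from each red flower, the choices are fixed, and the nim-sum of what remains decides the winner.

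\textbf{Step 1 (forcing the play).} Take a sum $G$ of mutant flowers $\{*x_1,\dots,*x_k\}:\pm1$ with exactly as many blue flowers ($:1$) as red flowers ($:-1$). The atomic weight of such a sum is $\ell-r$.
\begin{itemize}
\item Any move in the stalk of a flower helps its opponent. Any move that removes a flower of one's own colour leaves the opponent two ahead, so by Theorem \ref{2aheadPloy} it loses.
\item Hence in optimal play Left always removes a red flower, by moving to some $*x_i$ in its base, and Right answers by removing a blue flower.
\item When all flowers are gone, a nim position $\sum *x_{i_j}$ remains. Its normal-play outcome is decided by whether its nim-sum is zero, together with who is to move. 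The player to move is determined by the parity of the number of flowers, which we control.
\end{itemize}
I will make every blue flower a singleton base $\{*c_k\}:1$, so that Right has no real choices. The question then becomes: can Left choose one element from each red base so that the total nim-sum, including the fixed $c_k$, equals a prescribed target (zero, say)? A few points need care:
\begin{itemize}
\item Left could deviate by moving inside a nimber $*x_i$ she has already created.
\item Either player may move in a base of either colour.
\item Right's answers must be genuinely forced.
\end{itemize}
Each of these should be dismissed by the same "two ahead" argument, though the order of the nim endgame needs checking.

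\textbf{Step 2 (the combinatorial core).} The problem "choose one number from each of given sets so that the XOR is $0$" is NP-hard once sets of size greater than $2$ are allowed. With two options per set it is just linear algebra over $\mathbb{F}_2$, so the nonlinearity must come from larger sets. Encode bit positions as follows:
\begin{itemize}
\item one bit $v_j$ for each variable $x_j$;
\item one bit $o_{k,t}$ for each literal occurrence $t\in\{1,2,3\}$ in clause $C_k$.
\end{itemize}
The flowers are:
\begin{itemize}
\item For each variable, a red flower with base $\{*T_j,*F_j\}$, where $T_j$ has bit $v_j$ together with the occurrence bits of positive occurrences of $x_j$, and $F_j$ has $v_j$ together with the occurrence bits of negative occurrences.
\item For each clause, a red flower whose base consists of the $7$ nimbers given by the nonempty subsets of $\{o_{k,1},o_{k,2},o_{k,3}\}$.
\item Blue singleton flowers, whose $c_k$ cancel all the $v_j$ bits, plus padding flowers if needed to balance the counts and fix the parity of the nim endgame.
\end{itemize}
The total nim-sum is zero exactly when, for every clause, the clause flower cancels precisely the set of true literal occurrences. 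This is possible if and only if that set is nonempty, that is, if and only if the assignment satisfies every clause. All numbers have $O(n+m)$ bits, so the reduction is polynomial. This gives NP-hardness for normal play. Since these sums lie in $\cl(\mathscr{S}\cup\mathscr{H})$, or after suitable adjustment of the bases, Theorem \ref{big} transfers the hardness to misère play via $o^-(G)=o^+(G^*)$. Alternatively one can rerun the Step 1 argument with Theorem \ref{atomic_weight}.

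\textbf{Main obstacle.} I expect the hardest part to be Step 1: proving rigorously that no deviation helps. The danger points are:
\begin{itemize}
\item Left moving to a nimber and later playing inside it.
\item A player using the opponent's base to pick a favourable nimber while removing a flower of the opponent's colour.
\item The endgame parity interacting with which player makes the last flower removal.
\end{itemize}
I would first try an inductive strategy-stealing argument in the style of the proof of Theorem \ref{atomic_weight}. If a deviation cannot be excluded directly, I would then pad all nimbers with a high common bit, so that any premature nim move is immediately punishable.
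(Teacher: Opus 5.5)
\paragraph{Overall.} Your skeleton matches the paper's. You reduce from \textsc{3-Sat}, use Theorem~\ref{2aheadPloy} to force Right to clear blue flowers and Left to clear red ones, and then ask whether Left, \emph{moving second}, can pick one nimber per red base to reach a prescribed nim-sum. The question has to be about Left moving second: moving first, Left always wins by the last-choice trick described below. Your gadget differs from the paper's and is fine, arguably cleaner. The paper uses Tovey's restricted \textsc{3-Sat}, lets $X_i$ offer the subsets of $x_i$'s true occurrences, and sets the target $2^{m+1}-2$. Your clause flowers offering nonempty subsets handle arbitrary \textsc{3-Sat}.

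\paragraph{The gap in Step~1.} Step~1 is not actually carried out, and the one deviation Theorem~\ref{2aheadPloy} cannot exclude is left open.
\begin{itemize}
\item In the intended line Left always moves at atomic weight $-1$, so all her deviations lose by parts (3)--(4).
\item Right moves at weight $0$ after every Left removal. There, his red and stalk moves lose by part (2), but a move \emph{inside an existing nimber} is not covered by the theorem. Such a move flips which player starts the final Nim.
\item The paper closes this with the extra option $*2^{m+i}$ in each red base. Left answers by taking some $X_j$ to $*2^{m+j}$. Right then faces weight $+1$ for the rest of the game, and the final heap contains a bit that nothing can cancel.
\item Your \emph{common high bit} padding does not obviously do this, since Right can shrink a heap while keeping that bit.
\end{itemize}
In your construction a direct argument works, but it must be given:
\begin{itemize}
\item After Right's nim move, Left keeps removing red flowers.
\item Right now always moves at weight $+1$, so by parts (1)--(2) he must keep clearing blue flowers.
\item Left chooses the value in her last red flower so that the final nim-sum is nonzero. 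This is possible because every red base has two distinct values and the last blue base is a singleton. She then starts a nonzero Nim position and wins.
\end{itemize}
With this, both directions of the reduction go through. If $\Pi$ is unsatisfiable, Right, moving first, simply clears blue flowers and wins the final Nim.

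\paragraph{Atomic weight of height-$0$ flowers.} You assert $\aw=\ell-r$, but the paper's atomic-weight lemma assumes positive height, that is, $0$ in the base. None of your bases contain $0$: the $T_j,F_j$ carry $v_j$, clause options are nonempty subsets, and $c_k\neq 0$. The claim is still true but needs its own check:
\begin{itemize}
\item For $0\notin X$, the flower $X:-1=\{*X\mid *X,X\}$ has value $\{*X\mid 0\}$, since Right's options $*x$ reverse through $0$.
\item $\{*X\mid 0\}+*N\in\mathcal{R}^+$ for every $N\notin X$.
\item The eccentric case of the atomic weight calculus therefore gives $-1$; blue flowers are symmetric.
\end{itemize}
Simply adding $0$ to a clause base would let Left skip that clause unless you also add a per-clause marker bit.

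\paragraph{Mis\`ere transfer.} Your mis\`ere remark is unsupported as stated. $\{*T_j,*F_j\}$ has height $0$ and is not star-closed, so by the classification at the end of Section~\ref{5}, Theorem~\ref{big} does not apply to these flowers. The paper's $X_i$ contain $0$ and $*$ and so have height $2$. The paper's proof of this theorem is normal-play only, so this part is not needed, but it should not be claimed without repair.
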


We call the mutant flowers of form $\{*x_1 ,\dots,*x_n\}: \pm 1$ \emph{short mutant flowers}. A related problem was investigated in \cite{burke2024tractability}. These mutant flowers are all superstars according to the definition of \cite{burke2024tractability}.

\begin{defn}
    A superstar is a game whose options are all nimbers.
\end{defn}

In \cite{burke2024tractability} Burke, Ferland, Huntemann, and Teng prove that determining the outcome class of a sum of superstars is $\mathsf{NP\text{-}hard}$ by reducing to a new ruleset \textsc{EPMX}, then proving that \textsc{EPMX} is $\mathsf{NP\text{-}hard}$ by reducing to \textsc{3-Sat}. However, the superstars used in their reduction are not mutant flowers, so we cannot just cite their proof.

Inspired by this reduction, we will reduce from a sum of mutant flowers to 3\textsc{-SAT} directly, without going through \textsc{EPMX}. We use the convention that an instance of 3\textsc{-SAT} has clauses of length $2$ or $3$.

It is shown in \cite{tovey3SAT} that \textsc{3-Sat} is $\mathsf{NP\text{-}complete}$ even if we add the two assumptions that there are an odd number of variables and that every variable $x_i$ appears as a literal in the formula exactly three times: once unnegated as $x_i$, and twice negated as $\neg x_i$. So we will restrict to \textsc{3-Sat} positions with these properties.

Given any \textsc{3-Sat} position $\Pi$, we will find a game $G_{\Pi}$ such that $\Pi$ is satisfiable if and only if $G_{\Pi} \in \mathcal{L }^+ \cup \mathcal{ N}^+$, that is, if Left wins $G_{\Pi}$ moving second under normal play. From now on, all games will be assumed to be played under normal play. The proof that this reduction works will be presented as a series of lemmas.
\begin{ex}
    Throughout the proof, we will use the \textsc{3-SAT} instance $\Omega = (x_1 \vee x_2 ) \land (\neg x_2 \vee \neg x_3) \wedge (\neg x_1 \vee \neg x_3) \wedge ( x_1 \vee\neg x_2 \vee x_3 )$ as an example with the necessary properties. $\Omega$ can be satisfied by assigning $x_1$ to be true, and $x_2$ and $x_3$ to be false.
\end{ex}
\begin{defn}
     Let $\Pi$ be a \textsc{3-SAT} position with the above properties. Then $G_{\Pi}$ is defined as follows. Number the clauses of $\Pi$ from $1$ to $m$. Say that the variable $x_i$ appears unnegated in the clause $r_i$, and negated in $s_i,t_i$, with $s_i < t_i$. 
     
     We will define $n$ red wildflowers $X_i$ corresponding to each variable $x_i$. Each of the letters we define will become an option of $X_i$. Let $a_i = 2^{r_i},b_i = 2^{s_i}+2^{t_i},c=2^{s_i},d=2^{t_i}$. Then let \begin{equation}
X_i = \{0,*,*a_i,*b_i,*c_i,*d_i,*2^{m+i}\}:-1.
\end{equation}
We also define $n$ blue wildflowers. For $1 \leq i \leq n$, we let

\begin{equation}
    Y_i = *:1.
\end{equation}

Finally, we define \begin{equation}
    G_{\Pi} = X_1 + \cdots + X_n + Y_1 + \cdots + Y_n + *(2^{m+1}-2).
\end{equation}
\end{defn}
\begin{ex}
        In $\Omega$, the clause $x_1 \vee x_2$ is numbered $1$, the clause $\neg x_1 \vee \neg x_3$ is numbered $2$, and so on. Then $x_1$ appears only in clause $1$, so $r_1 = 1$. Since $\neg x_1$ appears in clauses $3$ and $4$, $s_1 = 3$ and $t_1 = 4$. We find the other values of $r_i,t_i$ and $s_i$ similarly.

Thus we have that $a_1 = 2^1 = 2,b_1 =  2^3 + 2^4 = 24,c_1 = 2^3 = 8,d_1 = 2^4 = 16,2^{m+i}=32$, so \begin{equation}
    X_1= \{ 0,*,*2,*24,*8,*16,*32 \}:-1.
\end{equation} 
Similarly, $X_2,X_3$ can be found to be \begin{equation}
    X_2 = \{ 0,*,*2,*20,*4,*16,*64\}:-1
\end{equation}
and \begin{equation}
    X_3 =\{ 0,*,*16,*12,*4,*8,*128 \}:-1.
\end{equation}
Since we know $X_1,X_2,X_3$, we can find $G_{\Omega}$.
\end{ex}
\begin{lemma}
    Let $G = \{*x_1,\ldots *x_n\}:1$ be a mutant wildflower of positive height. Then $\aw(G) = 1$. Symmetrically, if $G = \{*x_1,\ldots *x_n\}:-1$, then $\aw(G) = -1$.
\end{lemma}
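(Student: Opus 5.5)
We need the atomic weight of $G=\{*x_1,\dots,*x_n\}:1$ with $\mex(x_i)\ge 1$, so $0$ is among the options. Write $N=\{*x_1,\dots,*x_n\}$. Unwinding the ordinal sum,
\[
G=\{0,\dots,*x_n, N:0 \mid *x_1,\dots,*x_n\},\qquad N:0\cong N .
\]
So Left's options are the nimbers $*x_i$ together with $N$, and Right's options are the nimbers $*x_i$.

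\textbf{Step 1: reduce to canonical form.} Normal-play atomic weight depends only on the value, so I would first pass to canonical form. Every $*x_i$ and $N$ is infinitesimal with atomic weight $0$: each $*x_i$ is a nimber, and $N$ is impartial, hence equal to the nimber $*\mathcal{G}^+(N)$. Since $\mex(x_i)\ge1$, we have $\mathcal{G}^+(N)=h\ge1$, and $N=*h$ is itself a Left option.

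\textbf{Step 2: compute the atomic weight directly.} I would use the atomic weight calculus (Conway's Theorem 98 setup, cited above as Theorem \ref{2aheadPloy}): if $G=\{G^L\mid G^R\}$, then
\[
\aw(G)=\{\aw(G^L)-2\mid \aw(G^R)+2\}
\]
unless this is an integer and $G$ is not eccentric. Here every option has atomic weight $0$, so the candidate is $\{-2\mid 2\}=0$. Since that is an integer, the exceptional clause applies: the weight is determined by comparison with the remote star $\mathord{\uparrow}$-scale (far star).

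\textbf{Step 3 (alternative): compare with $\uparrow$ multiples.} Rather than rely on the eccentric rule, I would verify $\aw(G)=1$ by checking the ploy characterization: $G+\downarrow+*m$ has atomic weight zero for suitable remote $*m$. I expect this to be the cleanest route. Since $G$ is literally $*h:1$ modulo extra nimber options (the options $*x_i$ with $x_i>h$ are reversible or dominated up to the $\mex$), one can show $G = \{0,*,\dots,*(h-1), *h \mid 0,*,\dots,*(h-1)\}$ plus extra nimber options. These extra nimbers do not change the value modulo infinitesimals of lower order, and do not affect the atomic weight. Concretely, I would show
\[
G = (*h:1)+(\text{something of atomic weight }0)
\]
by computing $G-(*h:1)$ and checking that it is an infinitesimal of weight zero. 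The flower $*h:1$ is known to have atomic weight $1$, as used in the Blue Flower Ploy, Theorem \ref{atomic_weight_og}.

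\textbf{Main obstacle.} The main obstacle is the extra options $*x_i$ with $x_i>h$ that are absent from $*h:1$. Because they are not removed simply by domination, I would handle them by showing that in $G-(*h:1)+\downarrow\cdot k$ comparisons with a remote star, a move to a large $*x_i$ can always be answered by a move to $*x_i$ in the other component (if it exists) or by reversing through the $*h$ option. Failing that, I would instead directly check the two inequalities defining $\aw(G)=1$, namely that $G-\uparrow+*m$ is confused with zero in the appropriate sense for large remote $m$, by a strategy-stealing pairing argument. The case $G=\{\dots\}:-1$ follows by conjugation.
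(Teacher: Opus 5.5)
\textbf{There is a genuine gap: the proposal never proves the one comparison the lemma rests on.}

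You abandon Step~2 exactly when the calculus lands in its exceptional (integer) case. Step~3 then relies on the assertion that the extra options $*x_i$ with $x_i>h$ ``do not affect the atomic weight.'' That assertion is essentially the statement to be proved.

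The justification you sketch for it (``reversible or dominated up to the mex'') is only half true. Left's extra options do reverse through $*h$, because $G\ge *h$. Right's extra options do not reverse through any $*k$, since Left wins $G+*k$ moving first for every $k$: she plays $G\to *k$ if $k$ is some $x_i$, $G\to N$ if $k=h$, and $*k\to *h$ otherwise. For example, $\{0,*2\}:1=\{0,*\mid 0,*2\}$ in canonical form, which is not $*:1$. So $G$ really differs from $*h:1$, and rewriting the goal as $\aw(G-(*h:1))=0$ only relocates the problem; you give no argument for it.

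Your fallback criterion (``$G-\uparrow+*m$ is confused with zero'') is also not the right one. The paper uses $*m+\downarrow<G-\uparrow<*m+\uparrow$ for remote $m$, i.e.\ $G+*m>0$ and $\{*x_1,\dots,*x_n\}:-1+\Uparrow+*m>0$, and proves both by explicit strategies.

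\textbf{How to close the gap.} The missing idea is short, and your Step~2 is the most economical way to use it.

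Since every option of $G$ is a nimber, the calculus gives $\{-2\mid 2\}=0$, an integer, so one compares $G$ with a remote star $*m$. When $G>*m$, the atomic weight is the largest integer $x$ with $-2\triangleleft x\triangleleft 2$, namely $1$. So you only need $G+*m>0$.
\begin{itemize}
\item Moving first, Left plays $*m\to *h$. In $G+*h$ she answers $G\to *x_i$ by zeroing the nonzero nim-sum $*x_i+*h$. She answers $*h\to *k$ ($k<h$) by $G\to *k$, which exists because $h=\mex(x_i)$.
\item Moving second, she answers $G\to *x_i$ by $*m\to *x_i$. She answers $*m\to *k$ by $G\to *k$ if $k$ is some $x_i$, by $G\to N$ if $k=h$, and by $*k\to *h$ otherwise, reaching $G+*h$ with Right to move.
\end{itemize}

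Compared with the paper, this needs one comparison instead of two. The trade-off is that it leans on the eccentric clause of the atomic weight calculus, whereas the paper works directly from the defining inequalities. The case $\{*x_1,\dots,*x_n\}:-1$ follows by conjugation.
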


\begin{proof}
 Up to symmetry, we can assume $G = \{*x_1,\ldots *x_n\}:1$. Proving that $\aw(G) = 1$ is equivalent to showing that \begin{equation}
     *N + \mathord{\downarrow}  < (\{*x_1,\ldots *x_n\}:1 )- \uparrow \mathord  <*N +  \uparrow,
 \end{equation}where $N$ is sufficiently large. 
 
 Proving $*N + \mathord{\downarrow} < (\{*x_1,\dots,*x_n\}:1) -\mathord{\uparrow}$ is equivalent to showing $0 <( \{*x_1,\dots,*x_n\}:1)+*N$. If Left goes first in normal play, we claim she wins by moving in $*N$ to $*\mex ({x_i})$. If Right responds by moving in $*\text{mex} ({x_i})$ to $*y $, Left moves in $\{*x_1,\ldots *x_n\}:1$ to $*y$ as well, leaving Right to move in $*y+*y  = 0\in \mathcal{P}^+$. If instead Right moves in $\{*x_1,\ldots *x_n\}:1$ to $*x_i$, then Left wins $*\mex (x_i) + *x_i $, since $*y \notin \{*x_1,\dots,*x_n\}$. Now say Right goes first. If he moves in $\{*x_1,\ldots *x_n\}:1$ to $*x_i$, then Left wins by moving in $*N$ to $*x_i$, leaving Right to move in $*x_i + *x_i = 0 \in \mathcal{P}^+$. Say he moves in $*N$ to $*N'$. If $N' > *\mex(x_i) $, Left wins by moving in $*N'$ to $*\mex(x_i)$ as before. If $N' > \mex (x_i)$, then $*N' \in \{*x_1 ,\dots,*x_n \}$, so Left can move in $\{*x_1,\ldots *x_n\}:1$ to $*N'$, and Right loses $*N' + *N'$.
Thus we have shown that $*N + \mathord{\downarrow} < (\{*x_1,\dots,*x_n\}:1) - \mathord{\uparrow}$.

It remains to show that $ (\{*x_1,\ldots *x_n\}:1 )- \mathord{\uparrow} <*N + \mathord{\uparrow}$, or $0<(\{*x_1,\ldots *x_n\}:-1 ) + \mathord{\Uparrow} +*N$. If Left is first to move, then she wins by moving in $\{*x_1,\ldots *x_n\}:-1 $ to $0$ since the position has atomic weight $2$. If Right moves in $\{*x_1,\ldots *x_n\}:-1$, he loses because $*i + \mathord{\Uparrow} \in \mathcal{L} + *N $ for all $i$. If instead he moves in $\mathord{\uparrow}$ to $*$, the only Right option, then Left moves in $(\{*x_1,\ldots *x_n\}:-1 )$ to $0$, leaving $\mathord{\uparrow} + *+ *N $, which Right loses.  Finally, if he moves in $*N$ to $*N'$, then Left wins by moving in $(\{*x_1,\ldots *x_n\}:-1 )$ to $0$ as before. Thus $(\{*x_1,\dots,*x_n\}:1) - \mathord{\uparrow} < *N + \mathord{\uparrow}$, and $\aw(G) = -1$.
\end{proof}
This implies that the atomic weight of a sum of red and blue mutant wildflowers and nimbers is just the difference between the number of blue and red wildflowers. So we get the following lemma.
\begin{lemma}
    In $G_{\Pi}$, if Left can win, she can win by only moving in the red mutant flowers $X_i$ for as long as there are wildflowers remaining, and if Right can win, he can win by only moving in the blue mutant flowers $Y_i$ for as long as there are wildflowers remaining. 
\end{lemma}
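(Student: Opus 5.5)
We prove the statement by an exchange argument in normal play, using the atomic-weight bookkeeping from the preceding lemma together with Theorem \ref{2aheadPloy}. By symmetry it suffices to treat Left. Write a position reached from $G_{\Pi}$ as a sum of some remaining red flowers $X_i$, some remaining blue flowers $Y_j=*:1$, and a nim heap (the initial $*(2^{m+1}-2)$ plus whatever nimbers have been created by moves into flowers). The only moves that change the atomic weight are the ones that destroy a flower. Destroying a red flower raises $\aw$ by $1$, and destroying a blue flower lowers it by $1$. Every such move leaves a nimber, since all options of $X_i$ and of $Y_j$ are nimbers. Moves in the red stalk $-1$ of some $X_i$, or in the blue stalk of some $Y_j$, are moves to $X_i$ with stalk $0$ or to $Y_j$ with stalk $0$, i.e.\ to impartial games.

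The plan is to show that whenever Left has a winning move in such a position, she also has a winning move inside some red flower $X_i$, as long as some flower remains. The key step is a case split on $\aw$ of the current position $P$. In $G_{\Pi}$, and after each pair of flower-destroying moves, $\aw(P)=\ell-r$ with $\ell=r$ initially.

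\emph{Case 1: a red flower exists.} Suppose Left's winning move is instead a move in a blue flower $Y_j$, in the nim heap, or in the stalk of some $X_i$. That move does not increase $\aw$. After it, Right faces a position of atomic weight $\le \ell - r$. We compare it with the position obtained by Left instead moving some $X_i$ to the nimber that makes the nim-part agree as closely as possible. In particular, she can choose $X_i\to 0$ or $X_i\to *$, and one of these fixes the parity of the nim-sum. The resulting position has atomic weight one larger. We then invoke Theorem \ref{2aheadPloy}, where atomic weight $\ge 2$ forces $\mathcal{L}^+$ and $\ge 1$ gives $\mathcal{L}^+\cup\mathcal{N}^+$, to argue that the red-flower move is at least as good.

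\emph{Case 2: no red flower remains but blue ones do.} Then $\aw\ge 1$. If $\ell\ge 2$, Left wins anyway, and she may move to $*:0$ in a blue flower without loss.

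A cleaner route for Case 1 would be the following: if Left wins by a non-red move, then after Right's reply, which by the symmetric argument we may assume destroys a blue flower, Left is in a position with $\aw$ decreased. By induction on the number of flowers, that position is only winnable for Left through red-flower moves. We then transpose the order of moves, since the moves are in disjoint components.

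The main obstacle is Case 1 when $\ell-r$ is exactly $0$. There Theorem \ref{2aheadPloy} gives no information, and one genuinely has to argue that moving in a stalk or in the nim heap is dominated. I would first try to show directly that a stalk move $X_i\to\{\dots\}:0$, whose value is $*\mex(\dots)$ in normal play, or a move in $Y_j$, reverses out. Right answers by destroying a blue flower, and this restores the atomic-weight balance with Left having spent a tempo. This reduces to the $\ell-1, r$ situation with $\aw=-1$, where Theorem \ref{2aheadPloy} part (3) shows Right at least wins moving first. Making this reversibility precise, by comparing the options via $G\ge H$ in normal play, is where the real work lies.
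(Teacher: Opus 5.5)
Your proposal has a genuine gap. The case you flag as the main obstacle (Left to move at $\aw=0$) is left open, and the tools you offer cannot close it.

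\begin{itemize}
\item Atomic weight only decides outcomes at the thresholds of Theorem \ref{2aheadPloy}. So ``the red-flower move has atomic weight one larger, hence is at least as good'' is not a valid inference. Nothing in that theorem compares a Right-to-move position of atomic weight $1$ with one of atomic weight $0$, and that is exactly the comparison Case~1 needs.
\item The ``cleaner route'' does not help. Moves in disjoint components cannot be reordered once the opponent's intermediate reply depends on the position. Assuming Right's reply destroys a blue flower ``by the symmetric argument'' is circular.
\item The Right half is not available by symmetry, because $G_{\Pi}$ is not symmetric. Each $Y_j=*:1$ has $0$ as its only Right option, while each $X_i$ offers Left $0,*,*a_i,\dots,*2^{m+i}$. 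It is precisely these Left options that decide the balanced case for Right.
\end{itemize}

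The missing idea is to use the move order fixed by the reduction (Right moves first) and argue along the line of play, rather than proving a general domination statement. As long as both players have destroyed flowers of the other color, the atomic weights are pinned down:
\begin{itemize}
\item Left always faces $\aw=-1$. Any non-red move leaves Right to move at $\aw\le-1$, a Right win by part (3) of Theorem \ref{2aheadPloy}.
\item Right always faces $\aw=0$. A red move leaves Left to move at $\aw=1$, a Left win by part (2).
\end{itemize}
The only remaining deviation is a Right move in the nim heap, and the position it creates is exactly your obstacle case.

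There you do not need to show that Left's nim moves are dominated; you exhibit a win. The paper has Left move some $X_j$ to $*2^{m+j}$. After that, any Right move other than taking a blue flower to $0$ leaves Left to move at $\aw\ge 1$. So Right must keep taking blue flowers to $0$ while Left takes red flowers to $0$, and the unique bit $2^{m+j}$ makes the final nimber nonzero with Left to move. Your $0$-versus-$*$ idea would also work here: pick $X_j\to 0$ or $X_j\to *$ so that the impartial part is nonzero. But it works only when combined with this forcing line, not through a direct appeal to Theorem \ref{2aheadPloy}.
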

\begin{proof}
     Say that the players have alternated moves in $G_{\Pi}$, leaving a subposition $G_{\Pi}'$. Assume that both players have been moving in wildflowers of the other's color to get to $G_{\Pi}$. $G_{\Pi}'$ will be a sum of wildflowers and a nimber in normal play.

     Since Left was second to move in $G_{\Pi}$, if Left is next to move in $G_{\Pi}'$, the number of red wildflowers remaining is one more than the number of blue wildflowers, meaning $G_{\Pi}'$ has atomic weight $-1$. If Left moves in $G_{\Pi}'$ in a blue wildflower $*:1$ or a nimber, she moves to a position with atomic weight $\leq -1$. Right is to move in this position, so he wins by Theorem \ref{2aheadPloy}. Thus, Left must move in a red wildflower $X_i$.

If Right is next to move in $G_{\Pi}'$ then the number of red and blue wildflowers remaining is the same, meaning $G_{\Pi}'$ has atomic weight $0$. If Right moves in $G_{\Pi}'$ in a red wildflower $X_i$, Left is left to move in a position with atomic weight $1$, which she wins by Theorem \ref{2aheadPloy}. Say Right moves in one of the nimbers. We claim Left can win by moving in any $X_j$ to $*2^{m+j}$, and taking the remaining $X_i$s to $0$ after. Since Left keeps moving in the red wildflowers $X_i$, the situation is reversed from before and Right must continue moving in the blue wildflowers $*:1$ to $0$. Then eventually Left will be left to move in a position that has no wildflowers, and only a nimber. This nimber cannot be $0$, since Left moved to $*2^{m+i}$, and no other nimber option has a $1$ in the $(m+i+1)^{\text{st}}$ bit, so Left wins. Thus, Right must always move in $*:1$ to $0$, which is his only option.

So both players move in wildflowers of the other's color.
\end{proof}
\begin{lemma}
    Left should never move to one of the big nimbers $*2^{m+i}$ from $Y_i$.
\end{lemma}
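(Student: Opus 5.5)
The statement has a typo: $Y_i=*:1$ has no option $*2^{m+i}$, so I read it as saying that Left should never move in the red flower $X_i$ to its big nimber $*2^{m+i}$. More precisely, I will show that if Left ever makes this move, Right has a winning continuation. The plan is a parity/bit-tracking argument on the nimber that remains once every wildflower is gone. I will rely on the preceding lemma, which says that if either player can win, they can win by moving only in wildflowers of the other colour while any remain.

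\textbf{Step 1: pin down the shape of the play.} Right moves first in $G_{\Pi}$, since we ask whether Left wins moving second. By the preceding lemma I may assume the following. Right always moves in some $Y_j$, and his only option there is $0$. Left always moves in some $X_i$, choosing one of $0,*,*a_i,*b_i,*c_i,*d_i,*2^{m+i}$. There are equally many blue and red flowers and Right moves first, so after all $2n$ flower moves it is Right's turn in a pure nimber. That nimber is
\[
*\Bigl((2^{m+1}-2)\oplus e_1\oplus\cdots\oplus e_n\Bigr),
\]
where $*e_i$ is the nimber Left chose in $X_i$. Under normal play Right wins this final position exactly when the nim-sum is nonzero.

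\textbf{Step 2: the bit argument.} Every nimber $e_j$ that Left could choose from $X_j$ is $0$, $1$, $a_j$, $b_j$, $c_j$, $d_j$ or $2^{m+j}$. The first six are at most $2^{m}+2^{m-1}<2^{m+1}$, since clause indices are at most $m$. The base nimber $2^{m+1}-2$ only has bits $1$ through $m$. The big nimbers $2^{m+j}$ for different $j$ occupy distinct bits $m+j\ge m+1$. Hence if Left chooses $e_i=2^{m+i}$, bit $m+i$ of the final nim-sum is set, and nothing chosen later in any $X_j$ with $j\neq i$ can clear it. So the final nimber is nonzero and Right, who moves next, wins.

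\textbf{Step 3: Right's winning line, and the obstacle.} I must check that Right really can force the play of Step 1 after Left's move to $*2^{m+i}$. Right simply keeps moving each remaining $Y_j$ to $0$, so it is the preceding lemma applied with the roles reversed that must justify this. Any deviation by Left does not help her. If Left moves in a blue flower or in the nimber, the atomic weight becomes at most $-1$ with Right to move, and Right wins by Theorem~\ref{2aheadPloy}. The main obstacle is making this "Right may keep to his colour" reasoning airtight when Left interleaves nimber moves. I would handle it by noting that any such deviation produces a position in $\mathcal{R}^+$ by the atomic-weight count. Once Right is back in the all-flowers-alternating regime, the bit-$(m+i)$ invariant of Step 2 holds regardless, because no later move by either player ever touches that bit. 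That concludes that moving to $*2^{m+i}$ is never part of a winning strategy for Left.
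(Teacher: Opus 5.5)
Your proposal is correct and follows the paper's argument. Both use the preceding lemma to reduce to alternating play in which Right takes each $*:1$ to $0$, then note that $2^{m+i}$ is the only term in the final nim-sum with that bit set, so Right is left to move in a nonzero nimber; you also correctly flag the $Y_i$/$X_i$ typo, which the paper's own proof repeats. Your extra atomic-weight check, that any deviation by Left leaves Right to move at weight $\le -1$, spells out what the paper leaves to the previous lemma. One small slip: a nimber deviation gives $\mathcal{R}^+\cup\mathcal{N}^+$ rather than $\mathcal{R}^+$, but that suffices because Right moves next.
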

\begin{proof}
     Say that Left moves from the $n$ red wildflowers $Y_1,\ldots,Y_n$ to the nimbers $*\ell_1,\ldots,*\ell_n$, respectively. Right must always move from one of the $n$ copies of $*:1$ to $0$. After both players make $n$ moves, Right is left to move in \begin{equation}
    *\ell_1 + \cdots + *\ell_n +*(2^{m+1}-2)= * (\ell_1 \oplus \dots \oplus \ell_n\oplus( 2^{m+1}-2)) = *N.
\end{equation}
Left wins if $*N =0$, and Right wins otherwise. Notice that the order of Left's moves is irrelevant.

If Left moves in $Y_i$ to $*2^{m+i}$, then $\ell_i = *2^{m+i}$. Since none of the other terms in the sum for $*N$ have a $1$ in the $(m+i+1)^{\text{st}}$ bit, $*N \neq 0$, and Left loses. 
So we can assume that $\ell_i \in \{0,a_i,b_i,c_i,d_i\}$.
\end{proof}
\begin{lemma}
    If $\Pi$ is satisfiable, then $G_{\Pi} \in\mathcal{L}^+ \cup \mathcal{N}^+$.
\end{lemma}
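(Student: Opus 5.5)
The plan is to give Left an explicit strategy as the second player in $G_{\Pi}$ under normal play, built from a satisfying assignment. By the lemma on moving only in wildflowers of the opposite colour, we may assume Right always moves in some $Y_j \cong *:1$. Right's only option there is $0$. If Right ever deviates, by moving in an $X_i$ or in the nimber, that lemma's argument already shows Left wins. In that case Left answers in some $X_j$ with $*2^{m+j}$ (or uses the atomic weight argument of Theorem \ref{2aheadPloy}).

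So the play consists of $n$ rounds. In each round Right removes a $Y_j$ and Left answers by moving some untouched $X_i$ to a nimber $*\ell_i$ of her choice. As in the lemma on the big nimbers, after $n$ rounds Right must move in
\[
*\bigl(\ell_1\oplus\cdots\oplus\ell_n\oplus(2^{m+1}-2)\bigr).
\]
Left wins exactly when $\ell_1\oplus\cdots\oplus\ell_n = 2^{m+1}-2 = \sum_{j=1}^m 2^j$. The order of Left's responses is irrelevant. So it suffices to choose, for each $i$, some $\ell_i\in\{0,a_i,b_i,c_i,d_i\}$ such that every clause bit $2^j$ with $1\le j\le m$ occurs an odd number of times among the binary expansions of the $\ell_i$.

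To do this, fix a satisfying assignment. For each clause $j$, pick one literal in clause $j$ that is true under the assignment and call it the witness of clause $j$. Then set the values as follows.
\begin{itemize}
\item If $x_i$ is true, its only true occurrence is the unnegated one in clause $r_i$. Take $\ell_i=a_i=2^{r_i}$ if $x_i$ is the witness of clause $r_i$, and $\ell_i=0$ otherwise.
\item If $x_i$ is false, its true occurrences are $\neg x_i$ in clauses $s_i$ and $t_i$. Take $\ell_i=b_i=2^{s_i}+2^{t_i}$ if it witnesses both clauses, $c_i=2^{s_i}$ if it witnesses only clause $s_i$, $d_i=2^{t_i}$ if it witnesses only clause $t_i$, and $0$ if it witnesses neither.
\end{itemize}
Every option used lies in $X_i$, since $0,*a_i,*b_i,*c_i,*d_i$ are all options there. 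Each bit $2^j$ then appears in exactly one $\ell_i$, namely the one belonging to the witness of clause $j$. Because $s_i<t_i$, there is no carrying within $b_i$. Hence the XOR is exactly $\sum_{j=1}^m 2^j$, and Right faces $0$ and loses.

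The main obstacle is the bookkeeping that every deviation by Right is covered by the preceding lemmas. In particular we need that Right cannot profit by moving in the nimber $*(2^{m+1}-2)$ or in an $X_i$ mid-game; the lemma on opposite-colour moves handles this. The other point to check is that the witness construction yields bits of multiplicity exactly one rather than merely at least one, which is why a single witness per clause is chosen instead of using all true literals. This gives $G_{\Pi}\in\mathcal{L}^+\cup\mathcal{N}^+$, since Left wins moving second.
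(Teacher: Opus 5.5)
Your proposal is correct and follows the paper's proof. Left answers each forced Right move in a $Y_j$ by sending an untouched $X_i$ to $*\ell_i$, where $\ell_i\in\{0,a_i,b_i,c_i,d_i\}$ is read off from the satisfying assignment so that $\ell_1\oplus\cdots\oplus\ell_n = 2^{m+1}-2$, and deviations are handled by the preceding opposite-colour lemma. The only difference is cosmetic: you fix one true witness literal per clause in advance, whereas the paper chooses $\ell_i$ greedily by clearing whichever relevant bits of the running nim-sum are still $1$. That greedy rule amounts to taking the first true literal in processing order as the witness, and your version makes the ``each clause bit is toggled exactly once'' count explicit.
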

\begin{proof}
    Up to reordering the variables $x_1,\dots,x_n$ of $\Pi$, we can assume that $\Pi$ can be satisfied by assigning the variables $x_1,\dots,x_k$ to be true, and $x_{k+1},\ldots,x_n$ to be false.  Because the order of Left's moves doesn't matter, we will have Left's $i^\text{th}$ move be in $X_i$. Recall that the variable $x_i$ appears in $r_i$ unnegated, and in $s_i$ and $t_i$ negated. After Left has made $i$ moves, note that $G_{\Pi}$ is equal to a sum of wildflowers and the nimber $*N_i = *(\ell_1 \oplus \cdots \oplus \ell_i \oplus 2^{m+1}-2)$. Notice that the binary expansion of $*N_0= *(2^{m+1}-2)$ is $m$ bits of $1$, followed by a single $0$. We need to prove that Left's moves $*\ell_1,\dots,*\ell_n$ can be chosen such that $*N_n = *N = * (\ell_1 \oplus \dots \oplus \ell_n \oplus ( 2^{m+1}-2) )= 0$. Equivalently, we need $\ell_1 \oplus \dots \oplus \ell_n = 0$.

    For $1 \leq i \leq k$, let the $i^\text{th}$ move of Left, $*\ell_i$ be to $*a_i$ if the $(r_i+1)^{\text{th}}$ bit of $*N_i$ is a $1$, and $0$ if otherwise.
For $k < i \leq n$, let the $i^\text{th}$ move of Left to be to $*b_i$ if the $(s_i+1)^{\text{th}}$ and $(t_i+1)^{\text{th}}$ bits of $*N_i$  are $1$, $*c_i$ if the $(s_i+1)^\text{th}$ bit of $*N_i$ is $1$ and the $(r_i+1)^{\text{th}}$ is $0$, $*d_i$ if the $(s_i+1)^\text{th}$ bit of $*N_i$ is $0$ and the $(r_i+1)^{\text{th}}$ is $1$, and $0$ if the $(s_i+1)^{\text{th}}$ and $(t_i+1)^{\text{th}}$ bits of $*N_i$  are $0$. 
Notice that if a bit of $N_i$ is $0$ at some point, the bit cannot become a $1$ again in $N_j $ for $N_j,j>i$. Bits are switched to $0$ by the move to $l_i$ if the assignment of $x_i$ satisfies the corresponding clause of $\Pi$. Because all the clauses of $\Pi$ are satisfied when all the variables have been assigned, the $2^{\text{nd}}$ to $(m+1)^{\text{th}}$ bit all become $0$. These are exactly the bits that are $1$ in $ *2^{m+1}-2$, so $*N_n = 0$. Thus, Left wins $G_{\Pi}$ going second. 
\end{proof}
\begin{ex}
    The formula $\Omega$ is $(x_1 \lor x_2 ) \land (\neg x_2 \lor \neg x_3) \land (\neg x_1 \lor \neg x_3) \land (\neg x_1 \lor \neg x_2 \lor x_3 )$. Since $\Omega$ can be satisfied by assigning $x_1$ to be true, and $x_2$ and $x_3$ to be false, we have $k = 1$. 

First we find $\ell_1$. We have $*N_0 = 30$. $x_1$ in $\Omega$ is true, so we are deciding between $\ell_1 = a_1$ and $\ell_1 = 0$.  Since $r_1 = 1$, and the second bit of $N_0$ is a $1$, we let $\ell_1 = a_1 = 2$. So $*N_1 = *(2 \oplus 30) = *28 $. Now we find $\ell_2$, which can be any of $0,b_2,c_2,d_i$ since $x_2$ is assigned to false in $\Omega$. Since $r_2 = 2$ and $s_2 = 4$, and the third and fifth bits of $N_1$ are both $1$, we should choose $\ell_2 = b_2 = 20$. So $N_2 = 2 \oplus 20\oplus30=8$. Now we find $\ell_3$. We see that $r_3 = 2$ and $s_3 = 3$. Since the second bit is zero and the third is one, we choose $\ell_3 = d_2 = 8$. 
    
      Finally, $*N = *N_3 =*(2 \oplus 20 \oplus 8 \oplus 30) = 0 $, so Left wins $G_{\Omega}$ by moving from $X_i$ to $*\ell_i$.
\end{ex}
\begin{lemma}
    If $G_{\Pi} \in\mathcal{L}^+ \cup \mathcal{N}^+$, then $\Pi$ is satisfiable.
\end{lemma}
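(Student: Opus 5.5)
The plan is to take a winning strategy for Left in $G_{\Pi}$ and read a satisfying assignment off the nimbers she chooses. First, normalize the play using the preceding lemmas. Since $G_{\Pi}\in\mathcal{L}^+\cup\mathcal{N}^+$, Left has a winning strategy as the first player. By the lemma on moving in wildflowers of the other colour (a consequence of Theorem \ref{2aheadPloy} and the atomic weight computation), we may assume Left only ever moves in the red flowers $X_i$, from $X_i$ to some nimber $*\ell_i$, while wildflowers remain. Right's replies are always the forced moves $*:1\to 0$ in the $Y_i$.

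Second, I would pin down the winning condition on $\ell_1\oplus\cdots\oplus\ell_n$. Once all $2n$ flower moves are made, the position is the single nimber $*N$ with $N=\ell_1\oplus\cdots\oplus\ell_n\oplus(2^{m+1}-2)$. Here I would follow the bookkeeping of the preceding lemmas: the final nimber is faced by Right, so Left's strategy wins exactly when $N=0$. I expect this to be the main obstacle, because the parity of who faces $*N$ must be checked carefully against ``Left moves first.'' If that parity comes out the other way, I would instead analyse the line in which Right deviates by moving in the nimber $*(2^{m+1}-2)$. The aim is to show that Left can only survive that line by reaching $N=0$ with Right to move. In either case the target statement is that Left's winning play yields $\ell_1\oplus\cdots\oplus\ell_n=2^{m+1}-2$.

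Third, by the lemma that Left never moves to a big nimber $*2^{m+i}$ (otherwise bit $m+i+1$ of $N$ cannot cancel), each $\ell_i\in\{0,a_i,b_i,c_i,d_i\}$.

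Finally, decode the assignment: set $x_i$ true iff $\ell_i=a_i$, and false otherwise. Fix a clause $j$. Bit $2^j$ is set in $2^{m+1}-2$, so some $\ell_i$ must contain $2^j$. If that $\ell_i=a_i=2^{r_i}$, then $r_i=j$ and $x_i$ is true, so the unnegated occurrence of $x_i$ satisfies clause $j$. If instead $\ell_i\in\{b_i,c_i,d_i\}$, then $j\in\{s_i,t_i\}$ and $x_i$ is false, so the occurrence $\neg x_i$ in clause $j$ satisfies it. Hence every clause is satisfied and $\Pi$ is satisfiable.
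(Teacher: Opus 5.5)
There is a genuine gap, and it sits in your second step, where you flagged the parity. Read literally, $G_\Pi\in\mathcal{L}^+\cup\mathcal{N}^+$ says Left wins moving \emph{first}, and then the parity goes against you.

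If both players move in flowers of the other colour, Left makes flower moves $1,3,\dots,2n-1$ and Right makes moves $2,4,\dots,2n$. So it is Left who faces $*N$, and she wins iff $N\neq 0$. She can always arrange this: $0$ and $*$ are both Left options of every $X_i$, so her last choice can make $N$ nonzero.

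Your fallback cannot rescue this. After Left's first move into a red flower the atomic weight is $+1$ with Right to move. A Right move in the nimber or in a red flower leaves Left to move at atomic weight $\ge 1$, which she wins by Theorem~\ref{2aheadPloy}. So no line forces $N=0$. In fact this shows $G_\Pi\in\mathcal{L}^+\cup\mathcal{N}^+$ for \emph{every} $\Pi$, satisfiable or not. The statement as printed is therefore false, and no argument from that hypothesis can succeed.

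The paper's proof actually begins ``Assume that Left wins $G_\Pi$ going second'', matching the earlier phrase ``that is, if Left wins $G_\Pi$ moving second''. The intended hypothesis is $\mathcal{P}^+$ in place of $\mathcal{N}^+$.

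Under that hypothesis your second step is easy. Right opens, and you fix Right's line of always playing some $Y_j\to 0$ while blue flowers remain.
\begin{itemize}
\item After each such move the atomic weight is $-1$ with Left to move.
\item A Left reply in some $Y_j$ or in the nimber leaves Right to move at atomic weight $\le -1$, which Right wins by Theorem~\ref{2aheadPloy}. So Left's winning strategy must answer in the $X_i$, whose Left options are only nimbers.
\item After $2n$ moves Right faces $*N$, so Left's win forces $N=0$, i.e.\ $\ell_1\oplus\cdots\oplus\ell_n=2^{m+1}-2$.
\end{itemize}
From there your third and fourth steps are exactly the paper's decoding.

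One small correction: $*$ is also a Left option of each $X_i$, so $\ell_i=1$ is possible, not just $\ell_i\in\{0,a_i,b_i,c_i,d_i\}$. It carries no clause bit, so setting $x_i$ false in that case leaves your argument unchanged.
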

\begin{proof}
    Assume that Left wins $G_{\Pi}$ going second in normal play. We will prove that $\Pi$ is satisfiable. Say that Left won by making the moves $*\ell_1,\ldots,*\ell_n$ from the mutant flowers $X_1,\dots,X_n$, respectively. Then we need $\ell_1\oplus \cdots \oplus \ell_n= 2^{m+1}-2$. 
We claim that the following procedure gives an assignment of truth values to $x_1,\dots,x_n$ that satisfies the formula $\Pi$: If $\ell_i = a_i$, let $x_i$ be true; else, assign $x_i$ to be false. 
Notice that the $j^{\text{th}}$ clause is satisfied in $\Pi$ by the assignment of a truth value to $x_i$ if the $(j+1)^{th}$ bit of $\ell_i$ is a $1$. 

Since $\ell_1\oplus \cdots \oplus \ell_n= 2^{m+1}-2$, for each bit from the $2^{\text{nd}}$ to the $(m+1)^{\text{th}}$, there exists an $\ell_i$ with a $1$ in that bit. Thus all the clauses of $\Pi$ are satisfied.
\end{proof}
\begin{ex}
In $\Omega= (x_1 \lor x_2 ) \land (\neg x_2 \lor \neg x_3) \land (\neg x_1 \lor \neg x_3) \land (\neg x_1 \lor \neg x_2 \lor x_3 )$, Left can win by moving to $*\ell_1 = 2,*\ell_2 = 20,*\ell_3 = *8$. This leads to the assignment of $x_1$ to true and $x_2$ and $x_3$ to false, since $*a_1 = 2$.
\end{ex}
Combining the previous two lemmas proves Theorem \ref{hard} by reducing from a 3-\textsc{SAT} instance $\Pi$ to a sum of superstars $G_{\Pi}$.

\section{Further Questions}  
\label{7}
We still know next to nothing about mis\`ere games compared to normal play games. Here are some of the questions related to this paper.
\begin{enumerate}
    \item Can genus theory be generalized to some set of partizan mis\`ere play games?
    \item Is Conjecture \ref{all_norm} true?
    \item   Other than restricted wildflowers, what interesting sets of games have the evil twin property? Are there any such sets that are not equivalent to wildflowers?
    \item  Theorem \ref{a-k extension} allows us to add elements to $\mathscr{K}$ and Theorem \ref{add to a- (a-k)} allows us to add elements to $\mathscr{A}\setminus\mathscr{K}$. What other ways are there to extend $(\mathscr{A},\mathscr{K})$?
    \item Let $(\mathscr{A},\mathscr{K})$ be evilly normal. Say that $\mathscr{A} \in \mathcal{D}$, the set of dicotic games. The games invertible modulo $\mathcal{D}$ are characterized by Fisher, Nowakowski, and Santos in \cite{dicoticinvertible}. Let $G \in \mathscr{A}$ be invertible in $\mathcal{D}$. Is it necessarily true that $G \in \mathscr{A} \setminus \mathscr{K}$?
    \item   Is finding the outcome class of a sum of mutant flowers with the evil twin property $\mathsf{PSPACE\text{-}complete}$? How hard is finding the outcome class of \emph{generalized flowers} of form $*n:a$?
\end{enumerate}
\printbibliography 

@article{dicoticinvertible,
 author = {Fisher, Michael and Nowakowski, Richard J. and Pereira dos Santos, Carlos},
 title = {Invertible elements of the dicot mis{\`e}re universe},
 fjournal = {Integers},
 journal = {Integers},
 issn = {1553-1732},
 volume = {22},
 pages = {paper g6, 11},
 year = {2022},
 language = {English},
 url = {math.colgate.edu/~integers/wg6/wg6.pdf},
 zbMATH = {7633013},
 Zbl = {1507.91034}
}

@book{conway2000numbers,
  author = {Conway, J. H.},
 title = {On numbers and games.},
 edition = {2nd ed.},
 isbn = {1-56881-127-6},
 year = {2001},
 publisher = {Natick, MA: A K Peters},
 language = {English},
 zbMATH = {1568491},
 Zbl = {0972.11002}
}

@article{plambeck2008misere,
  author = {Plambeck, Thane E. and Siegel, Aaron N.},
 title = {Mis{\`e}re quotients for impartial games},
 fjournal = {Journal of Combinatorial Theory. Series A},
 journal = {J. Comb. Theory, Ser. A},
 issn = {0097-3165},
 volume = {115},
 number = {4},
 pages = {593--622},
 year = {2008},
 language = {English},
 doi = {10.1016/j.jcta.2007.07.008},
 zbMATH = {5274181},
 Zbl = {1142.91022}
}

@misc{burke2024tractability,
  author = {Burke, Kyle and Ferland, Matthew and Huntemann, Svenja and Teng, Shanghua},
 title = {A {Tractability} {Gap} {Beyond} {Nim}-{Sums}: {It}'s {Hard} to {Tell} {Whether} a {Bunch} of {Superstars} {Are} {Losers}},
 year = {2024},
 howpublished = {Preprint, {arXiv}:2403.04955 [cs.{CC}] (2024)},
 url = {https://arxiv.org/abs/2403.04955},
 arXiv = {arXiv:2403.04955}

}

@article{mckay2016misere,
  author = {McKay, Neil A. and Milley, Rebecca and Nowakowski, Richard J.},
 title = {Mis{\`e}re-play {Hackenbush} sprigs},
 fjournal = {International Journal of Game Theory},
 journal = {Int. J. Game Theory},
 issn = {0020-7276},
 volume = {45},
 number = {3},
 pages = {731--742},
 year = {2016},
 language = {English},
 doi = {10.1007/s00182-015-0484-8},
 zbMATH = {6641454},
 Zbl = {1388.91079}
}

@incollection{allen2015peeking,
    author = {Allen, Meghan R.},
 title = {Peeking at partizan mis{\`e}re quotients},
 booktitle = {Games of no chance 4. Papers of the BIRS workshop on combinatorial game theory, Banff, Canada, January 20--25, 2008},
 isbn = {978-1-107-01103-8},
 pages = {1--12},
 year = {2015},
 publisher = {Cambridge: Cambridge University Press},
 language = {English},
 zbMATH = {6490897},
 Zbl = {1380.91037}
}

@misc{lomisere,
       author = {Lo, Irene Y.},
 title = {Misere {Hackenbush} {Flowers}},
 year = {2012},
 howpublished = {Preprint, {arXiv}:1212.5937 [math.{CO}] (2012)},
 url = {https://arxiv.org/abs/1212.5937},
 arXiv = {arXiv:1212.5937}
}

@article{ottoway_zero,
 author = {Mesdal, G. A. and Ottaway, P.},
 title = {Simplification of partizan games in mis{\`e}re play},
 fjournal = {Integers},
 journal = {Integers},
 issn = {1553-1732},
 volume = {7},
 number = {1},
 pages = {paper g06, 12},
 year = {2007},
 language = {English},
 url = {https://eudml.org/doc/130369},
 zbMATH = {5171562},
 Zbl = {1165.91008}
}

@book{WinningWays,
 author = {Berlekamp, Elwyn R. and Conway, John H. and Guy, Richard K.},
 title = {Winning ways for your mathematical plays. {Vol}. 2.},
 edition = {2nd ed.},
 isbn = {1-56881-142-X},
 year = {2003},
 publisher = {Natick, MA: A K Peters},
 language = {English},
 zbMATH = {1889828},
 Zbl = {1011.00009}
}

@article{tovey3SAT,
 author = {Tovey, Craig A.},
 title = {A simplified {NP}-complete satisfiability problem},
 fjournal = {Discrete Applied Mathematics},
 journal = {Discrete Appl. Math.},
 issn = {0166-218X},
 volume = {8},
 pages = {85--89},
 year = {1984},
 language = {English},
 doi = {10.1016/0166-218X(84)90081-7},
 zbMATH = {3848607},
 Zbl = {0534.68028}
}

\end{document}